\documentclass[a4paper,12pt]{article}

\usepackage{amsmath}
\usepackage{amsthm}
\usepackage{enumerate}
\usepackage{amssymb,amsfonts,latexsym,mathtools}
\usepackage{makecell}
\usepackage{mathrsfs}
\usepackage{bm}
\usepackage{cases}
\usepackage{color}

\newcommand{\Levy}{L\'{e}vy}

\newcommand{\F}{\mathscr{F}}

\newcommand{\Z}{\mathbb{Z}}
\newcommand{\Q}{\mathbb{Q}}

\renewcommand{\P}{\mathbb{P}}
\usepackage{multirow}
\newcommand{\E}{\mathbb{E}}
\usepackage{url}
\usepackage{tikz}
\usetikzlibrary{
  arrows.meta,
  calc,
  decorations.pathmorphing,
  positioning
}

\numberwithin{equation}{section}

\makeatletter
\renewcommand\section{\@startsection {section}{1}{\z@}%
{-3.5ex \@plus -1ex \@minus -.2ex}%
{2.3ex \@plus.2ex}%
{\normalfont\large\bf}}
\makeatother

\makeatletter
\renewcommand\subsection{\@startsection {subsection}{1}{\z@}%
{-3.5ex \@plus -1ex \@minus -.2ex}%
{2.3ex \@plus.2ex}%
{\normalfont\normalsize\bf}}
\makeatother

\theoremstyle{plain}
\newtheorem{thm}{Theorem}[section]
\newtheorem{lem}[thm]{Lemma}
\newtheorem{cor}[thm]{Corollary}
\newtheorem{prop}[thm]{Proposition}
\theoremstyle{definition}

\allowdisplaybreaks

\usepackage{hyperref}
\hypersetup{
setpagesize=false,
 bookmarksnumbered=true,
 bookmarksopen=true,
 colorlinks=true,
 linkcolor=blue,
 citecolor=red,
}

\begin{document}
\begin{center}
\Large \textbf{Penalizations of Two-Dimensional Simple Random Walk by Direction-Dependent Excursion Counts and Edge-Crossing Counts}
\end{center}
\begin{center}
Syota Esaki\footnote{
\begin{tabular}[t]{@{}l@{}}
Affiliation: Faculty of Science and Technology, Tokyo University of Science.\\
E-mail: \url{sesaki@rs.tus.ac.jp}
\end{tabular}
} and Kohki Iba\footnote{
\begin{tabular}[t]{@{}l@{}}
Affiliation: Graduate School of Science, The University of Osaka.\\
E-mail: \url{kohki.iba@gmail.com}
\end{tabular}
}
\end{center}
\begin{abstract}
We study penalizations of the two-dimensional {simple random walk} by functions of direction-dependent excursion counts and edge-crossing counts. The former record completed excursions with prescribed pairs of departure and return directions at the origin, whereas the latter record crossings of a prescribed set of unoriented edges incident to the origin. We determine the distributions of their almost surely finite terminal values and derive explicit path decomposition formulas involving the random walk conditioned to avoid the origin. Moreover, both families of penalized measures can be represented by weighting and normalizing the same universal $\sigma$-finite measure associated with local-time penalization at the origin. Thus, direction-dependent excursion counts, edge-crossing counts, and local time at the origin give rise to penalizations belonging to a common universality class.
\end{abstract}
\medskip
\noindent\textbf{MSC 2020:}
Primary 60G50; Secondary 60J10, 60J55.

\smallskip
\noindent\textbf{Keywords:}
Two-dimensional simple random walk, penalization, local time, excursion, h-transform, path decomposition.


\section{Introduction}
\subsection{Penalization Problem}
The \emph{penalization problem} for a stochastic process concerns reweighting the original probability measure by a nonnegative path-dependent functional and studying both the limiting measure obtained as the observation time tends to infinity and the behavior of the process under this limiting measure.

More precisely, let $X=(X_t)_{t\in\mathbb{T}}$ be a Markov process with state space $\E$ and time set $\mathbb{T}$, and let $\mathbb{F}=(\F_t)_{t\in\mathbb{T}}$ denote its natural filtration. Whenever necessary, we assume that $\mathbb{F}$ is right-continuous. For $x\in\E$, let $\P_x$ denote the law of $X$ started from $x$, and let $\P_x[\cdot]$ denote expectation with respect to this measure. Given a nonnegative $(\F_t)$-adapted process $\Gamma=(\Gamma_t)_{t\in\mathbb{T}}$, the penalization problem with weight $\Gamma$ consists of studying the limit
\begin{align}
\label{penalization-limit}
\lim_{t\to \infty}\frac{\P_x[1_{\Lambda_s}\cdot \Gamma_t]}{\P_x[\Gamma_t]}
\qquad \text{for } s\in \mathbb{T},\ \Lambda_s\in \F_s.
\end{align}
When this limit is represented by a probability measure on the path space, we call this measure the \emph{penalized measure} and the process under this measure the \emph{penalized process}. Depending on the choice of the weight $\Gamma$, the resulting penalized process admits various probabilistic interpretations.

For example, suppose that the local time $L^z=(L_t^z)_{t\in\mathbb{T}}$ at some point $z\in\E$ is well defined. One may then consider the weight $\Gamma_t=e^{-L_t^z}.$ Since this weight decreases as the local time increases, paths accumulating a large amount of local time at $z$ are assigned smaller weights. Thus, heuristically, the corresponding penalized process can be interpreted as a process whose visits to $z$ are suppressed. As another example, when $X$ is a real-valued process, one may consider the weight $\Gamma_t=e^{-\overline{X}_t},\ \overline{X}_t:=\sup_{u\le t}X_u.$ In this case, paths attaining a large running supremum are assigned smaller weights. Hence, heuristically, the corresponding penalized process can be interpreted as a process for which reaching high levels is suppressed.

\emph{Conditioning problems} can be viewed as a special class of penalization problems. Indeed, let $T_A$ be the hitting time of a set $A\subset\E$, and set $\Gamma_t=1_{\{t<T_A\}}.$ Then the ratio appearing in (\ref{penalization-limit}) is given by $\P_x(\Lambda_s\mid t<T_A)$. Consequently, the process obtained by taking the limit as $t\to\infty$ may be interpreted as the process conditioned to avoid the set $A$.

Penalization problems for one-dimensional Brownian motion have been extensively studied for various weights involving, among other quantities, local times and the running supremum; see Roynette--Vallois--Yor \cite{Roynette-Vallois-Yor-2,Roynette-Vallois-Yor-1,RVY-excursion,RVY-Last}, Najnudel \cite{Najnudel}, and Roynette--Yor \cite{Roynette-Yor-tenpenal,Roynette-Yor-Additive}. Subsequently, the study of penalization problems was extended to a wide variety of stochastic processes. Roynette--Vallois--Yor \cite{Roynette-Vallois-Yor-Bessel} studied Bessel processes; Salminen--Vallois \cite{Salminen-Vallois}, Profeta \cite{Profeta}, and Profeta--Yano--Yano \cite{Profeta-Yano-Yano}, one-dimensional diffusion processes; Yano--Yano--Yor \cite{Yano-Yano-Yor, Yano-Yano-Yor-sup}, one-dimensional stable processes; Abraham--Debs \cite{Abraham-Debs} and Abraham--Boulal--Debs \cite{Abraham-Boulal-Debs}, Galton--Watson processes; Takeda--Yano \cite{Takeda-Yano}, Iba--Yano \cite{Iba-Yano}, Iba \cite{Iba3,Iba-Kackilling}, and Takeda \cite{Takeda-sup}, one-dimensional \Levy\ processes; and Roynette--Vallois--Yor \cite{RVY-d-Brown} and Najnudel--Tansrivorarat \cite{Najnudel-Tansrivorarat}, multidimensional Brownian motion. Conditioning problems have also been studied extensively, although we do not attempt to review that literature here. The two-dimensional {simple random walk} conditioned to avoid the origin, which plays an important role in the present paper, will be introduced in the next section. 

In the discrete-time setting, which is particularly relevant to the present paper, Debs \cite{Debs} studied penalization problems for the one-dimensional {simple random walk} using weights depending on quantities such as the local time at the origin and excursion lengths. Najnudel--Roynette--Yor \cite{Najnudel-Roynette-Yor}, on the other hand, constructed, for more general recurrent Markov chains, a universal $\sigma$-finite measure that is closely related to penalization problems.

The direction-dependent excursion counts considered in the present paper record finer path information near the origin arising from the multidimensional nature of the walk. Indeed, for the one-dimensional {simple random walk}, an excursion that leaves the origin on the $+1$ side necessarily returns to the origin from the $+1$ side, and similarly, an excursion that leaves on the $-1$ side necessarily returns from the $-1$ side. Thus, even if one records the departure and return directions of an excursion separately, no pair of distinct directions can occur. In contrast, for the two-dimensional {simple random walk}, an excursion that leaves the origin in the direction of $v_i$ may return to the origin from the direction of $v_j$ even when $i\neq j$. Consequently, the pair consisting of the departure and return directions, as well as the numbers of crossings of individual edges incident to the origin, contains path information that cannot be captured by the local time at the origin alone.

In this paper, we investigate penalization problems based on such direction-dependent excursion counts and edge-crossing counts. We further show that the penalized measures arising from these finer path functionals can be represented in terms of the same universal $\sigma$-finite measure as that arising from penalization by the local time at the origin.

In (\ref{penalization-limit}), the penalization limit is obtained by letting the deterministic observation time $t$ tend to infinity. Such a choice of observation time is referred to as a \emph{constant clock}. Penalization problems based on random observation times, or \emph{random clocks}, have also been studied. For instance, \cite{Takeda-Yano} considers several random clocks, including an exponential clock based on an exponentially distributed random variable. When a random clock is used, properties of its distribution can often be exploited in the analysis of the penalization limit, and this may make the problem more tractable than its constant-clock counterpart.

In the present paper, we employ a geometric clock based on the geometric distribution. More precisely, for $p\in(0,1)$, set $q:=1-p$, and let $T:=T_p$ be a random variable independent of the random walk $S$, which will be introduced in the next subsection, such that
\begin{align}
\P(T=m)=pq^{m-1}
\qquad \text{for } m\ge 1.
\end{align}
Throughout the paper, expectations involving $T$ are understood to be taken with respect to both the random walk and $T$. We study penalization limits as $p\to0+$, or equivalently, as $q\to1-$. Moreover, in Appendix \ref{AppendixC}, we show that, under several additional assumptions, the penalization limit obtained with the geometric clock coincides with that obtained with the constant clock.

\subsection{Notation}
Hereafter, let $S=(S_n)_{n\ge 0}$ be the two-dimensional {simple random walk}, and let $\P_x$ denote the law of $S$ started from $x\in \Z^2$. We also set
\begin{align}
\F_n:=\sigma(S_0,...,S_n),\qquad \F_\infty:=\sigma\left(\bigcup_{n=0}^\infty \F_n\right).
\end{align}
Let $\tau$ denote the first hitting time of the origin after time $0$:
\begin{align}
\tau:=\inf\Big\{n\ge 1:\ S_n=0\Big\}.
\end{align}
We also let $\rho_0$ denote the first hitting time of the origin including time $0$, and define the successive return times to the origin by
\begin{align}
\rho_0&:=\inf\Big\{n\ge 0:\ S_n=0\Big\},\\
\rho_{k+1}&:=\inf\Big\{n>\rho_k:\ S_n=0\Big\}\qquad \text{for}\ k\ge 0.
\end{align}
Note that, for $x\neq 0$, we have $\tau=\rho_0$ under $\P_x$, whereas $\tau\neq \rho_0$ under $\P_0$. We define the local time $L=(L_n)_{n\ge 0}$ of $S$ at the origin by
\begin{align}
L_n:=\sum_{k=1}^n 1_{\{S_k=0\}}\qquad \text{for}\ n\ge 0.
\end{align}
The local time at the origin records the number of visits to the origin, but does not distinguish the direction in which each excursion leaves the origin or the direction from which it returns to the origin. Moreover, the number of crossings of a given edge incident to the origin cannot, in general, be recovered from the local time at the origin alone.

Motivated by this observation, in this paper we consider two types of penalization problems based on more detailed path information in a neighborhood of the origin. The first is a penalization by the counting process $L^{A\circlearrowleft}$, which counts completed excursions whose pair of departure and return directions belongs to a prescribed set $A$. The second is a penalization by the counting process $N^B$, which counts crossings of unoriented edges belonging to a prescribed set $B$ among the edges incident to the origin. The aim of this paper is to determine explicitly the penalization limits associated with these two types of counting processes and to describe, by means of path decompositions, the path structure of the corresponding penalized processes.

\begin{itemize}
\item Let $v_1,...,v_4$ denote the four lattice points adjacent to the origin:
\begin{align}
v_1:=(1,0),\qquad v_2:=(0,1),\qquad v_3:=(-1,0),\qquad v_4:=(0,-1).
\end{align}
An excursion that leaves the origin in the direction of $v_i$ and returns to the origin from the direction of $v_j$ is called an $(i,j)$-excursion. Fix a nonempty set $A\subseteq \{1,2,3,4\}^2$. An $(i,j)$-excursion satisfying $(i,j)\in A$ is called an $A$-excursion, and we define the counting process $L^{A\circlearrowleft}=(L_n^{A\circlearrowleft})_{n\ge 0}$, which counts the number of completed $A$-excursions, by
\begin{align}
L_n^{A\circlearrowleft}:=\sum_{k= 1}^\infty 1_{\{\rho_k\le n,\ (I_k,J_k)\in A\}}.
\end{align}
Here, $I_k$ and $J_k$ denote the departure and return directions, respectively, of the $k$-th excursion:
\begin{align}
v_{I_k}:=S_{\rho_{k-1}+1},\qquad v_{J_k}:=S_{\rho_k-1}.
\end{align}
We also denote by $p_A$ the probability that a single excursion is an $A$-excursion:
\begin{align}
p_A:=\P_0(L_\tau^{A\circlearrowleft}=1)=\P_0\Big((I_1,J_1)\in A\Big).
\end{align}
The explicit value of $p_A$ is given in Appendix \ref{AppendixB}.

\item For $i=1,2,3,4$, let $e_i:=\{0,v_i\}$ denote the four unoriented edges incident to the origin. Fix a nonempty set $B\subseteq \{e_1,e_2,e_3,e_4\}$, and call an unoriented edge belonging to $B$ a $B$-edge. We define the counting process $N^B=(N_n^B)_{n\ge 0}$, which counts the number of crossings of $B$-edges, by
\begin{align}
N_n^B:=\sum_{k=1}^n 1_{\{\{S_{k-1},S_k\}\in B\}},\qquad N_0^B:=0.
\end{align}
We also denote by $\kappa_B$ the probability that a single excursion crosses a $B$-edge at least once:
\begin{align}
\kappa_B:=\P_0(N_\tau^B\ge 1).
\end{align}
For the explicit value of $\kappa_B$, see Table \ref{table} {in Appendix \ref{AppendixB}}.
\end{itemize}

We define the Green's generating function $G_x$ for the two-dimensional {simple random walk} by
\begin{align}
\label{def-Green}
G_{x}(\xi):=\sum_{n=0}^\infty \P_x(S_n=0)\xi^n\qquad \text{for}\ x\in \Z^2,\ 0\le \xi< 1.
\end{align}
It is well known that the following limit exists and is finite:
\begin{align}
\label{a-definition}
a(x):=\lim_{\xi\to 1-}\Big(G_0(\xi)-G_x(\xi)\Big)\qquad \text{for}\ x\in \Z^2
\end{align}
(see, e.g., Section 4.4.1 of \cite{Lawler-Limic}). This function is called the potential kernel. The potential kernel $a$ provides the harmonic function used to construct the two-dimensional {simple random walk} conditioned to avoid the origin. More precisely, for $x\neq 0$, a probability measure $\P_x^\updownarrow$ on $\F_\infty$ can be defined by
\begin{align}
\frac{d\P_x^{\updownarrow}}{d\P_x}\Big|_{\F_n}=\frac{a(S_n)}{a(x)}1_{\{n<\tau\}}.
\end{align}
This is the h-transform by $a$ of the two-dimensional {simple random walk} killed at the origin, and it satisfies
\begin{align}
\P_x^\updownarrow(\tau=\infty)=1.
\end{align}
Thus, the process under $\P_x^\updownarrow$ can be interpreted as the two-dimensional {simple random walk} conditioned to avoid the origin. For this process, see, for example, Comets--Popov--Vachkovskaia \cite{Comets-Popov-Vachkovskaia} and Section 4 of \cite{Popov}. In the path decompositions of the penalized processes obtained in this paper, the path after its final departure from the origin is described by this law $\P_x^\updownarrow$.

\subsection{Main Results}
We now state the main results. Explicit expressions for the functions appearing in the limits, as well as the proofs of the theorems, will be given in later sections.

Unless otherwise stated, throughout the remainder of the paper, the function $f:\Z_{\ge 0}\to [0,\infty)$ is assumed to satisfy
\begin{align}
\label{assumption-f}
0<\sum_{k=0}^\infty f(k)<\infty.
\end{align}

\subsubsection{Penalization by Direction-Dependent Excursion Counts}
We introduce the process $(V_n)_{n\ge 0}$ recording the departure direction of the excursion currently in progress:
\begin{align}
V_n&:=\begin{cases}
S_{\rho_{k}+1}&\text{if}\ \rho_k<n<\rho_{k+1}\ \text{for}\ k\ge 0,\\[10pt]
0&\text{if}\ S_n=0\ \text{or}\ 0\le n<\rho_0.
\end{cases}
\end{align}
We also set
\begin{align}
E:=\Big(\Z^2\times \{0\}\Big)\cup\Big((\Z^2\setminus \{0\})\times \{v_1,...,v_4\}\Big).
\end{align}
For $(x,v)\in E$, let $\P_{x,v}$ denote the law of the process $(S,L^{A\circlearrowleft},V)$ started from $(x,0,v)$. When $v=v_i$, time $0$ is interpreted as lying in the middle of an excursion that departed from the origin in the direction of $v_i$. In this case, we set
\begin{align}
V_n=v_i\ \text{for}\ 0\le n<\rho_0,\qquad V_{\rho_0}=0,
\end{align}
and, if the return direction of this excursion is $v_j$, then $L^{A\circlearrowleft}$ is increased by $1$ at time $\rho_0$ whenever $(i,j)\in A$. After time $\rho_0$, the process evolves according to the rules defined above. When $v=0$, we use the usual definition and, in particular, identify $\P_{x,0}$ with $\P_x$.

We then obtain the following penalization limit.

\begin{thm}
\label{thm1.1}
For $(x,v)\in E$ and $l\in \Z_{\ge 0}$, 
\begin{align}
\label{PxvfLTA-lim}
\lim_{q\to 1-}p_AG_0(q)\P_{x,v}\Big[f(L_T^{A\circlearrowleft}+l)\Big]&= h_f^A(x,l,v).
\end{align}
Moreover,
\begin{align}
\Big(h_f^A(S_n,L_n^{A\circlearrowleft},V_n)\Big)_{n\ge 0}
\end{align}
is a nonnegative $\P_{x,v}$-martingale, and
\begin{align}
\label{penal-limit}
\lim_{q\to 1-}\frac{\P_x[1_{\Lambda_n}\cdot f(L_{T}^{A\circlearrowleft})]}{\P_x[f(L_{T}^{A\circlearrowleft})]}=\P_x\left[1_{\Lambda_n}\cdot \frac{h_f^A(S_n,L_n^{A\circlearrowleft},V_n)}{h_f^{A}(x,0,0)}\right]
\end{align}
for $n\ge 0,\ x\in \Z^2,$ and $\Lambda_n\in \F_n$. In particular,
\begin{align}
h_f^A(x,0,0)>0
\end{align}
for every $x\in \Z^2$.
\end{thm}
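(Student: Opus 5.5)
We will compute $\P_{x,v}[f(L_T^{A\circlearrowleft}+l)]$ explicitly using the geometric clock and the renewal structure at the origin. We decompose at $\rho_0$ and then at the successive returns $\rho_k$. The key quantities are $\phi(q):=\E_0[q^{\tau}]=1-1/G_0(q)$ and the direction-split generating functions
\begin{align*}
\phi_A(q):=\E_0\big[q^\tau;(I_1,J_1)\in A\big],\qquad \phi_{A^c}(q):=\phi(q)-\phi_A(q).
\end{align*}
Since $T$ is geometric and memoryless, $\P(T\ge m)=q^{m-1}$. Hence $\{T\ge \rho_k\}$ has weight $q^{\rho_k-1}$, and excursions are i.i.d.\ under $\P_0$. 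Write $L_T^{A\circlearrowleft}=K$. Then
\begin{align*}
\P_0(K=k)=\sum_{m\ge0}\binom{m+k}{k}\phi_A^k\phi_{A^c}^m\,(1-\phi)=\frac{(1-\phi)\,\phi_A^k}{(1-\phi_{A^c})^{k+1}}.
\end{align*}
As $q\to1-$ we have $1-\phi=1/G_0(q)\to0$, $\phi_A\to p_A$ and $1-\phi_{A^c}\to p_A$. Therefore $p_AG_0(q)\P_0(K=k)\to 1$ for each $k$. Dominated convergence, using $\phi_A/(1-\phi_{A^c})\le1$ and $\sum f<\infty$, gives $\lim p_AG_0(q)\P_0[f(K+l)]=\sum_{k\ge0}f(k+l)$.

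For general $(x,v)$ we split on whether $T<\rho_0$. On $\{T<\rho_0\}$ the count is $0$, so the contribution is $p_AG_0(q)f(l)\,\P_x(T<\tau)$. Here we use the standard asymptotic $G_0(q)\P_x(\tau>T)=G_0(q)(1-\E_x[q^{\tau}])\to a(x)$, which follows from $G_x(q)=\E_x[q^\tau]G_0(q)$ and the definition (\ref{a-definition}) of $a$. On $\{T\ge\rho_0\}$, the first incomplete excursion ends with return direction $J$. If $v=v_i$, it adds $1_{\{(i,J)\in A\}}$; we then restart from $0$ with the strong Markov property. We also need the joint law of $(q^{\rho_0},J)$ under $\P_x$, whose $q\to1$ limit is $\P_x(S_{\tau-1}=v_j)$ conditioned on the first departure. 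The result is an explicit candidate
\begin{align*}
h_f^A(x,l,v)=p_Af(l)a(x)+\sum_{j}\P_{x}\big(S_{\rho_0-1}=v_j\big)\sum_{k\ge0}f\big(k+l+1_{\{(i,j)\in A\}}\big),
\end{align*}
with the indicator dropped when $v=0$. For $x=0$ this reduces to $\sum_k f(k+l)$. Positivity at $(x,0,0)$ follows from $\sum_k f(k)>0$, since the second term equals $\sum_{k\ge0}f(k)$ when $v=0$ (recurrence gives $\P_x(\rho_0<\infty)=1$).

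For the martingale property we use the one-step Markov identity for the prelimit. Let $u_q(x,l,v):=p_AG_0(q)\P_{x,v}[f(L_T^{A\circlearrowleft}+l)]$. Memorylessness of $T$ gives
\begin{align*}
u_q(x,l,v)=p_AG_0(q)\,p\,f(l)+q\,\E_{x,v}\big[u_q(S_1,L_1^{A\circlearrowleft}+l,V_1)\big].
\end{align*}
Since $pG_0(q)\to0$ (as $G_0(q)\sim\frac1\pi\log\frac1{1-q}$), letting $q\to1$ yields harmonicity of $h_f^A$ for the one-step kernel. Exchanging limit and expectation is legitimate because the expectation is a finite sum over four neighbours. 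Iterating gives the martingale property. Nonnegativity is clear.

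Finally we prove (\ref{penal-limit}). By the Markov property at time $n$ and memorylessness,
\begin{align*}
\P_x[1_{\Lambda_n}f(L_T^{A\circlearrowleft})]=\P_x\big[1_{\Lambda_n}f(L_{T}^{A\circlearrowleft});T\le n\big]+q^n\P_x\big[1_{\Lambda_n}\,\P_{S_n,V_n}[f(L_T^{A\circlearrowleft}+l)]_{l=L_n^{A\circlearrowleft}}\big].
\end{align*}
Multiplying by $p_AG_0(q)$, the first term is at most $p_AG_0(q)(1-q^n)\sup f\to0$. The second term converges to $\P_x[1_{\Lambda_n}h_f^A(S_n,L_n^{A\circlearrowleft},V_n)]$ by bounded convergence, since only finitely many states are reachable in $n$ steps. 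Dividing by the $n=0$ case gives the claim.

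The main obstacle is the first step for general $v$ and $x\ne0$. It requires the uniform asymptotics of $G_0(q)\P_x(T<\tau)$ together with the joint law of return direction and $q^{\rho_0}$. It also requires justifying dominated convergence in $k$ uniformly in $q$, which rests on the ratio bound $\phi_A/(1-\phi_{A^c})\le1$.
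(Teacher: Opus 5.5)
Your proposal is correct and follows essentially the same route as the paper: your negative-binomial law of $L_T^{A\circlearrowleft}$ under $\P_0$ is the geometric law with ratio $\widetilde{F}^A=F^A/(1-F_0+F^A)$ from Lemma~\ref{lem2.1}, your decomposition at $\tau$ is Lemma~\ref{lem2.3}, and your candidate reduces to the paper's $h_f^A(x,l,v)=(p_Aa(x)-r^A(x,v))f(l)+\sum_{k\ge0}f(k+l)$ via $\sum_j\P_x(S_{\tau-1}=v_j)=1$; the harmonicity and penalization steps are those of Lemma~\ref{h-martingale-lem} and Proposition~\ref{thm2.5}. The only slip is in the clock bookkeeping: with $\P(T=m)=pq^{m-1}$, $m\ge1$, three of your identities are written for a clock on $\{0,1,2,\dots\}$, namely your formula for $\P_0(K=k)$, the identity $\P_x(\tau>T)=1-\P_x[q^\tau]$ (it is $1-F_x(q)/q$), and the term $pf(l)$ in the one-step identity (it should be $p\,\P_{x,v}[f(L_1^{A\circlearrowleft}+l)]$); each discrepancy is $O(1-q)$ uniformly, so it disappears after multiplication by $G_0(q)$ because $(1-q)G_0(q)\to0$.
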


The function $h_f^A$ is defined in (\ref{Def-h_f^A}), and the proof of this theorem will be given in Subsection \ref{S2.1}. More precisely, the assertions of this theorem are proved separately in Lemmas \ref{lem2.3} and \ref{h-martingale-lem} and Proposition \ref{thm2.5}.

For $x\in \Z^2$, let $\Q_x^{f,A\circlearrowleft}$ denote the penalized probability measure on $\F_\infty$ arising in the limit (\ref{penal-limit}); that is,
\begin{align}
\frac{d\Q_x^{f,A\circlearrowleft}}{d\P_x}\Big|_{\F_n}=\frac{h_f^A(S_n,L_n^{A\circlearrowleft},V_n)}{h_f^A(x,0,0)}
\qquad \text{for}\ n\ge 0,\ x\in \Z^2.
\end{align}
The existence of such a measure follows from, for example, Theorem A.1 of \cite{Yano-Penal}. By Lemma \ref{h-martingale-lem}, $h_f^A$ is harmonic for the process $(S,L^{A\circlearrowleft},V)$. Hence, under the probability measure $\Q_x^{f,A\circlearrowleft}$, the process $(S,L^{A\circlearrowleft},V)$ is the h-transform of $(S,L^{A\circlearrowleft},V)$ with respect to $h_f^A$. Since $(S,L^{A\circlearrowleft},V)$ is a strong Markov process, the general theory of h-transforms implies that $(S,L^{A\circlearrowleft},V)$ under $\Q_x^{f,A\circlearrowleft}$ is also a strong Markov process (see, e.g., Theorem 11.9 of \cite{C-W}).

For this penalized process, we obtain an explicit path decomposition in terms of the total number of $A$-excursions. Here we give only an overview; a precise formulation and proof will be given later in Subsection \ref{S2.2}.

Under $\Q_x^{f,A\circlearrowleft}$, the total number of $A$-excursions $L_\infty^{A\circlearrowleft}$ is almost surely finite. Let $k\ge 1$ and suppose that the event $\{L_\infty^{A\circlearrowleft}=k\}$ has positive probability. Conditional on this event, the path up to the completion time $\sigma_k$ of the $k$-th $A$-excursion has the same law as the original simple random walk stopped at $\sigma_k$. The path after $\sigma_k$ has law $\P_{0,0}^{A\updownarrow}$ and is independent of the path before $\sigma_k$. On the other hand, if $\{L_\infty^{A\circlearrowleft}=0\}$ has positive probability, then, conditional on this event, the entire path has law $\P_{x,0}^{A\updownarrow}$ (Proposition \ref{thm2.9}). Here, $\P_{x,0}^{A\updownarrow}$ denotes the law of the process started from $x$ and conditioned never to complete an $A$-excursion; its precise construction is given in (\ref{def-P_{xv}^Aupdown}).

Furthermore, under $\P_{0,0}^{A\updownarrow}$, the path can be decomposed into finitely many excursions followed by a path that never returns to the origin. If $A\neq\{1,2,3,4\}^2$, the number of these finite excursions has a geometric distribution on $\Z_{\ge 0}$ with parameter $p_A$. Conditional on this number, the excursions are independent, and each has the law of an excursion of the original simple random walk conditioned on $(I_1,J_1)\notin A$. After these excursions, the process moves to a point $v_i$ chosen uniformly from the four points adjacent to the origin and, from that point onward, follows the law $\P_{v_i}^{\updownarrow}$ (Proposition \ref{thm2.10}). If $A=\{1,2,3,4\}^2$, no finite excursion is made: on its first step, the process moves to a point $v_i$ chosen uniformly from the four points adjacent to the origin and, from that point onward, follows the law $\P_{v_i}^\updownarrow$ (Proposition \ref{thm2.11}).

Finally, let $x\neq 0$. Under $\P_{x,0}^{A\updownarrow}$, we distinguish according to whether or not the path hits the origin. Conditional on hitting the origin, the path up to the hitting time $\tau$ has the same law as the original simple random walk, while the path after $\tau$ follows $\P_{0,0}^{A\updownarrow}$; under this conditioning, the pre-$\tau$ and post-$\tau$ paths are independent. On the other hand, conditional on never hitting the origin, the entire path has law $\P_x^\updownarrow$ (Proposition \ref{thm2.12}).

The following figure illustrates the path decomposition in the case $A\neq \{1,2,3,4\}^2$, under the conditions that $L_\infty^{A\circlearrowleft}=k\ge 1$ under $\Q_x^{f,A\circlearrowleft}$ and $L_\infty=n\ge 1$ under $\P_{0,0}^{A\updownarrow}$. 

\begin{figure}[htbp]
\centering
\begin{tikzpicture}[
    >=Latex,
    font=\small,
    x=1cm,
    y=1cm,
    rw/.style={
      thick,
      decorate,
      decoration={snake,amplitude=.7mm,segment length=4mm}
    },
    tail/.style={
      thick,
      dashed,
      decorate,
      decoration={snake,amplitude=.7mm,segment length=4mm}
    },
    excursion/.style={thick},
    guide/.style={gray!60}
]

\node[anchor=west] at (-0.3,5.25)
{\textbf{(a)}\quad
Conditional on $L_{\infty}^{A\circlearrowleft}=k\ge1$};

\draw[guide] (0,3.5) -- (12.2,3.5);

\fill (0.4,4.0) circle (1.5pt);
\node[left] at (0.4,4.0) {$x$};

\draw[rw]
(0.4,4.0) --
(1.1,3.72) --
(1.55,3.5);

\draw[excursion]
(1.55,3.5)
to[out=75,in=105] (2.55,3.5);

\draw[excursion]
(2.55,3.5)
to[out=70,in=110] (3.25,3.5);

\node at (4.0,3.9) {$\cdots$};

\draw[excursion, line width=1.2pt]
(4.65,3.5)
to[out=72,in=108]
node[above=5pt] {}
(6.15,3.5);

\fill (6.15,3.5) circle (1.4pt);
\node[below=2pt] at (6.15,3.5) {$\sigma_k$};

\draw[densely dotted]
(6.15,3.15) -- (6.15,4.55);

\draw[tail]
(6.15,3.5)
to[out=65,in=115] (6.9,3.5)
to[out=60,in=120] (7.55,3.5)
to[out=60,in=120] (8.15,3.5);

\draw[rw]
(8.15,3.5) --
(8.7,3.85) --
(9.25,3.72) --
(9.8,3.97) --
(10.4,3.82) --
(11.0,4.05) --
(11.7,4.0);

\node[align=center] at (3.25,3)
{$\mathbb P_x$ stopped at $\sigma_k$};

\node[align=center] at (9.5,3)
{$\mathbb P^{A\updownarrow}_{0,0}$: independent of pre-$\sigma_k$ process};

\node[anchor=west] at (-0.3,1.85)
{\textbf{(b)}\quad
Under $\mathbb P^{A\updownarrow}_{0,0}$,
conditional on $L_\infty=n\ge1$};

\draw[guide] (0,0.2) -- (12.2,0.2);

\fill (0.6,0.2) circle (1.3pt);
\node[below=2pt] at (0.6,0.2) {$0$};

\draw[dashed,thick]
(0.6,0.2)
to[out=65,in=115]
node[above=3pt] {$\mathcal E_1$}
(2.0,0.2);

\node at (3.2,0.65) {$\cdots$};

\draw[dashed,thick]
(4.1,0.2)
to[out=65,in=115]
node[above=3pt] {$\mathcal E_n$}
(5.6,0.2);

\fill (5.6,0.2) circle (1.3pt);
\node[below=2pt] at (5.6,0.2) {$0$};

\draw[thick] (5.6,0.2) -- (6.45,0.65);
\fill (6.45,0.65) circle (1.3pt);
\node[above=2pt] at (6.45,0.65) {$v_i$};

\draw[rw]
(6.45,0.65) --
(7.1,0.87) --
(7.7,0.75) --
(8.25,1.0) --
(8.9,0.88) --
(9.55,1.1) --
(10.2,1.0) --
(10.9,1.18) --
(11.6,1.15);

\node[align=center] at (3.05,-0.75)
{$n$ independent $A^c$-excursions\\
each with law
$\mathbb P_0\!\left(\,\cdot\mid(I_1,J_1)\notin A\right)$};

\node[align=left] at (8.8,-0.65)
{$\mathbb P^{\updownarrow}_{v_i}$: independent of $\mathcal E_1,\ldots,\mathcal E_n$};

\end{tikzpicture}
\caption{Path decomposition under
$\mathbb Q_x^{f,A\circlearrowleft}$.}
\label{fig:A-path-decomposition}
\end{figure}

\subsubsection{Penalization by Edge-Crossing Counts}
For the edge-crossing count $N^B$, we obtain the following penalization limit.

\begin{thm}
\label{thm1.2}
For $x\in \Z^2$ and $l\in \Z_{\ge 0}$, 
\begin{align}
\label{thm1.2-eq1}
\lim_{q\to 1-}G_0(q)\P_x\Big[f(N_T^B+l)\Big] = H_f^B(x,l).
\end{align}
Moreover,
\begin{align}
\Big(H_f^B(S_n,N_n^B)\Big)_{n\ge 0}
\end{align}
is a nonnegative $\P_x$-martingale, and, for every $x\in \Z^2$ such that $H_f^B(x,0)>0$,
\begin{align}
\label{penal-limit-2}
\lim_{q\to 1-}\frac{\P_x[1_{\Lambda_n}\cdot f(N_{T}^{B})]}{\P_x[f(N_{T}^{B})]}=\P_x\left[1_{\Lambda_n}\cdot \frac{H_f^B(S_n,N_n^B)}{H_f^B(x,0)}\right]
\end{align}
for $n\ge 0$ and $\Lambda_n\in \F_n$.
\end{thm}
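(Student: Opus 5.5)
The plan is to run the same excursion-theoretic argument that underlies Theorem \ref{thm1.1}, with excursions now marked by the number of $B$-edge crossings they contain. Let $\mathcal{E}$ denote a generic excursion from the origin under $\P_0$. Its crossing count $N_\tau^B$ takes values in $\{0,1,2\}$, since an excursion can cross an edge incident to $0$ only on its first and last steps. By the strong Markov property at the times $\rho_k$, the increments $N^B_{\rho_{k}}-N^B_{\rho_{k-1}}$ are i.i.d. with the law of $N^B_\tau$ under $\P_0$.

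\textbf{Step 1 (case $x=0$).} Using the geometric clock, I would write
\begin{align*}
\P_0\Big[f(N_T^B+l)\Big]=\sum_{k\ge 0}\P_0\Big[f(N^B_{\rho_k}+l+\delta_k);\ \rho_k<T\le\rho_{k+1}\Big],
\end{align*}
where $\delta_k\in\{0,1\}$ records whether the first step of the $(k+1)$-th excursion crosses a $B$-edge before $T$. By independence of $T$ and memorylessness, this equals
\begin{align*}
\sum_k \P_0\Big[q^{\rho_k}\,F_q\big(N^B_{\rho_k}\big)\Big],
\end{align*}
where $F_q$ is an explicit single-excursion functional. Moreover
\begin{align*}
\P_0\Big[q^{\rho_k};\,N^B_{\rho_k}=m\Big]
\end{align*}
is the coefficient extraction of $\phi_q(s)^k$, with $\phi_q(s):=\P_0[q^\tau s^{N^B_\tau}]$. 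Since $1-\P_0[q^\tau]=1/G_0(q)\to0$, summing over $k$ and multiplying by $G_0(q)$ should produce, in the limit, a renewal-type sum. This is where I expect $H_f^B(0,l)$ to appear as a finite combination of $f(l+m)$ and tail sums $\sum_{j\ge0} f(l+j)\,\P_0(\text{partial sums of crossing counts}=\cdot)$, weighted by $\kappa_B$. The key point is that $G_0(q)(1-\phi_q(s))$ converges as $q\to1-$ only for $s=1$. So I would instead decompose via a geometric number of excursions. Under the limit, the number of excursions with $N_\tau^B\ge1$ before $T$ behaves like a flat measure, so that
\begin{align*}
G_0(q)\,\P_0\big[f(N_T^B+l)\big]\to\kappa_B^{-1}\sum_j f(l+j)\,u(j)
\end{align*}
up to boundary corrections, where $u$ is the renewal mass function of the crossing counts conditioned positive. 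Finiteness then follows from $u\le1$ and \eqref{assumption-f}.

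\textbf{Step 2 (general $x$).} For $x\ne0$ I would decompose at $\tau$. On $\{T\le\tau\}$ we have $N^B_T\in\{0\}$ for $x\notin\{v_i\}$, and $N^B_T\in\{0,1\}$ in general. Using $G_0(q)\P_x(T>\tau)\cdot(\text{stuff})$ together with the fact that $G_0(q)\P_x(T\le\tau)=G_0(q)-G_x(q)\to a(x)$ by \eqref{a-definition}, I expect
\begin{align*}
H_f^B(x,l)=a(x)f(l)+H_f^B(0,l),
\end{align*}
with a modification for $x=v_i$ when $e_i\in B$: the final step into $0$ adds one crossing, so the second term becomes $H_f^B(0,l+1)$. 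Dominated convergence is justified by $f$ being summable and the prefactors being bounded.

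\textbf{Step 3 (martingale and penalization).} The martingale property would be checked directly as harmonicity. By the Markov property and the memorylessness of $T$,
\begin{align*}
\P_x\big[f(N^B_T+l)\big]=p f(l)+q\,\P_x\big[\P_{S_1}[f(N_T^B+l+N_1^B)]\big].
\end{align*}
Multiplying by $G_0(q)$ and letting $q\to1$ kills the $pf(l)G_0(q)$ term, since $pG_0(q)\to0$. This yields
\begin{align*}
H_f^B(x,l)=\P_x\big[H_f^B(S_1,N_1^B+l)\big].
\end{align*}
Finally, \eqref{penal-limit-2} follows by conditioning on $\F_n$ on $\{T>n\}$ and using $\P(T\le n)\to0$. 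The ratio is a ratio of the limits \eqref{thm1.2-eq1}, with $l=N_n^B$ and starting point $S_n$, and the interchange of limit and expectation is valid because $\Lambda_n$ involves finitely many paths.

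The main obstacle is Step 1. The limit of $G_0(q)\P_0[f(N_T^B+l)]$ requires a careful Tauberian-type analysis of the crossing-count renewal sequence, together with exact handling of the last incomplete excursion. In the completed-excursion case this issue did not arise as delicately.
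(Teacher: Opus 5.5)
Your excursion route differs from the paper's, and it can be made to work. The paper obtains $\P_x[q^{\varsigma_k}]$ from the transfer matrix $M_B(q)$ built from $U_x(q),V_x(q)$, and reads $H_k^B$ off the eigenvalues $1,\lambda^B_-$ of $M_B$. However, \textbf{Step 2 is wrong as stated.} For every $x\neq0$ the walk avoids $0$ before $\tau$, so $N^B_{\tau-1}=0$. But its last step always runs along one of the edges $e_j$. Hence $N^B_\tau=1_{\{S_{\tau-1}\in\widetilde{B}\}}$ for every starting point $x\neq 0$, not only for $x=v_i$. The correct decomposition at $\tau$ is, for $x\neq 0$,
\[
H_f^B(x,l)=a(x)f(l)+\sum_{j=1}^4\P_x(S_{\tau-1}=v_j)\,H_f^B\bigl(0,\,l+1_{\{v_j\in\widetilde{B}\}}\bigr),\qquad \P_x(S_{\tau-1}=v_j)=\tfrac14\bigl(a(x)+1-a(x-v_j)\bigr).
\]
This is not $a(x)f(l)+H_f^B(0,l)$ for general $x$: the entrance law puts mass tending to $|B|/4$ on $\widetilde{B}$ as $|x|\to\infty$. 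Nor is it $f(l)+H_f^B(0,l+1)$ at $x=v_i$ with $e_i\in B$, since $\P_{v_i}(S_{\tau-1}=v_i)=p_{i,i}=1/2$. Concretely, take $B=\{e_1\}$ and $f=1_{\{0\}}$. Then $H_f^B(0,0)=2$ and $H_f^B(v_2,0)=\frac12\bigl(a(v_2)+a(v_2-v_1)+3\bigr)=2+\frac2\pi$, whereas your formula gives $3$. With the corrected formula one recovers (\ref{Def-H0})--(\ref{Def-Hk}), e.g.\ $H_0^B(x)=a(x)+\frac{4-|B|}{4\kappa_B}\P_x(S_{\tau-1}\notin\widetilde{B})$ for $x\neq0$.

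\textbf{Step 1 is left heuristic, and its displayed limit omits an order-one term.} The first step of the excursion straddling $T$ is not a boundary correction. Dropping it gives $\kappa_B^{-1}$ instead of $H_0^B(0)=\frac{4-|B|}{4\kappa_B}$ for $f=1_{\{0\}}$, $l=0$. Once this term is kept, no Tauberian analysis is needed. The first step of an excursion is uniform and independent of its length, so for $k\ge0$
\[
\P_0\bigl[f(N_T^B+l);\ \rho_k<T<\rho_{k+1}\bigr]=\Bigl(1-\frac{F_0(q)}{q}\Bigr)\P_0\bigl[q^{\rho_k}g_l(N^B_{\rho_k})\bigr],\qquad g_l(n):=\tfrac{4-|B|}{4}f(n+l)+\tfrac{|B|}{4}f(n+l+1).
\]
After multiplying by $G_0(q)$, the events $\{T=\rho_k\}$ contribute at most $\|f\|_\infty\frac{p}{q}F_0(q)G_0(q)^2\to0$. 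Also $G_0(q)(1-F_0(q)/q)=\frac{1-(1-q)G_0(q)}{q}\to1$. Monotone convergence then gives
\[
H_f^B(0,l)=\sum_{k\ge0}\P_0[g_l(N^B_{\rho_k})]=\kappa_B^{-1}\sum_{j\ge0}u(j)g_l(j)<\infty.
\]
To match (\ref{def-HfB}) you still have to solve the renewal recursion. Since $v_{I_1}$ and $v_{J_1}$ are both uniform, $\P_0[N^B_\tau]=|B|/2$. This gives $\P_0(N^B_\tau=2)=\frac{|B|}{2}-\kappa_B$, and hence $u(j)=\frac{2\kappa_B}{|B|}+\bigl(1-\frac{2\kappa_B}{|B|}\bigr)(\lambda^B_-)^j$. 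Compared with the paper, this route yields a probabilistic formula for $H_f^B$ and avoids the $2\times2$ Green-function system. The price is computing $u$ and the entrance law separately.

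Finally, two minor points in Step 3, both harmless. You need $G_0(q)\P(T\le n)\to0$ rather than just $\P(T\le n)\to0$; this holds because $(1-q)G_0(q)\to0$. Also, the $T=1$ term is $p\P_x[f(N_1^B+l)]$, not $pf(l)$.
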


The function $H_f^B$ is defined in (\ref{def-HfB}), and the proof of this theorem will be given in Subsection \ref{S3.1}. More precisely, the assertions of this theorem are proved separately in Propositions \ref{3.6} and \ref{thm3.7}. Moreover, Proposition \ref{lem3.8} gives an equivalent condition for $H_f^B(x,0)>0$.

For $x\in \Z^2$ satisfying $H_f^B(x,0)>0$, let $\Q_x^{f,B\leadsto}$ denote the penalized probability measure on $\F_\infty$ arising in the limit (\ref{penal-limit-2}); that is,
\begin{align}
\frac{d\Q_x^{f,B\leadsto}}{d\P_x}\Big|_{\F_n}:=\frac{H_f^B(S_n,N_n^B)}{H_f^B(x,0)}\qquad \text{for}\ n\ge 0.
\end{align}
As above, under $\Q_x^{f,B\leadsto}$, the process $(S,N^B)$ is the h-transform of $(S,N^B)$ with respect to $H_f^B$ and is a strong Markov process. In what follows, we consider the penalized measure associated with the edge-crossing count only for starting points $x\in \Z^2$ satisfying $H_f^B(x,0)>0$.

In this case as well, we obtain an explicit path decomposition in terms of the total number of crossings of $B$-edges. Here we give only an overview; a precise formulation and proof will be given in Subsection \ref{S3.2}.

Suppose that $B\neq\{e_1,e_2,e_3,e_4\}$. Under $\Q_x^{f,B\leadsto}$, $N_\infty^B$ is almost surely finite. Let $\varsigma_k$ denote the time of the $k$-th crossing of a $B$-edge. For $k\ge 1$ and $y\in \Z^2$, suppose that the event $\{N_\infty^B=k,\ S_{\varsigma_k}=y\}$ has positive probability. Conditional on this event, the path up to $\varsigma_k$ has the same law as the original simple random walk stopped at $\varsigma_k$, conditional on $S_{\varsigma_k}=y$. The path after $\varsigma_k$ has law $\mathbb P_y^{B\Updownarrow}$ and is independent of the path before $\varsigma_k$. On the other hand, if $\{N_\infty^B=0\}$ has positive probability, then, conditional on this event, the entire path has law $\P_x^{B\Updownarrow}$ (Proposition \ref{thm3.10}). Here, $\P_x^{B\Updownarrow}$ denotes the law of the process started from $x$ and conditioned never to cross a $B$-edge; its precise construction is given in (\ref{def-PxBupd}).

The path under $\P_y^{B\Updownarrow}$ can be decomposed further. First, suppose that $y\neq 0$. We distinguish according to whether or not the path hits the origin. Conditional on hitting the origin, the path up to the hitting time $\tau$ follows the law of the simple random walk conditioned on $N_\tau^B=0$. The path after $\tau$ is independent of the path before $\tau$ and has law $\P_0^{B\Updownarrow}$. On the other hand, conditional on never hitting the origin, the entire path has law $\P_y^{\updownarrow}$.

Next, under $\P_0^{B\Updownarrow}$, the process makes finitely many excursions that do not cross any $B$-edge and then leaves the origin for the last time. The number of such excursions has a geometric distribution on $\Z_{\ge 0}$ with parameter $\kappa_B$. Conditional on this number, the excursions are independent, and each has the law of an ordinary excursion conditioned on $N_\tau^B=0$. The process then leaves the origin along an edge chosen uniformly from $\{e_1,e_2,e_3,e_4\}\setminus B$. If the endpoint of this edge is $v_i$, then, from that point onward, the path follows the law $\P_{v_i}^\updownarrow$ (Proposition \ref{thm3.11}).

On the other hand, suppose that $B=\{e_1,e_2,e_3,e_4\}$. For a path with a positive total number of crossings, the position immediately after the final crossing of a $B$-edge is one of the neighboring points $v_i$ distinct from the origin. From that point onward, the path follows the law $\P_{v_i}^{\updownarrow}$. Moreover, if the process starts from $x\neq 0$ and the total number of crossings is $0$, then the entire path has law $\P_x^\updownarrow$ (Proposition \ref{3.13}).

The following figure illustrates the path decomposition in the case $B\neq \{e_1,e_2,e_3,e_4\}$, under the conditions that $N_\infty^B=k\ge 1$ under $\Q_x^{f,B\leadsto}$ and $L_\infty=n\ge 1$ under $\P_{0}^{B\Updownarrow}$.

\begin{figure}[htbp]
\centering
\begin{tikzpicture}[
    >=Latex,
    font=\small,
    x=1cm,
    y=1cm,
    rw/.style={
      thick,
      decorate,
      decoration={snake,amplitude=.7mm,segment length=4mm}
    },
    tail/.style={
      thick,
      dashed,
      decorate,
      decoration={snake,amplitude=.7mm,segment length=4mm}
    },
    guide/.style={gray!60}
]

\node[anchor=west] at (-0.3,6.0)
{\textbf{(a)}\quad
Conditional on $N_\infty^B=k\ge1$ and $S_{\varsigma_k}=y$};

\draw[guide] (0,4.25) -- (12.4,4.25);

\fill (0.4,4.65) circle (1.4pt);
\node[left] at (0.4,4.65) {$x$};

\draw[rw]
(0.4,4.65) --
(1.2,4.4) --
(2.0,4.58) --
(2.7,4.38);

\node at (3.35,4.55) {$\cdots$};

\draw[rw]
(3.9,4.45) --
(4.7,4.55) --
(5.35,4.25);

\draw[line width=1.5pt]
(5.35,4.25) -- (6.35,4.25);

\fill (6.35,4.25) circle (1.5pt);
\node[above right] at (6.35,4.25) {$y$};
\node[below=2pt] at (6.35,4.25) {$\varsigma_k$};

\draw[densely dotted]
(6.35,3.8) -- (6.35,5.45);

\draw[tail]
(6.35,4.25) --
(7.1,4.52) --
(7.8,4.38) --
(8.5,4.58) --
(9.2,4.4) --
(10.0,4.62) --
(10.9,4.5);

\node[align=center] at (3.2,3.55)
{$\mathbb P_x(\,\cdot\mid S_{\varsigma_k}=y)$ stopped at $\varsigma_k$};

\node[align=center] at (9.6,3.55)
{$\mathbb P_y^{B\Updownarrow}$: independent of pre-$\varsigma_k$ process};

\node[anchor=west] at (-0.3,2.65)
{\textbf{(b)}\quad
Tail from $y\neq0$: law $\mathbb P_y^{B\Updownarrow}$};

\fill (0.8,1.25) circle (1.3pt);
\node[left] at (0.8,1.25) {$y$};

\draw[->,thick]
(1.0,1.3)
to[out=25,in=180]
(3.0,1.75);

\node[above] at (1.4,1.65) {$\tau<\infty$};

\fill (3.0,1.75) circle (1.3pt);
\node[above] at (3.0,1.75) {$0$};

\draw[->,thick] (3.1,1.75) -- (5.4,1.75);

\node[left] at (12.18,1.75)
{$\P_y(\,\cdot\mid N_\tau^B=0)$ stopped at $\tau\ \to\ \P_0^{B\Updownarrow}$ };

\draw[->,thick]
(1.0,1.2)
to[out=-20,in=180]
(2.7,0.65);

\node[below] at (1.4,0.85) {$\tau=\infty$};

\draw[rw]
(2.7,0.65) --
(3.4,0.8) --
(4.05,0.67) --
(4.75,0.86) --
(5.4,0.75);

\node at (7.05,0.8)
{$\mathbb P_y^{\updownarrow}$: never hits $0$};

\node[anchor=west] at (-0.3,-0.35)
{\textbf{(c)}\quad
Under $\mathbb P_0^{B\Updownarrow}$,
conditional on $L_\infty=n\ge1$};

\draw[guide] (0,-2.0) -- (12.3,-2.0);

\fill (0.6,-2.0) circle (1.3pt);
\node[below=2pt] at (0.6,-2.0) {$0$};

\draw[dashed,thick]
(0.6,-2.0)
to[out=65,in=115]
node[above=3pt] {$\mathcal E_1$}
(2.0,-2.0);

\node at (3.1,-1.55) {$\cdots$};

\draw[dashed,thick]
(4.0,-2.0)
to[out=65,in=115]
node[above=3pt] {$\mathcal E_n$}
(5.5,-2.0);

\fill (5.5,-2.0) circle (1.3pt);
\node[below=2pt] at (5.5,-2.0) {$0$};

\draw[line width=1.2pt]
(5.5,-2.0) -- (6.4,-1.5);

\fill (6.4,-1.5) circle (1.3pt);
\node[above=2pt] at (5.9,-1.52) {$e_i\notin B$};

\draw[rw]
(6.4,-1.5) --
(7.1,-1.27) --
(7.8,-1.4) --
(8.45,-1.18) --
(9.1,-1.3) --
(9.8,-1.08) --
(10.5,-1.18) --
(11.2,-1.02);

\node[align=center] at (3.0,-2.95)
{$n$ independent excursions\\
$\mathbb P_0(\,\cdot\mid N_\tau^B=0)$ stopped at $\tau$};

\node[align=left] at (8.75,-2.8)
{$\mathbb P_{v_i}^{\updownarrow}$: independent of $\mathcal E_1,\ldots,\mathcal E_n$
};

\end{tikzpicture}
\caption{Path decomposition under
$\mathbb Q_x^{f,B\leadsto}$.}
\label{fig:B-path-decomposition}
\end{figure}

{
We next state another main result.

\begin{thm}[Parts of Propositions \ref{thm2.10} and \ref{thm3.11}]
Assume that $A\neq\{1,2,3,4\}^2$ and $|B| < 4$. For $n\ge 0$, we have 
\begin{align}
    \P_{0,0}^{A\updownarrow}(L_\infty=n)&=p_A(1-p_A)^{n}, \\
   \P_0^{B\Updownarrow}(L_\infty=n)&=\kappa_B(1-\kappa_B)^n.
\end{align}
In particular, 
\begin{equation}
\P_{0,0}^{A\updownarrow}(L_\infty < \infty) = \P_0^{B\Updownarrow}(L_\infty<\infty) = 1.
\end{equation}
\end{thm}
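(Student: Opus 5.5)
Since $\P_{0,0}^{A\updownarrow}$ is only defined later (in (\ref{def-P_{xv}^Aupdown})), the plan is to use the natural construction it should have. That construction is the $h$-transform of the walk killed at the completion of its first $A$-excursion, with harmonic function $h^A(x,v):=\lim_{q\to1-}p_AG_0(q)\P_{x,v}(L_T^{A\circlearrowleft}=0)$. This is the case $f=1_{\{0\}}$, $l=0$ of Theorem \ref{thm1.1}. The analogous construction for $\P_0^{B\Updownarrow}$ uses $H^B(x):=\lim_{q\to1-}G_0(q)\P_x(N_T^B=0)$, the case $f=1_{\{0\}}$ of Theorem \ref{thm1.2}. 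With these in hand, the event $\{L_\infty=n\}$ splits as the disjoint union, over $m\ge 1$, of the events: exactly $n$ returns to $0$ occur, each completed excursion is an $A^c$-excursion (respectively crosses no $B$-edge), and after $\rho_n$ the path never returns. I will compute the probability of each such event through the density $h^A(S_m,\cdot)/h^A(0,0)$ at the time $m$ of the first step after $\rho_n$.

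\textbf{Step 1 (the value of $h^A$ on the relevant states).} I expect
\[
h^A(0,0)=1, \qquad h^A(v_i,v_i)=1\cdot c\,a(v_i)\ \text{ up to a common constant.}
\]
To see this, under $\P_0$ with geometric clock I condition on the excursion structure. The walk makes $j$ completed $A^c$-excursions and then is cut by $T$ during an excursion, or the clock falls at $0$. Summing the geometric series gives
\[
\P_0(L_T^{A\circlearrowleft}=0)=\frac{\P_0(\tau\ge T)\,(\cdots)}{1-\E_0[q^\tau;\,(I_1,J_1)\notin A]}.
\]
Here $1-\E_0[q^\tau]=1/G_0(q)$, and $\E_0[q^\tau;(I_1,J_1)\in A]\to p_A$, so the denominator tends to $p_A$. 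This yields $p_AG_0(q)\P_0(L_T^{A\circlearrowleft}=0)\to$ an explicit constant. For a start $(v_i,v_i)$, one decomposes at $\rho_0$. The no-return part contributes the term $\lim G_0(q)\P_{v_i}(\tau\ge T)$, which equals $a(v_i)$ by (\ref{a-definition}) and the standard identity $G_0-G_x=G_0\P_x(\tau\ge T)$ (up to normalisation). The return part contributes an amount proportional to $\P_{v_i}(\tau<\infty,\ (i,J)\notin A)$ times $h^A(0,0)$, which is negligible once multiplied appropriately. The same computations apply for $B$, with the departure restricted to the edges $e_i\notin B$.

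\textbf{Step 2 (the product formula).} By the Markov property at the successive $\rho_k$, and because the density is a martingale, the probability of the event "$n$ returns with allowed excursions, then escape along $v_i$" equals
\[
\Big(\P_0\big((I_1,J_1)\notin A\big)\Big)^n\cdot\frac{1}{4}\cdot\frac{h^A(v_i,v_i)\,\P^{\updownarrow}_{v_i}\text{-mass}}{h^A(0,0)} .
\]
The factor $(1-p_A)^n$ arises because, on $\{\rho_k<\infty\}$, the $h$-ratio returns to $h^A(0,0)$. The escape factor summed over $i$ is a constant $c$ independent of $n$. Hence $\P_{0,0}^{A\updownarrow}(L_\infty=n)=c(1-p_A)^n$. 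Then
\[
\P_{0,0}^{A\updownarrow}(L_\infty<\infty)=c/p_A .
\]
The fact that the total mass of $\P^{A\updownarrow}_{0,0}$ is one does not immediately force $L_\infty<\infty$, so I need to show $c=p_A$ directly. Equivalently, I show $\P^{A\updownarrow}_{0,0}(\rho_0'=\infty)=p_A$, where $\rho_0'$ denotes the first return time to $0$. This is the harmonicity identity at $0$:
\[
h^A(0,0)=\sum_i\tfrac14\Big[h^A(v_i,v_i)\text{-escape part}+\P_{v_i}\big(\tau<\infty,(i,J)\notin A\big)\,h^A(0,0)\Big].
\]
Using Step 1, it reduces to the explicit value of $\lim p_AG_0(q)\P_0(\tau\ge T)\cdot(\ldots)$. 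For $B$, the same argument gives $\kappa_B(1-\kappa_B)^n$, with the escape restricted to the $4-|B|$ edges not in $B$. This requires $|B|<4$ so that escape is possible.

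\textbf{Main obstacle.} The main difficulty is the normalisation in Step 1, namely identifying the escape constant exactly as $p_A$ (respectively $\kappa_B$). This requires a careful limit of $p_AG_0(q)\big(1-\E_0[q^\tau;A^c]\big)^{-1}\cdot\P(\text{no return before }T)$. I would handle it by writing $1-\E_0[q^\tau;A^c]=(1-\E_0 q^\tau)+\E_0[q^\tau;A]$ and using $G_0(q)(1-\E_0q^\tau)=1$. The ratio then tends to $1/(1+0)$ after dividing by $p_A G_0(q)\to\infty$, and this makes the constant come out to exactly $p_A$.
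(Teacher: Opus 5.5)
Your skeleton is the paper's proof of Propositions \ref{thm2.10} and \ref{thm3.11}. Your $h^A$ is exactly $\widetilde h^A(x,v)=1+p_Aa(x)-r^A(x,v)$, which is Theorem \ref{thm1.1} with $f=1_{\{0\}}$, so you can read its values off rather than recompute them with the geometric clock. Optional stopping at $\tau\wedge m$ gives $\P_{0,0}^{A\updownarrow}[G,\ \tau<\infty]=\P_0[G,\ (I_1,J_1)\notin A]$, since the density at $\tau$ equals $1_{\{(I_1,J_1)\notin A\}}$. The Markov property at $\rho_1,\dots,\rho_n$ then produces the factor $(1-p_A)^n$.

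You differ from the paper only in how you fix the escape constant $c=\P_{0,0}^{A\updownarrow}(\tau=\infty)$. The paper computes it directly. On $\{\tau>m,\ S_1=v_i\}$ it splits the density into two terms. The first is $p_Aa(S_{m-1})$, the martingale defining $\P_{v_i}^\updownarrow$, which contributes $p_Aa(v_i)/4=p_A/4$. The second is the bounded term $1-r^A$, whose contribution is at most $\P_{v_i}(\tau>m-1)\to0$. This extra work also identifies the post-escape law as $\P_{v_i}^\updownarrow$, which the full Proposition \ref{thm2.10} needs.

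Your harmonicity identity at the origin gives $c$ by subtraction instead:
\[
c=1-\tfrac14\sum_i\bigl(1-r^A(v_i,v_i)\bigr)=1-(1-p_A)=p_A .
\]
This is valid and suffices for the geometric law. For $B$ the same subtraction gives $\P_0^{B\Updownarrow}(\tau=\infty)=1-\P_0(N_\tau^B=0)=\kappa_B$. There, $|B|<4$ is used only to ensure $H_0^B(0)=(4-|B|)/(4\kappa_B)>0$, so that $\P_0^{B\Updownarrow}$ exists.

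Several of your side claims are wrong, although none is load-bearing, and you should drop them.
\begin{enumerate}
\item $h^A(v_i,v_i)=1+p_A-r^A(v_i,v_i)$ is not a common multiple of $a(v_i)=1$. It depends on $i$ in general: for $A=\{(1,1)\}$ it equals $\tfrac12+p_A$ at $i=1$ but $1+p_A$ at $i=2$. Its return part $1-r^A(v_i,v_i)$ is of order one, not negligible. The escape factor in your Step 2 display must therefore use only $p_Aa(v_i)$, not $h^A(v_i,v_i)$.
\item Your remark that total mass one does not force $L_\infty<\infty$ is mistaken. You already have $\P_{0,0}^{A\updownarrow}(L_\infty\ge n)=\P_{0,0}^{A\updownarrow}(\rho_n<\infty)=(1-p_A)^n\to0$. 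Then $\sum_n c(1-p_A)^n=1$ forces $c=p_A$. Your harmonicity identity is just this total-mass identity in disguise.
\item The ``main obstacle'' is illusory. The value of $h^A(0,0)$ cancels in the density ratio at every return time, so no geometric-clock normalisation has to be computed at all.
\end{enumerate}
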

This theorem seems to be non-trivial. 
Under the assumptions on $A$ and $B$, the simple random walk conditioned to avoid $A$ (or $B$) may return to the origin infinitely often while avoiding $A$ (or $B$). 
However, this theorem states that the conditioned walks return to the origin only a finite number of times. 
This phenomenon cannot be demonstrated by observing $\P_x^\updownarrow$ alone. 
It can only be demonstrated by observing $\P_{0,0}^{A\updownarrow}$ or $\P_0^{B\Updownarrow}$. }

\subsubsection{Universal $\sigma$-Finite Measure}
In Theorems \ref{thm1.1} and \ref{thm1.2}, we considered penalization problems with different path functionals as weights, and at first sight obtained different penalized measures. One of the main conclusions of this paper, however, is that these two types of penalized measures can in fact be represented in a unified way in terms of the same $\sigma$-finite measure.

\begin{thm}
\label{thm1.3}
There exists a $\sigma$-finite measure $\widetilde{\mathscr{Q}}_x$ on $\F_\infty$ such that
\begin{align}
\Q_x^{f,A\circlearrowleft}=\frac{p_Af(L_\infty^{A\circlearrowleft})}{h_f^A(x,0,0)}\cdot \widetilde{\mathscr{Q}}_x\qquad \text{on}\ \F_\infty.
\end{align}
Moreover, for every $x$ satisfying $H_f^B(x,0)>0$,
\begin{align}
\Q_x^{f,B\leadsto}=\frac{f(N_\infty^B)}{H_f^B(x,0)}\cdot \widetilde{\mathscr{Q}}_x\qquad \text{on}\ \F_\infty.
\end{align}
\end{thm}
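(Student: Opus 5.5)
We take $\widetilde{\mathscr{Q}}_x$ to be the Najnudel--Roynette--Yor type universal measure for the local time at the origin. The natural definition is
\begin{align}
\widetilde{\mathscr{Q}}_x:=\P_x|_{\{L_\infty\text{-stopped}\}}\ \text{``}\circ\text{''}\ \P^\updownarrow,
\end{align}
made precise as follows. For every $n\ge0$ and $\Lambda_n\in\F_n$,
\begin{align}
\widetilde{\mathscr{Q}}_x(\Lambda_n\cap\{g\le n\})=\P_x\Big[1_{\Lambda_n}\,\frac{a(S_n)}{?}\,1_{\{\ldots\}}\Big],
\end{align}
where $g:=\sup\{k:S_k=0\}$ is the last exit time from the origin. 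Equivalently, $\widetilde{\mathscr{Q}}_x$ is the measure under which the path up to the $k$-th return to $0$ is an ordinary walk, for $k$ ranging over all of $\Z_{\ge0}$ with counting (Lebesgue) weight, and after the last return the path follows $\P_{v_i}^\updownarrow$ from a uniform neighbour. Concretely,
\begin{align}
\widetilde{\mathscr{Q}}_x[F]=\sum_{k\ge0}\P_x\Big[F\big(S_{[0,\rho_k]}\oplus \text{step to }v_U\oplus \P^\updownarrow_{v_U}\big)\Big]\cdot c,
\end{align}
with a normalizing constant $c$ fixed by consistency, plus, for $x\ne0$, the term $a(x)\P_x^\updownarrow$ coming from paths that never hit $0$.

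The first step is to check that $\widetilde{\mathscr{Q}}_x$ is $\sigma$-finite. The sets $\{L_\infty\le m\}$ exhaust the mass, and each has finite measure.

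The second step computes $\widetilde{\mathscr{Q}}_x[F_n\cdot \phi(L_\infty^{A\circlearrowleft})]$ for $F_n$ bounded and $\F_n$-measurable, by splitting on whether $g\le n$ or $g>n$. On $\{g>n\}$ we use the Markov property at $n$: the future is a $\widetilde{\mathscr{Q}}_{S_n}$-type piece carrying the state $(S_n,L_n^{A\circlearrowleft},V_n)$. This yields a formula $\widetilde{\mathscr{Q}}_x[F_n\,\phi(L^{A\circlearrowleft}_\infty)]=\P_x[F_n\,\Phi(S_n,L^{A\circlearrowleft}_n,V_n)]$, and we then identify $p_A\Phi$ with $h_f^A$ when $\phi=f$. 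This identification is done by comparing with the explicit formula (\ref{Def-h_f^A}), or more robustly by using the path decomposition of Proposition \ref{thm2.9} and Propositions \ref{thm2.10}--\ref{thm2.12}. The point is that conditioned on $L^{A\circlearrowleft}_\infty=k$, the $\Q$-law is (walk to $\sigma_k$)$\oplus\P^{A\updownarrow}_{0,0}$. Meanwhile $\P^{A\updownarrow}_{0,0}$ is itself geometric($p_A$)-many $A^c$-excursions followed by the escape; concatenating, we recover exactly the $\widetilde{\mathscr{Q}}_x$ description restricted to $\{L^{A\circlearrowleft}_\infty=k\}$. It then remains to match the total masses $\Q_x^{f,A\circlearrowleft}(L^{A\circlearrowleft}_\infty=k)=p_Af(k)\widetilde{\mathscr{Q}}_x(L^{A\circlearrowleft}_\infty=k)/h_f^A(x,0,0)$. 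On the $\widetilde{\mathscr{Q}}$ side, $\widetilde{\mathscr{Q}}_x(L_\infty^{A\circlearrowleft}=k)$ sums over the number of total returns, and the geometric factor $(1-p_A)^{j}$ summed over $j$ gives $1/p_A$, which explains the prefactor $p_A$.

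The third step repeats the same comparison for $N^B$ using Propositions \ref{thm3.10}, \ref{thm3.11} and \ref{3.13}. Conditioning on $N^B_\infty=k$, $S_{\varsigma_k}=y$ gives the walk up to $\varsigma_k$ followed by $\P^{B\Updownarrow}_y$. Decomposing the latter into an excursion to $0$ avoiding $B$, then geometric($\kappa_B$) $B$-free excursions, then a final exit along an edge uniform in $B^c$, reproduces $\widetilde{\mathscr{Q}}_x$ restricted to the event. The exit being uniform over $B^c$ rather than over all four edges must be reconciled with the uniform exit in $\widetilde{\mathscr{Q}}_x$: exits along $B$-edges contribute to $N^B_\infty$ and are thus counted under a different value of $k$. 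The case $|B|=4$ is handled via Proposition \ref{3.13}.

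I expect the main obstacle to be the bookkeeping of the normalizing constants: the factor $p_A$ versus its absence for $B$, and the contribution $a(x)\P_x^\updownarrow$ of the paths that never hit $0$. I would fix $c$ first by comparing with the classical local-time case $A=\{1,2,3,4\}^2$, where $L^{A\circlearrowleft}=L$ and $p_A=1$. I would then verify the $B$ case against that normalization through the limit formulas (\ref{PxvfLTA-lim}) and (\ref{thm1.2-eq1}) applied with $f=1_{\{k\}}$.
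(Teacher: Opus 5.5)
Your argument is correct in outline, but it follows a genuinely different route from the paper.

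\textbf{The paper's route.} The paper never describes $\widetilde{\mathscr{Q}}_x$ pathwise. It imports the Najnudel--Roynette--Yor measure $\widetilde{\mathscr{Q}}_x=(a(x)+\frac{r}{1-r})r^{-\widetilde{L}_\infty}\widetilde{\Q}_x$, taking its $r$-independence and $\sigma$-finiteness from their Theorem 4.1.3. It then forms the unweighted measures $\mathscr{Q}_x^{A\circlearrowleft}=(1+p_Aa(x))\Q_x^{(0),A\circlearrowleft}+\sum_{k\ge1}\Q_x^{(k),A\circlearrowleft}$ and $\mathscr{Q}_x^{B\leadsto}=\sum_k H_k^B(x)\Q_x^{[k],B\leadsto}$. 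For $f=f_r$ it writes down the density of $h^A_{f_r}(x,0,0)\Q_x^{f_r,A\circlearrowleft}$ with respect to $\widetilde{\Q}_x$ on $\F_n$. Transience under both measures and $a(S_n)\to\infty$ make this density converge to $p_A\,r^{L_\infty^{A\circlearrowleft}-\widetilde{L}_\infty}(a(x)+\frac{r}{1-r})$, and Durrett's Theorem 4.3.5 lifts the identity to $\F_\infty$. For $B$ the only new input is $H^B_{f_r}(x,0)/a(x)\to1$ as $|x|\to\infty$. Beyond the mixture representations and transience, that proof uses only the asymptotics of $h_f^A$ and $H_f^B$ at infinity, and it identifies the measure with NRY's directly.

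\textbf{Your route.} You define the measure explicitly, $\widetilde{\mathscr{Q}}_x=1_{\{x\neq0\}}a(x)\P_x^\updownarrow+\sum_{k\ge0}(\P_x\text{ up to }\rho_k)\oplus(\text{uniform step to }v_U)\oplus\P^\updownarrow_{v_U}$. You then match it piece by piece against Propositions \ref{thm2.9}--\ref{thm2.12}, \ref{thm3.10}, \ref{thm3.11} and \ref{3.13}. This is self-contained: no NRY theorem and no Lebesgue decomposition are needed. It makes $\sigma$-finiteness and the origin of the factor $p_A$ transparent, and it suffices for the existence claim. The price is exact bookkeeping. You also need a short comparison with the local-time case (e.g.\ via Theorem \ref{thmA.3}) if you want your measure to be NRY's.

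\textbf{The bookkeeping.} The constant is $c=1$. It is already forced by the piece $\{L_\infty^{A\circlearrowleft}=0\}$ with $x\neq0$, since Proposition \ref{thm2.12} gives $\frac{1+p_Aa(x)}{p_A}\P^{A\updownarrow}_{x,0}=a(x)\P_x^\updownarrow+\frac{1}{p_A}\P_x|_{\F_\tau}\oplus\P^{A\updownarrow}_{0,0}$. Correspondingly, your placeholder display should read $\widetilde{\mathscr{Q}}_x(\Lambda_n\cap\{S_m\neq0\ \text{for all}\ m>n\})=\P_x[1_{\Lambda_n}(a(S_n)+1_{\{S_n=0\}})]$.

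In the $B$ case, your description of $\P^{B\Updownarrow}_y$ omits its $\{\tau=\infty\}$ branch. For $y\neq0$ this branch carries $H_0^B(y)\P^{B\Updownarrow}_y[\,\cdot\,;\tau=\infty]=a(y)\P_y^\updownarrow$ by Proposition \ref{thm3.11}. It is exactly where the $\widetilde{\mathscr{Q}}_x$-paths whose final exit crosses a $B$-edge go; that exit is the $k$-th crossing, with $S_{\varsigma_k}=v\in\widetilde{B}$. The weights agree because $a(v)=1$, and the same identity handles $B_\ast$ via Proposition \ref{3.13}.

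The remaining exits go to $\{v_1,\dots,v_4\}\setminus\widetilde{B}$, each with weight $\frac14$. They agree with Proposition \ref{thm3.11} because $H_0^B(0)=\frac{4-|B|}{4\kappa_B}$, i.e.\ $H_0^B(0)\kappa_B/(4-|B|)=\frac14$. Your comparison closes once you combine these identities with the a.s.\ finiteness of $L_\infty^{A\circlearrowleft}$ and $N_\infty^B$ under the penalized measures and a monotone class argument on the finite-mass pieces.
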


The proof of this theorem is given in Section \ref{S4}, and the measure $\widetilde{\mathscr{Q}}_x$ is defined in (\ref{NRV-universal}).

The measure $\widetilde{\mathscr{Q}}_x$ is the universal $\sigma$-finite measure arising from the penalization problem associated with the local time at the origin. Consequently, the penalizations considered in this paper, namely those based on direction-dependent excursion counts, edge-crossing counts, and the local time at the origin, can all be represented in terms of the same measure $\widetilde{\mathscr{Q}}_x$.

More generally, when penalized measures corresponding to different weights can be represented by weighting and normalizing the same universal $\sigma$-finite measure, the corresponding weights are said to belong to the same \emph{universality class}. Therefore, the three types of penalization considered in this paper belong to the same universality class.

\subsubsection*{Organization}
The remainder of this paper is organized as follows. Section \ref{S2} is devoted to penalization by direction-dependent excursion counts. We establish the geometric-clock penalization limit and derive path decomposition formulas for the corresponding penalized process. Section \ref{S3} develops the analogous results for penalization by edge-crossing counts. In Section \ref{S4}, we show that both families of penalized measures can be represented in terms of the same universal $\sigma$-finite measure associated with local-time penalization at the origin. Appendix \ref{AppendixA} presents the corresponding results for penalization by local time at the origin and compares them with special cases of excursion-count and edge-crossing-count penalizations. Appendix \ref{AppendixC} shows that the constant-clock and geometric-clock penalization limits coincide when the weight function is nonincreasing or has finite support. Finally, Appendix \ref{AppendixB} provides explicit values of the constants appearing in the paper.


\section{Penalization by Direction-Dependent Excursion Counts}
\label{S2}
In this section, we study the penalization problem with weights of the form $\Gamma=f(L^{A\circlearrowleft})$ and derive a path decomposition for the penalized process.

\subsection{Penalization Limit}
\label{S2.1}
In this subsection, we prove Theorem \ref{thm1.1}.

Let $F_x$ denote the generating function of $\tau$ under $\mathbb P_x$, and let $F^A$ denote the generating function of the excursion length restricted to the event that the excursion is an $A$-excursion. More precisely, we define
\begin{align}
F_x(\xi)&:=\P_x[\xi^\tau]=\sum_{n=1}^\infty \mathbb{P}_x(\tau=n)\xi^n\qquad \text{for}\ 0\le \xi\le 1\ \text{and}\ x\in \Z^2,\\
  F^{A}(\xi)&:=\P_0\Big[\xi^{\tau},\ (I_1,J_1)\in A\Big]=\sum_{n=1}^\infty \P_0\Big(\tau=n,\ (I_1,J_1)\in A\Big)\xi^n\qquad \text{for}\ 0\le \xi\le 1.
 \end{align}
Note that $F^A(1)=p_A>0$, since $A$ is nonempty. Set $\sigma_0:=\rho_0$. {In addition, note that $|F_0(\xi)-F^{A}(\xi)| < 1$ for any $\xi \in [0, 1]$. } For $k\ge 1$, let $\sigma_k$ denote the completion time of the $k$-th $A$-excursion:
\begin{align}
  \sigma_k:=\inf\Big\{\rho_m>\sigma_{k-1}:\ (I_m,J_m)\in A\Big\}.
\end{align}

We first determine the distribution of $L_T^{A\circlearrowleft}$.

\begin{lem}
\label{lem2.1}
The generating function of $\sigma_1$ under $\P_0$ is given by
\begin{align}
\label{lem2.1-eq2}
 \P_0[\xi^{\sigma_1}]=\frac{F^{A}(\xi)}{1-F_0(\xi)+F^{A}(\xi)}=:\widetilde{F}^A(\xi)\qquad \text{for}\ 0\le \xi\le 1.
\end{align}
Moreover, we have
 \begin{align}
 \label{lem2.1-eq3}
   \P_0(L_T^{A\circlearrowleft}=0)&=1-\frac{\widetilde{F}^A(q)}{q},\\
   \label{lem2.1-eq4}
\P_0(L_T^{A\circlearrowleft}=k)&=\frac{1-\widetilde{F}^A(q)}{q}\cdot \widetilde{F}^A(q)^{k} \qquad \text{for}\ k\ge 1.
 \end{align}
If $x\neq 0$, then we have
\begin{align}
\label{lem2.1-eq5}
\P_x(L_T^{A\circlearrowleft}=0)&=1-\frac{F_x(q)\widetilde{F}^A(q)}{q},\\
\label{lem2.1-eq6}
\P_x(L_T^{A\circlearrowleft}=k)&=\frac{1-\widetilde{F}^A(q)}{q}\cdot F_x(q)\widetilde{F}^A(q)^{k} \qquad \text{for}\ k\ge 1.
\end{align}
\end{lem}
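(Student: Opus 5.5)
Under $\P_0$, the excursions are i.i.d. by the strong Markov property at the times $\rho_k$. Each excursion $(\rho_{k-1},\rho_k]$ has length generating function $F_0(\xi)$. Restricted to being an $A$-excursion, its generating function is $F^A(\xi)$; restricted to being a non-$A$-excursion, it is $F_0(\xi)-F^A(\xi)$. Since $\rho_0=0$ under $\P_0$, the time $\sigma_1$ is the sum of $m\ge 0$ lengths of non-$A$-excursions followed by one $A$-excursion. Summing over $m$ gives
\begin{align}
\P_0[\xi^{\sigma_1}]=\sum_{m\ge 0}\big(F_0(\xi)-F^A(\xi)\big)^m F^A(\xi)=\frac{F^A(\xi)}{1-F_0(\xi)+F^A(\xi)}.
\end{align}
The geometric series converges because $|F_0(\xi)-F^A(\xi)|<1$, as noted just before the statement. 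At $\xi=1$ one uses recurrence, $F_0(1)=1$, which yields $\widetilde F^A(1)=1$.

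Next apply the strong Markov property at the successive $\sigma_k$. Under $\P_0$, the increments $\sigma_k-\sigma_{k-1}$ are i.i.d. with generating function $\widetilde F^A$, so $\P_0[\xi^{\sigma_k}]=\widetilde F^A(\xi)^k$. Under $\P_x$ with $x\neq0$, we have $\sigma_0=\rho_0=\tau$, and the first completed excursion starts at $\tau$. Hence $\P_x[\xi^{\sigma_k}]=F_x(\xi)\widetilde F^A(\xi)^k$ for $k\ge1$. Note also that $\{L_T^{A\circlearrowleft}\ge k\}=\{\sigma_k\le T\}$.

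Since $T$ is independent of $S$ with $\P(T\ge m)=q^{m-1}$, we get
\begin{align}
\P(\sigma_k\le T)=\P\big[q^{\sigma_k-1}\big]=q^{-1}\P[q^{\sigma_k}]
\end{align}
for $k\ge1$, using $\sigma_k\ge1$. Then $\P(L_T^{A\circlearrowleft}=k)=\P(\sigma_k\le T)-\P(\sigma_{k+1}\le T)$. For $k\ge1$ under $\P_0$ this equals $q^{-1}\widetilde F^A(q)^k\big(1-\widetilde F^A(q)\big)$. For $k=0$ it equals $1-q^{-1}\widetilde F^A(q)$. The case $x\neq0$ is identical, with the extra factor $F_x(q)$ in every term of the form $\P_x[q^{\sigma_k}]$.

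No step is a genuine obstacle. The only points needing care are the convention $\rho_0=0$ under $\P_0$ versus $\rho_0=\tau$ under $\P_x$, and the off-by-one in $\P(T\ge m)=q^{m-1}$. The latter is what produces the factor $1/q$, and it requires $\sigma_k\ge1$, which holds for $k\ge 1$.
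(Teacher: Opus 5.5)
Your proposal is correct and follows essentially the same route as the paper. Both arguments use the same four ingredients: the geometric-series decomposition of $\sigma_1$ over the preceding non-$A$-excursions; the i.i.d.\ increments $\sigma_k-\sigma_{k-1}$, which give $\P_0[\xi^{\sigma_k}]=\widetilde{F}^A(\xi)^k$; the identity $\P(\sigma_k\le T)=q^{-1}\P[q^{\sigma_k}]$, which follows from $\P(T\ge m)=q^{m-1}$; and, when $x\neq 0$, the extra factor $F_x(q)$ coming from the strong Markov property at $\tau$.
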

\begin{proof}
By the strong Markov property, we have
 \begin{align*}
\P_0[\xi^{\sigma_1}]&=\sum_{m=1}^\infty \P_0\Big[\xi^{\sigma_1},\ \sigma_1=\rho_m \Big]=\sum_{m=1}^\infty \left(\P_0\Big[\xi^{\rho_1},\ \sigma_1\neq \rho_1\Big]\right)^{m-1}\P_0\Big[\xi^{\rho_1},\ \sigma_1=\rho_1\Big]\\
&=\sum_{m=1}^\infty \Big(F_0(\xi)-F^{A}(\xi)\Big)^{m-1}F^{A}(\xi)=\frac{F^{A}(\xi)}{1-F_0(\xi)+F^{A}(\xi)}
\stepcounter{equation}\tag{\theequation}
\end{align*}
for $0\le \xi\le 1.$ This proves (\ref{lem2.1-eq2}).

By the strong Markov property, the increments $(\sigma_k-\sigma_{k-1})_{k\ge1}$ are independent and identically distributed under $\P_0$. Thus, for $k\ge 1$, we have
{\begin{align*}
\label{lem2.1-eq1}
\P_0(L_T^{A\circlearrowleft}\ge k)&=\P_0(\sigma_k\le T)=\sum_{m=1}^\infty \P_0(m \le T,\ \sigma_k = m) = \sum_{m=1}^{\infty} \P_0(m \le T) \P_0(\sigma_k = m) \\
&= \sum_{m=1}^{\infty} q^{m-1}\P_0(\sigma_k = m) = \frac{1}{q}\P_0[q^{\sigma_k}]
=\frac{1}{q}\Big(\P_0[q^{\sigma_1}]\Big)^k=\frac{1}{q}\widetilde{F}^A(q)^k.
\stepcounter{equation}\tag{\theequation}
\end{align*}}
Therefore, we obtain
 \begin{align}
   \P_0(L_T^{A\circlearrowleft}=0)&=1-\P_0(L_T^{A\circlearrowleft}\ge 1)=1-\frac{1}{q}\widetilde{F}^A(q),\\
\P_0(L_T^{A\circlearrowleft}=k)&=\P_0(L_T^{A\circlearrowleft}\ge k)-\P_0(L_T^{A\circlearrowleft}\ge k+1)=\frac{1-\widetilde{F}^A(q)}{q}\cdot\widetilde{F}^A(q)^k\qquad \text{for}\ k\ge 1.
 \end{align}
This proves (\ref{lem2.1-eq3}) and (\ref{lem2.1-eq4}).

For $x\neq 0$, the strong Markov property at $\tau$ gives
\begin{align}
\P_x[q^{\sigma_k}]&=\P_x[q^{\tau}]\Big(\P_0[q^{\sigma_1}]\Big)^k=F_x(q)\widetilde{F}^A(q)^k\qquad \text{for}\ k\ge 1.
 \end{align}
Equations (\ref{lem2.1-eq5}) and (\ref{lem2.1-eq6}) now follow by the same argument as above. This completes the proof.
\end{proof}

The generating functions $G_0$ and $F_0$ satisfy
\begin{align}
\label{G_0F_0-identity}
  G_0(\xi)=\frac{1}{1-F_0(\xi)}\qquad \text{for}\ 0\le \xi<1
\end{align}
(see, e.g., Proposition 4.2.2 of \cite{Lawler-Limic}). Moreover, for $x\neq 0$, we have
\begin{align*}
\label{GxFx-identity}
G_x(\xi)&=\sum_{n=0}^\infty \P_x(S_n=0)\xi^n=\sum_{n=0}^\infty\sum_{k=1}^n \P_x(\tau=k)\P_0(S_{n-k}=0)\xi^n\\
&=\sum_{k=1}^\infty\P_x(\tau=k)\xi^k\cdot \sum_{n=k}^\infty \P_0(S_{n-k}=0)\xi^{n-k}=F_x(\xi)\cdot G_0(\xi).
\stepcounter{equation}\tag{\theequation}
\end{align*}

Using these relations, we establish the following limit, which is the special case $l=0$ and $v=0$ of (\ref{PxvfLTA-lim}).

\begin{lem}
For $x\in \Z^2$, we have
 \begin{align}
 \label{lem2.2-eq3}
   \lim_{q\to 1-}p_AG_0(q)\P_x\Big[f(L_T^{A\circlearrowleft})\Big]&=\Big(p_Aa(x)+1\Big)f(0)+\sum_{k=1}^\infty f(k).
 \end{align}
\end{lem}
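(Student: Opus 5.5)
We expand the expectation with the exact distribution from Lemma \ref{lem2.1},
\begin{align*}
\P_x\Big[f(L_T^{A\circlearrowleft})\Big]=f(0)\P_x(L_T^{A\circlearrowleft}=0)+\sum_{k\ge1}f(k)\P_x(L_T^{A\circlearrowleft}=k),
\end{align*}
and identify the limit of each piece after multiplying by $p_AG_0(q)$. It is convenient to write $F_0(0)$-type quantities via (\ref{G_0F_0-identity}): $1-F_0(q)=1/G_0(q)$, so
\begin{align*}
1-\widetilde F^A(q)=\frac{1-F_0(q)}{1-F_0(q)+F^A(q)},\qquad p_AG_0(q)\big(1-\widetilde F^A(q)\big)=\frac{p_A}{1-F_0(q)+F^A(q)}.
\end{align*}
Since the two-dimensional walk is recurrent, $F_0(q)\to1$ and $G_0(q)\to\infty$ as $q\to1-$, while $F^A(q)\to p_A>0$. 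Hence $p_AG_0(q)(1-\widetilde F^A(q))\to1$ and $\widetilde F^A(q)\to1$.

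\textbf{Terms $k\ge1$.} For $x=0$ these equal $p_AG_0(q)\frac{1-\widetilde F^A(q)}{q}\widetilde F^A(q)^k f(k)$, and for $x\ne0$ there is an extra factor $F_x(q)$. Here $F_x(q)\to\P_x(\tau<\infty)=1$ by recurrence. Each term converges to $f(k)$. The prefactor is bounded uniformly in $q$ near $1$ and $\widetilde F^A(q)^k\le1$, $F_x\le1$. Together with $\sum f(k)<\infty$, dominated convergence gives $\sum_{k\ge1}f(k)$ in the limit.

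\textbf{Term $k=0$.} This is the main point, and we must show
\begin{align*}
p_AG_0(q)\Big(1-\frac{F_x(q)\widetilde F^A(q)}{q}\Big)\to p_Aa(x)+1,
\end{align*}
where we take $F_0:=1$ in the formula when $x=0$. We split
\begin{align*}
1-\frac{F_x\widetilde F^A}{q}=\Big(1-\frac1q\Big)+\frac{1-F_x(q)}{q}+\frac{F_x(q)\big(1-\widetilde F^A(q)\big)}{q}.
\end{align*}
The last piece contributes $1$ by the computation above.

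For the first piece, $G_0(q)(1-q)\to0$ because $G_0(q)\sim\frac1\pi\log\frac1{1-q}$. More elementarily, $(1-q)G_0(q)=(1-q)\sum_n\P_0(S_n=0)q^n\to0$ by Abelian arguments, since $\P_0(S_n=0)\to0$.

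For the middle piece, we use (\ref{GxFx-identity}): $G_0(q)(1-F_x(q))=G_0(q)-G_x(q)\to a(x)$ by (\ref{a-definition}). This also holds trivially for $x=0$, since then $a(0)=0$.

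Summing the pieces yields $(p_Aa(x)+1)f(0)+\sum_{k\ge1}f(k)$. The only delicate ingredients are the boundedness needed for dominated convergence and the vanishing of $(1-q)G_0(q)$, both routine.
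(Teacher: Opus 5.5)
Your proof is correct and follows essentially the same route as the paper: both expand via Lemma \ref{lem2.1}, use $G_0(1-F_0)=1$, $G_x=F_xG_0$, $(1-q)G_0(q)\to0$ and the definition of $a$, and apply dominated convergence to the series. Your three-piece split of the $k=0$ coefficient is only a rearrangement of the paper's closed form (\ref{lem2.2-eq2}); one cosmetic fix is that writing ``$F_0:=1$'' for $x=0$ clashes with $F_0(q)=\P_0[q^\tau]$ used a few lines earlier, so it is cleaner to say that the factor $F_x(q)$ in (\ref{lem2.1-eq5}) is simply absent in (\ref{lem2.1-eq3}).
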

\begin{proof}
By (\ref{G_0F_0-identity}), we have
\begin{align}
\label{tildeFA-identity}
  \widetilde{F}^A(q)=\frac{F^{A}(q)}{1-F_0(q)+F^{A}(q)}=\frac{G_0(q)F^{A}(q)}{1+G_0(q)F^{A}(q)}.
\end{align}
Thus, by Lemma \ref{lem2.1}, we have
\begin{align*}
\label{lem2.2-eq1}
G_0(q)\P_0\Big[f(L_T^{A\circlearrowleft})\Big]&=G_0(q)\P_0(L_T^{A\circlearrowleft}=0)f(0)+G_0(q)\sum_{k=1}^\infty\P_0(L_T^{A\circlearrowleft}=k)f(k)\\ 
&=G_0(q)\left(1-\frac{\widetilde{F}^A(q)}{q}\right)f(0)+G_0(q)\cdot\frac{1-\widetilde{F}^A(q)}{q}\sum_{k=1}^\infty f(k)\widetilde{F}^A(q)^{k}\\
   &=\frac{q+(q-1)G_0(q)F^{A}(q)}{q(\frac{1}{G_0(q)}+F^{A}(q))}\cdot f(0)+\frac{1}{q(\frac{1}{G_0(q)}+F^{A}(q))}\sum_{k=1}^\infty f(k)\widetilde{F}^A(q)^{k}.
     \stepcounter{equation}\tag{\theequation}
\end{align*}
By (\ref{GxFx-identity}), we have
\begin{align*}
\label{lem2.2-eq2}
G_0(q)\P_x\Big[f(L_T^{A\circlearrowleft})\Big]&=\frac{q+F^A(q)\{q(G_0(q)-G_x(q))+(q-1)G_x(q)\}}{q(\frac{1}{G_0(q)}+F^A(q))}\cdot f(0)\\
&\qquad +\frac{F_x(q)}{q(\frac{1}{G_0(q)}+F^A(q))}\sum_{k=1}^\infty f(k)\widetilde{F}^A(q)^k
\stepcounter{equation}\tag{\theequation}
\end{align*}
for $x\neq 0$.

The asymptotic relation
 \begin{align}
 \label{G-log(1-q)}
   G_0(q)\sim \frac{1}{\pi}\log \frac{1}{1-q}\qquad \text{as}\ q\to 1-
 \end{align}
(see, e.g., p. 80 of \cite{Lawler-Limic}) implies that
\begin{align}
   \label{G_0(q)sim}
  \lim_{q\to 1-}G_0(q)=\infty,\qquad   \lim_{q\to 1-}(1-q)G_0(q)=0.
\end{align}
Note that
\begin{align}
   \lim_{q\to 1-}F^A(q)=p_A,\qquad \lim_{q\to 1-}\widetilde{F}^A(q)=1, {\qquad \lim_{q\to 1-} F_x(q) = 1}.
\end{align}
Since $0\le\widetilde{F}^A(q)\le 1$ and $f$ is summable, the dominated convergence theorem yields
\begin{align}
\lim_{q\to 1-}\sum_{k=1}^{\infty}f(k)\widetilde{F}^A(q)^k=\sum_{k=1}^{\infty}f(k).
\end{align}
Using (\ref{a-definition}) and letting $q\to 1-$ in (\ref{lem2.2-eq1}) and (\ref{lem2.2-eq2}), we obtain
\begin{align}
  \lim_{q\to 1-}
G_0(q)\P_x\Big[f(L_T^{A\circlearrowleft})\Big]=\left(a(x)+\frac{1}{p_A}\right)f(0)+\frac{1}{p_A}\sum_{k=1}^{\infty}f(k)\qquad \text{for}\ x\in \Z^2.
\end{align}
Multiplying both sides by $p_A$ proves the assertion.
\end{proof}

For $x\neq 0$ and $v=v_i$, let $r^A(x,v_i)$ denote the probability that the excursion currently in progress will be completed as an $A$-excursion. We set $r^A(x,v)=0$ when $x=0$ or $v=0$. More precisely, we define
    \begin{align}
      r^A(x,v)&:=\begin{cases}
       \P_x\Big(S_{\tau-1}=v_j\ \text{for\ some }j\ \text{with}\ (i,j)\in A\Big)&\text{if}\ x\neq0,\ v=v_i,\\[10pt]
        0&\text{if}\ x=0\ \text{or}\ v=0.
      \end{cases}
\end{align}
As in the derivation of (\ref{pij-identity}), this quantity can be expressed in terms of the potential kernel $a$:
\begin{align}
r^A(x,v_i)=\frac{1}{4}\sum_{\substack{1\le j\le 4\\(i,j)\in A}}\Big(a(x)+1-a(x-v_j)\Big)\qquad \text{for}\ x\neq 0.
\end{align}
For $(x,v)\in E$ and $l\in \Z_{\ge 0}$, we define
\begin{align}
\label{Def-h_f^A}
h_f^A(x,l,v)&:=\Big(p_Aa(x)-r^A(x,v)\Big)f(l)+\sum_{k=0}^\infty f(k+l).
\end{align}
Since $a(x)\ge 0$ and $0\le r^A(x,v)\le 1$, the function $h_f^A$ is nonnegative. Assumption (\ref{assumption-f}) implies that $h_f^A(x,0,0)>0$ for every $x\in \Z^2$. Moreover, (\ref{lem2.2-eq3}) can be rewritten as
\begin{align}
  \label{f(L_T^A)-limit-zero-h_f^A}
   \lim_{q\to 1-}p_AG_0(q)\P_x\Big[f(L_T^{A\circlearrowleft})\Big]=h_f^A(x,0,0)\qquad \text{for}\ x\in \Z^2.
\end{align}
This gives (\ref{PxvfLTA-lim}) for $v=0$ and $l=0$; we next extend (\ref{f(L_T^A)-limit-zero-h_f^A}) to arbitrary $(x,v)\in E$. The general case follows by applying the same result to $f(\cdot+l)$.

\begin{lem}
\label{lem2.3}
We have
 \begin{align}
 \label{lem2.3-eq1}
   \lim_{q\to 1-}p_AG_0(q)\P_{x,v}\Big[f(L_T^{A\circlearrowleft})\Big]&=h_f^A(x,0,v)\qquad \text{for}\ (x,v)\in E.
 \end{align}
\end{lem}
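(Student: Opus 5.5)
The case $v=0$ is exactly (\ref{f(L_T^A)-limit-zero-h_f^A}), since $\P_{x,0}=\P_x$ and $r^A(x,0)=0$. So we fix $x\neq 0$ and $v=v_i$. The plan is to decompose at $\rho_0=\tau$: the current excursion ends at $\tau$ and contributes a possible extra count; afterwards the walk is a fresh copy started at $0$. By the strong Markov property at $\tau$ and the memorylessness of the geometric clock, on $\{\tau\le T\}$ the residual clock $T-\tau$ has the law of $T-1$. Conditioning on $\tau$ and on whether $S_{\tau-1}=v_j$ with $(i,j)\in A$ (call this event $\mathcal A_i$) gives
\begin{align*}
\P_{x,v_i}\Big[f(L_T^{A\circlearrowleft})\Big]
&=f(0)\,\P_x(T<\tau)
+\P_x\big[q^{\tau-1}\cdots\big],
\end{align*}
where the second term is more precisely
\begin{align*}
\P_x\Big[q^{\tau-1}1_{\mathcal A_i}\Big]\,\Phi_1(q)+\P_x\Big[q^{\tau-1}1_{\mathcal A_i^c}\Big]\,\Phi_0(q),
\qquad \Phi_m(q):=\P_0\Big[f\big(m+L_{T'}^{A\circlearrowleft}\big)\Big],
\end{align*}
and $T'\overset{d}{=}T-1$ is a shifted clock on $\Z_{\ge0}$. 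Since $\P(T-1\ge m)=q^m$, the argument of Lemma \ref{lem2.1} gives $\P_0(L_{T'}^{A\circlearrowleft}\ge k)=\widetilde F^A(q)^k$. This is cleaner than the $T$ version, because there is no $1/q$ factor.

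Next we multiply by $p_AG_0(q)$ and let $q\to1-$, term by term.

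\emph{First term.} We have $\P_x(T<\tau)=1-\P_x[q^{\tau-1}]=1-F_x(q)/q$. Using (\ref{GxFx-identity}),
\begin{align*}
G_0(q)\big(1-F_x(q)/q\big)=\frac{q(G_0(q)-G_x(q))+(q-1)G_x(q)}{q}\to a(x),
\end{align*}
by (\ref{a-definition}) and (\ref{G_0(q)sim}), since $G_x\le G_0$. This term therefore tends to $p_Aa(x)f(0)$.

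\emph{Remaining terms.} The same computation as in Lemma 2.2 applies to the shifted clock. It shows that $p_AG_0(q)\Phi_m(q)$ converges, with limit $f(m)+\sum_{k\ge1}f(k+m)=\sum_{k\ge0}f(k+m)$ (the $a(0)=0$ case). For $m=0$ this is $\sum_{k\ge0}f(k)$, and for $m=1$ it is $\sum_{k\ge1}f(k)$. Explicitly, $G_0(q)(1-\widetilde F^A(q))=G_0/(1+G_0F^A)\to1/p_A$, and dominated convergence handles $\sum_k f(k+m)\widetilde F^A(q)^k$. The prefactors satisfy $\P_x[q^{\tau-1}1_{\mathcal A_i}]\to r^A(x,v_i)$ and $\P_x[q^{\tau-1}1_{\mathcal A_i^c}]\to1-r^A(x,v_i)$ by monotone convergence, using $\P_x(\tau<\infty)=1$ by recurrence.

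Summing the limits gives
\begin{align*}
p_Aa(x)f(0)+r^A\sum_{k\ge1}f(k)+(1-r^A)\sum_{k\ge0}f(k)
=\big(p_Aa(x)-r^A(x,v_i)\big)f(0)+\sum_{k\ge0}f(k),
\end{align*}
which is $h_f^A(x,0,v_i)$.

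The main obstacle is the bookkeeping of the clock shift at the renewal time $\tau$. One must check carefully that the post-$\tau$ count is evaluated at $T-\tau$ given $T\ge\tau$, and that counting an $A$-excursion completed exactly at time $T$ matches the convention $\{\sigma_k\le T\}$. This is why the post-$\tau$ functional uses a clock on $\Z_{\ge0}$, rather than reusing Lemma \ref{lem2.1} verbatim. A wrong shift would only introduce factors of $q^{\pm1}$, which are harmless in the limit, but the identity should be written carefully.
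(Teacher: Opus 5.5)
Your proof is correct and is essentially the paper's argument. You decompose at $\tau$ by the strong Markov property, evaluate the pre-$\tau$ term as $f(0)(1-F_x(q)/q)$ with $G_0(q)(1-F_x(q)/q)\to a(x)$, and combine the $x=0$ limit for the shifted functions $f(\cdot+m)$ with $\P_x[q^{\tau-1}1_{\mathcal A_i}]\to r^A(x,v_i)$. The differences are cosmetic: your clock $T'\overset{d}{=}T-1$ on $\Z_{\ge0}$ absorbs the boundary event $\{T=\tau\}$, which the paper writes out separately as $p\sum_j f(1_{\{(i,j)\in A\}})\P_x[q^{\tau-1},\ S_{\tau-1}=v_j]$ and discards using $(1-q)G_0(q)\to0$, and you recompute the $x=0$ limit directly rather than citing (\ref{lem2.2-eq3}).
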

\begin{proof}
Since the case $v=0$ {or $x = 0$} has already been established, it remains to consider $x\neq 0$ and $v=v_i$ for some fixed $i\in\{1,2,3,4\}$. We have
\begin{align*}
\P_{x,v_i}\Big[f(L_T^{A\circlearrowleft})\Big]&=\sum_{m=1}^\infty\P_{x,v_i}\Big[f(L_m^{A\circlearrowleft})\Big]pq^{m-1}\\
&=f(0)\sum_{m=1}^\infty\P_{x}(m< \tau)pq^{m-1}+\sum_{m=1}^\infty\P_{x,v_i}\Big[f(L_m^{A\circlearrowleft}),\ \tau\le m\Big]pq^{m-1}.
    \stepcounter{equation}\tag{\theequation}
\end{align*}
By the definition of $F_x$, we have
\begin{align*}
\sum_{m=1}^\infty\P_{x}(m< \tau)pq^{m-1}&=\sum_{m=1}^\infty\sum_{k=m+1}^{\infty}\P_{x}(\tau=k)pq^{m-1}=\sum_{k=2}^\infty\sum_{m=1}^{k-1} \P_{x}(\tau=k)pq^{m-1}\\
&=\sum_{k=1}^\infty\P_{x}(\tau=k)(1-q^{k-1})=1-\frac{F_x(q)}{q}.
    \stepcounter{equation}\tag{\theequation}
\end{align*}
By the strong Markov property at $\tau$, we have
\begin{align*}
&\sum_{m=1}^\infty\P_{x,v_i}\Big[f(L_m^{A\circlearrowleft}),\ \tau\le m\Big]pq^{m-1}\\
&\qquad =\sum_{m=1}^\infty\sum_{j=1}^4\P_{x,v_i}\Big[f(L_m^{A\circlearrowleft}),\ \tau\le m,\ S_{\tau-1}=v_j\Big]pq^{m-1}\\
&\qquad =\sum_{m=1}^\infty\sum_{k=1}^m\sum_{j=1}^4\P_{x}\Big[{\P}_{0}\Big[f({L}_{m-k}^{A\circlearrowleft}+1_{\{(i,j)\in A\}})\Big],\ \tau=k,\ S_{\tau-1}=v_j\Big]pq^{m-1}\\
&\qquad =\sum_{k=1}^\infty\sum_{m=k}^\infty\sum_{j=1}^4\P_{x}\Big[{\P}_{0}\Big[f({L}_{m-k}^{A\circlearrowleft}+1_{\{(i,j)\in A\}})\Big],\ \tau=k,\ S_{\tau-1}=v_j\Big]pq^{m-1}\\
&\qquad =\sum_{k=1}^\infty\sum_{m=k+1}^\infty\sum_{j=1}^4\P_{x}\Big[{\P}_{0}\Big[f({L}_{m-k}^{A\circlearrowleft}+1_{\{(i,j)\in A\}})\Big],\ \tau=k,\ S_{\tau-1}=v_j\Big]pq^{m-1}\\
&\qquad\qquad +\sum_{k=1}^\infty\sum_{j=1}^4\P_{x}\Big[{\P}_{0}\Big[f(1_{\{(i,j)\in A\}})\Big],\ \tau=k,\ S_{\tau-1}=v_j\Big]pq^{k-1}\\
&\qquad =\sum_{k=1}^\infty\sum_{j=1}^4\P_{x}\Big[{\P}_{0}\Big[f({L}_{T}^{A\circlearrowleft}+1_{\{(i,j)\in A\}})\Big],\ \tau=k,\ S_{\tau-1}=v_j\Big]q^{k}\\
&\qquad\qquad +\sum_{k=1}^\infty\sum_{j=1}^4\P_{x}\Big[f(1_{\{(i,j)\in A\}}),\ \tau=k,\ S_{\tau-1}=v_j\Big]pq^{k-1}\\
&\qquad =\sum_{j=1}^4\P_0\Big[f({L}_{T}^{A\circlearrowleft}+1_{\{(i,j)\in A\}})\Big]\P_x\Big[q^\tau,\ S_{\tau-1}=v_j\Big]\\
&\qquad \qquad +p\sum_{j=1}^4f(1_{\{(i,j)\in A\}})\P_x\Big[q^{\tau-1},\ S_{\tau-1}=v_j\Big].
    \stepcounter{equation}\tag{\theequation}
\end{align*}
Therefore, by (\ref{lem2.2-eq3}), (\ref{GxFx-identity}), and (\ref{G_0(q)sim}), we have
\begin{align*}
&\lim_{q\to 1-}p_AG_0(q)\P_{x,v_i}\Big[f(L_T^{A\circlearrowleft})\Big]\\
&\qquad =\lim_{q\to 1-}p_AG_0(q)\left(\left(1-\frac{F_x(q)}{q}\right)f(0)+\sum_{j=1}^4\P_0\Big[f({L}_{T}^{A\circlearrowleft}+1_{\{(i,j)\in A\}})\Big]\P_x\Big[q^\tau,\ S_{\tau-1}=v_j\Big]\right.\\
&\qquad \qquad \left.+p\sum_{j=1}^4f(1_{\{(i,j)\in A\}})\P_x\Big[q^{\tau-1},\ S_{\tau-1}=v_j\Big]\right)\\
&\qquad =p_Aa(x)f(0)+\sum_{k=0}^\infty \sum_{j=1}^4f(k+1_{\{(i,j)\in A\}})\P_x(S_{\tau-1}=v_j)\\
&\qquad =p_Aa(x)f(0)+\sum_{k=0}^\infty f(k)-f(0)\sum_{\substack{1\le j\le 4\\(i,j)\in A}}\P_x(S_{\tau-1}=v_j)\\
&\qquad =\Big(p_Aa(x)-r^A(x,v_i)\Big)f(0)+\sum_{k=0}^\infty f(k).
\stepcounter{equation}\tag{\theequation}
\end{align*}
This completes the proof.
\end{proof}

This lemma implies the harmonicity of $h_f^A$.

\begin{lem}
\label{h-martingale-lem}
The function $h_f^A$ is harmonic for the Markov chain $(S,L^{A\circlearrowleft},V)$; that is, 
 \begin{align}
 \label{lem2.4-eq1}
\P_{x,v}\Big[h_f^A(S_{1},L_{1}^{A\circlearrowleft}+l,V_{1})\Big]=h_f^A(x,l,v)\qquad \text{for}\ (x,v)\in E,\ l\in \Z_{\ge 0}.
 \end{align}
In particular, the process $(h_f^A(S_n,L_n^{A\circlearrowleft},V_n))_{n\ge 0}$ is a nonnegative $\P_{x,v}$-martingale.
\end{lem}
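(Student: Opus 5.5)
The plan is to deduce harmonicity from Lemma \ref{lem2.3} by a one-step Markov decomposition of the geometric clock, as the remark before the lemma suggests. Fix $(x,v)\in E$ and $l\ge 0$. Replacing $f$ by $f(\cdot+l)$, which still satisfies (\ref{assumption-f}) or is identically zero, we may assume $l=0$, since $h_{f(\cdot+l)}^A(x,0,v)=h_f^A(x,l,v)$ by (\ref{Def-h_f^A}).

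\emph{Step 1: decomposition at time $1$.} Condition on $T$. On $\{T=1\}$, with probability $p$, the functional is $f(L_1^{A\circlearrowleft})$. On $\{T\ge 2\}$, with probability $q$, the memoryless property gives $T-1\overset{d}{=}T$, independent of the walk. The Markov property of $(S,L^{A\circlearrowleft},V)$ at time $1$ then yields
\begin{align*}
\P_{x,v}\Big[f(L_T^{A\circlearrowleft})\Big]=p\,\P_{x,v}\Big[f(L_1^{A\circlearrowleft})\Big]+q\,\P_{x,v}\Big[\Phi_q(S_1,L_1^{A\circlearrowleft},V_1)\Big],
\end{align*}
where $\Phi_q(y,m,w):=\P_{y,w}[f(L_T^{A\circlearrowleft}+m)]$. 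The state $(S_1,V_1)$ lies in $E$, and when $S_1=0$ we have $V_1=0$, so $\Phi_q(0,m,0)$ is computed under $\P_0$. The increment $L_1^{A\circlearrowleft}\in\{0,1\}$ correctly records whether the step from $v_j$ to $0$ completes an $A$-excursion; the convention for $\P_{x,v_i}$ was designed precisely so that this Markov property holds.

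\emph{Step 2: passage to the limit.} Multiply by $p_AG_0(q)$ and let $q\to1-$. The first term vanishes because $pG_0(q)\to0$ by (\ref{G_0(q)sim}) and $f$ is bounded. The expectation in the second term is a finite sum over the four possible values of $S_1$. For each value, Lemma \ref{lem2.3} applied to $f(\cdot+m)$ with $m\in\{0,1\}$ gives $p_AG_0(q)\Phi_q(y,m,w)\to h_f^A(y,m,w)$. Since the sum is finite, no domination issue arises. The left-hand side converges to $h_f^A(x,0,v)$, so we obtain (\ref{lem2.4-eq1}).

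\emph{Step 3: martingale property.} The martingale statement follows from harmonicity via the Markov property. Integrability holds because $h_f^A\le (p_Aa(x)+1)\|f\|_\infty+\sum_k f(k)$ and $a(S_n)$ is bounded for fixed $n$ and bounded steps.

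The main obstacle is the rigorous verification of the Markov property of the triple in Step 1 under the nonstandard starting laws $\P_{x,v_i}$. In particular, one must check that the transition from $(v_j,\cdot,v_i)$ to the origin increments $L^{A\circlearrowleft}$ exactly when $(i,j)\in A$ and then resets $V$ to $0$. If a direct check were preferred instead, one could verify (\ref{lem2.4-eq1}) from the explicit formula, using harmonicity of $a$ off the origin, $\frac14\sum_i a(v_i)=a(0)+1=1$, and harmonicity of $r^A(\cdot,v_i)$ off the origin. I would keep that as a consistency check rather than the main argument.
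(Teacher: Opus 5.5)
Your proposal is correct and follows the paper's proof essentially verbatim. You split the geometric clock at time $1$ and use the Markov property of $(S,L^{A\circlearrowleft},V)$ to rewrite the $\{T\ge 2\}$ part as $q\,\P_{x,v}\bigl[\P_{S_1,V_1}[f(L_T^{A\circlearrowleft}+l+l')]_{l'=L_1^{A\circlearrowleft}}\bigr]$. You then let $q\to1-$ using $(1-q)G_0(q)\to0$ and Lemma~\ref{lem2.3} at the finitely many values of $(S_1,L_1^{A\circlearrowleft},V_1)$, and the martingale property follows from harmonicity and the Markov property, exactly as in the paper.
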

\begin{proof}
By the Markov property, we have
\begin{align*}
\label{lem2.4-eq2}
&p_AG_0(q)\P_{x,v}\Big[f(L_T^{A\circlearrowleft}+l)\Big]\\
&\qquad =pp_AG_0(q)\P_{x,v}\Big[f(L_1^{A\circlearrowleft}+l)\Big]+p_AG_0(q)\sum_{m=2}^\infty \P_{x,v}\Big[f(L_m^{A\circlearrowleft}+l)\Big]pq^{m-1}\\
&\qquad =pp_AG_0(q)\P_{x,v}\Big[f(L_1^{A\circlearrowleft}+l)\Big]+q\P_{x,v}\Big[p_AG_0(q){\P}_{S_1,V_1}\Big[f({L}_T^{A\circlearrowleft}+l+l')\Big]_{l'=L_1^{A\circlearrowleft}}\Big].
 \stepcounter{equation}\tag{\theequation}
\end{align*}
For each fixed initial state $(x,v)$, the random vector $(S_1,L_1^{A\circlearrowleft},V_1)$ takes only finitely many values. Therefore, letting $q\to 1-$ in (\ref{lem2.4-eq2}) and using (\ref{G_0(q)sim}) and (\ref{lem2.3-eq1}), we obtain (\ref{lem2.4-eq1}).

The martingale property follows immediately from the harmonicity of $h_f^A$ and the Markov property of $(S,L^{A\circlearrowleft},V)$. This completes the proof.
\end{proof}

Finally, we prove the penalization limit.

\begin{prop}
\label{thm2.5}
We have
 \begin{align}
 \label{thm2.5-eq1}
   \lim_{q\to 1-}\frac{\P_x[1_{\Lambda_n}\cdot f(L_{T}^{A\circlearrowleft})]}{\P_x[f(L_{T}^{A\circlearrowleft})]}=\P_x\left[1_{\Lambda_n}\cdot \frac{h_f^A(S_n,L_n^{A\circlearrowleft},V_n)}{h_f^{A}(x,0,0)}\right]
 \end{align}
 for $n\ge 0,\ x\in \Z^2,\ \Lambda_n\in \F_n$.
\end{prop}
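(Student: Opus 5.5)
The plan is to split the numerator at time $n$ according to whether $T\le n$ or $T>n$, and to use the memorylessness of the geometric clock on the event $\{T>n\}$. Multiply numerator and denominator by $p_AG_0(q)$. By (\ref{f(L_T^A)-limit-zero-h_f^A}) the denominator then converges to $h_f^A(x,0,0)$, which is strictly positive. It therefore suffices to prove
\begin{align*}
\lim_{q\to 1-}p_AG_0(q)\P_x\Big[1_{\Lambda_n} f(L_T^{A\circlearrowleft})\Big]=\P_x\Big[1_{\Lambda_n} h_f^A(S_n,L_n^{A\circlearrowleft},V_n)\Big].
\end{align*}

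\textbf{The event $\{T\le n\}$.} Here $f(L_T^{A\circlearrowleft})\le \max_{k\le n}f(k)$, because $L_m^{A\circlearrowleft}\le m$. Also $\P(T\le n)\le np$. So this part contributes at most $np\,p_AG_0(q)\max_{k\le n}f(k)$, which tends to $0$ by (\ref{G_0(q)sim}), since $p=1-q$.

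\textbf{The event $\{T>n\}$.} Since $T$ is independent of $S$ and geometric, conditional on $\{T>n\}$ the variable $T-n$ has the same law as $T$. Moreover $\P(T>n)=q^n\to1$. The key identity is the Markov property of $(S,L^{A\circlearrowleft},V)$ at time $n$:
\begin{align*}
\P_x\Big[1_{\Lambda_n} f(L_T^{A\circlearrowleft}),\,T>n\Big]=q^n\,\P_x\Big[1_{\Lambda_n}\,\P_{S_n,V_n}\big[f(L_{T}^{A\circlearrowleft}+l)\big]_{l=L_n^{A\circlearrowleft}}\Big].
\end{align*}
This rests on the convention for $\P_{x,v}$ in the middle of an excursion. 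That convention is exactly what makes the post-$n$ count of completed $A$-excursions, including the one currently in progress, have the law of $L^{A\circlearrowleft}$ under $\P_{S_n,V_n}$. Here $f(\cdot+l)$ is applied exactly as in the proof of Lemma \ref{h-martingale-lem}. We then multiply by $p_AG_0(q)$ and let $q\to1-$.

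\textbf{Passage to the limit.} Under $\P_x$, the random vector $(S_n,L_n^{A\circlearrowleft},V_n)$ takes only finitely many values. The expectation is therefore a finite sum, and we may pass to the limit termwise using (\ref{PxvfLTA-lim}), which is obtained from Lemma \ref{lem2.3} applied to $f(\cdot+l)$. The shifted function still satisfies (\ref{assumption-f}) when its sum is positive. When its sum vanishes, both sides are zero, since then $f(k+l)=0$ for all $k$ and $h_f^A(\cdot,l,\cdot)=0$. This gives the limit $\P_x[1_{\Lambda_n}h_f^A(S_n,L_n^{A\circlearrowleft},V_n)]$.

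\textbf{Main obstacle.} The only delicate step is justifying the Markov decomposition at the fixed time $n$ for the enlarged chain, in particular when $n$ falls in the middle of an excursion. This is handled by the state space $E$, and the finiteness of the range avoids any need for uniform integrability. Combining the two parts and dividing by the limit of the denominator completes the proof.
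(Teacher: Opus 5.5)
Your proof is correct and follows the paper's argument essentially step by step. The paper also splits the geometric sum into $m\le n$ and $m\ge n+1$, disposes of the first part using $(1-q)G_0(q)\to 0$, and treats the second part through the Markov property of $(S,L^{A\circlearrowleft},V)$ at time $n$, the factor $q^n$, Lemma \ref{lem2.3} applied to the shifted function $f(\cdot+l)$, and the finiteness of the range of $(S_n,L_n^{A\circlearrowleft},V_n)$. Your extra check of the case $\sum_k f(k+l)=0$ is harmless but unnecessary, since the limit computations in Lemmas 2.2 and \ref{lem2.3} use only the summability of $f$.
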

\begin{proof}
We have
\begin{align*}
\P_x\Big[1_{\Lambda_n}\cdot f(L_{T}^{A\circlearrowleft})\Big]&=\sum_{m=1}^n\P_x\Big[1_{\Lambda_n}\cdot f(L_{m}^{A\circlearrowleft})\Big]pq^{m-1}+\sum_{m=n+1}^\infty\P_x\Big[1_{\Lambda_n}\cdot f(L_{m}^{A\circlearrowleft})\Big]pq^{m-1}.
   \stepcounter{equation}\tag{\theequation}
\end{align*}
By (\ref{G_0(q)sim}), we have
\begin{align*}
&p_AG_0(q)\left|\sum_{m=1}^n\P_x\Big[1_{\Lambda_n}\cdot f(L_{m}^{A\circlearrowleft})\Big]pq^{m-1}\right|\\
&\qquad \le p_A \max_{0\le m\le n}f(m)\sum_{m=1}^n q^{m-1}\cdot (1-q)G_0(q)\to 0\qquad \text{as}\ q\to 1-.
\stepcounter{equation}\tag{\theequation}
\end{align*}
By the Markov property and (\ref{lem2.3-eq1}) applied to the shifted function $f(\cdot+l)$, we have
\begin{align*}
&p_AG_0(q)\sum_{m=n+1}^\infty\P_x\Big[1_{\Lambda_n}\cdot f(L_{m}^{A\circlearrowleft})\Big]pq^{m-1}\\
&\qquad =p_AG_0(q)\sum_{m=n+1}^\infty\P_x\Big[1_{\Lambda_n}\cdot \P_{S_n,V_n}\Big[f(L_{m-n}^{A\circlearrowleft}+l)\Big]_{l=L_n^{A\circlearrowleft}}\Big]pq^{m-1}\\
&\qquad =\P_x\Big[1_{\Lambda_n}\cdot p_AG_0(q)\P_{S_n,V_n}\Big[f(L_{T}^{A\circlearrowleft}+l)\Big]_{l=L_n^{A\circlearrowleft}}\Big]q^{n}\\
&\qquad \to \P_x\Big[1_{\Lambda_n}\cdot h_f^A(S_n,L_n^{A\circlearrowleft},V_n)\Big]
 \stepcounter{equation}\tag{\theequation}
\end{align*}
as $q\to 1-$. For each fixed $n$, the random vector $(S_n,L_n^{A\circlearrowleft},V_n)$ takes only finitely many values, which justifies passing to the limit inside the expectation. Thus, we have
\begin{align}
\label{thm2.5-eq2}
  \lim_{q\to 1-}p_AG_0(q)\P_x\Big[1_{\Lambda_n}\cdot f(L_{T}^{A\circlearrowleft})\Big]=\P_x\Big[1_{\Lambda_n}\cdot h_f^A(S_n,L_n^{A\circlearrowleft},V_n)\Big].
\end{align}
Combining (\ref{thm2.5-eq2}) with (\ref{f(L_T^A)-limit-zero-h_f^A}), we obtain (\ref{thm2.5-eq1}). This completes the proof.
\end{proof}

This completes the proof of all the assertions in Theorem \ref{thm1.1}.

\subsection{Penalized Process}
\label{S2.2}
In this subsection, we derive a path decomposition formula for the penalized process. 

For $x\in\Z^2$, we denote by $\Q_x^{f,A\circlearrowleft}$ the penalized measure on $\F_\infty$ arising in the limit (\ref{penal-limit}), that is,
\begin{align}
\label{def-QfA}
  \frac{d\Q_x^{f,A\circlearrowleft}}{d\P_x}\Big|_{\F_n}=\frac{h_f^A(S_n,L_n^{A\circlearrowleft},V_n)}{h_f^A(x,0,0)}\qquad \text{for}\ n\ge 0,\ x\in \Z^2.
\end{align}
For the existence of $\Q_x^{f,A\circlearrowleft}$, we refer the reader, for instance, to Theorem A.1 of \cite{Yano-Penal}. By Lemma \ref{h-martingale-lem}, $h_f^A$ is harmonic for the process $(S,L^{A\circlearrowleft},V)$. Therefore, under the probability measure $\Q_x^{f,A\circlearrowleft}$, the process is the h-transform of $(S,L^{A\circlearrowleft},V)$ with respect to $h_f^A$. Since $(S,L^{A\circlearrowleft},V)$ is a strong Markov process, it follows from the general theory of h-transforms that $(S,L^{A\circlearrowleft},V)$ under $\Q_x^{f,A\circlearrowleft}$ is also a strong Markov process (see, e.g., Theorem 11.9 of \cite{C-W}). In general, however, the coordinate process $S$ alone need not be Markovian.

For $k\ge 0$, we define the following process:
\begin{align}
  Z_n^{(k),A}:=1_{\{L_n^{A\circlearrowleft}<k\}}+\Big(1+p_Aa(S_n)-r^A(S_n,V_n)\Big)1_{\{L_n^{A\circlearrowleft}=k\}}\qquad \text{for}\ n\ge 0.
\end{align}
Then we have
\begin{align}
\label{h_f^A-Z}
h_f^A(S_n,L_n^{A\circlearrowleft},V_n)=\sum_{k=0}^\infty f(k)Z_n^{(k),A}.
\end{align}
Moreover, if we set $f^{(k)}(l):=1_{\{l=k\}}$ for $k\ge 0$, then $h_{f^{(k)}}^A(S_n,L_n^{A\circlearrowleft},V_n)=Z_n^{(k),A}$. Hence, by Lemma \ref{h-martingale-lem}, the process $(Z_n^{(k),A})_{n\ge 0}$ is a nonnegative $\P_x$-martingale. In particular, by the martingale property, we have
\begin{align}
  \P_x[Z_n^{(k),A}]=\P_x[Z_0^{(k),A}]=\begin{cases}
    1+p_Aa(x)&\text{for}\ k=0,\\[10pt]
    1&\text{for}\ k\ge 1.
  \end{cases}
\end{align}
For each $k\ge 0$, we can therefore define a probability measure $\Q_x^{(k),A\circlearrowleft}$ on $\F_\infty$ by
\begin{align}
\label{def-Q0A}
 \frac{d\Q_x^{(0),A\circlearrowleft}}{d\P_x}\Big|_{\F_n}&=\frac{Z_n^{(0),A}}{1+p_Aa(x)},\\
  \frac{d\Q_x^{(k),A\circlearrowleft}}{d\P_x}\Big|_{\F_n}&=Z_n^{(k),A}\qquad \text{for}\ k\ge 1.
\end{align}
Note that these measures do not depend on $f$.

The decomposition in (\ref{h_f^A-Z}), together with the definitions of $\Q_x^{f,A\circlearrowleft}$ and $\Q_x^{(k),A\circlearrowleft}$, gives the following mixture representation.

\begin{lem}
\label{lem-2.6}
For $x\in \Z^2$, we have
\begin{align}
\label{lem2.6-eq1}
\Q_x^{f,A\circlearrowleft}=\sum_{k=0}^\infty w_f^{(k),A}(x)\Q_x^{(k),A\circlearrowleft},
\end{align}
where
\begin{align}
  w_f^{(0),A}(x):=\frac{(1+p_Aa(x))f(0)}{h_f^A(x,0,0)},\qquad w_f^{(k),A}(x):=\frac{f(k)}{h_f^A(x,0,0)}\ \text{for}\ k\ge 1.
\end{align}
\end{lem}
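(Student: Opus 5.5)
The plan is to verify (\ref{lem2.6-eq1}) on each $\F_n$ and then pass to $\F_\infty$. Two countably additive measures on $\F_\infty$ that agree on the generating $\pi$-system $\bigcup_n \F_n$ coincide, so it suffices to show
\begin{align*}
\Q_x^{f,A\circlearrowleft}(\Lambda_n)=\sum_{k=0}^\infty w_f^{(k),A}(x)\,\Q_x^{(k),A\circlearrowleft}(\Lambda_n)\qquad\text{for } n\ge 0,\ \Lambda_n\in\F_n.
\end{align*}
The right-hand side of (\ref{lem2.6-eq1}) is a countably additive measure on $\F_\infty$. Each $\Q_x^{(k),A\circlearrowleft}$ is a probability measure and the weights are nonnegative, so monotone convergence for series of nonnegative terms gives countable additivity.

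Fix $n$ and $\Lambda_n\in\F_n$. By (\ref{def-QfA}) and (\ref{h_f^A-Z}),
\begin{align*}
\Q_x^{f,A\circlearrowleft}(\Lambda_n)=\frac{1}{h_f^A(x,0,0)}\P_x\Big[1_{\Lambda_n}\sum_{k=0}^\infty f(k)Z_n^{(k),A}\Big].
\end{align*}
All terms are nonnegative, so Tonelli lets us interchange the sum and the expectation. This gives
\begin{align*}
\sum_{k=0}^\infty \frac{f(k)}{h_f^A(x,0,0)}\P_x\big[1_{\Lambda_n}Z_n^{(k),A}\big].
\end{align*}
By (\ref{def-Q0A}):
\begin{itemize}
\item for $k\ge1$, $\P_x[1_{\Lambda_n}Z_n^{(k),A}]=\Q_x^{(k),A\circlearrowleft}(\Lambda_n)$;
\item for $k=0$, $\P_x[1_{\Lambda_n}Z_n^{(0),A}]=(1+p_Aa(x))\,\Q_x^{(0),A\circlearrowleft}(\Lambda_n)$.
\end{itemize}
Substituting these gives exactly the weights $w_f^{(k),A}(x)$.

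The weights sum to one. Take $\Lambda_n$ to be the whole space, or equivalently note that $h_f^A(x,0,0)=(1+p_Aa(x))f(0)+\sum_{k\ge1}f(k)$, since $r^A(x,0)=0$. The mixture is therefore a probability measure, consistent with the left-hand side.

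The main point needing care is the well-definedness of the objects. We must check that the identity (\ref{h_f^A-Z}) holds, and that each $\Q_x^{(k),A\circlearrowleft}$ exists as a measure on $\F_\infty$ via Theorem A.1 of \cite{Yano-Penal}. The paper already asserts both.

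Identity (\ref{h_f^A-Z}) is checked directly from (\ref{Def-h_f^A}) with $l=L_n^{A\circlearrowleft}$. In $\sum_{k\ge0}f(k+l)$, the term $f(j)$ appears exactly when $j\ge l$. The extra term contributes $(p_Aa-r^A)f(l)$. So the coefficient of $f(k)$ is $1$ if $l<k$, $1+p_Aa-r^A$ if $l=k$, and $0$ otherwise, which matches $Z_n^{(k),A}$.

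With these in hand, the extension from $\bigcup_n\F_n$ to $\F_\infty$ is the $\pi$–$\lambda$ uniqueness argument above.
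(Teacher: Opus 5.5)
Your proposal is correct and follows the same route as the paper, which simply substitutes the decomposition (\ref{h_f^A-Z}) into the density (\ref{def-QfA}) and compares with the definitions (\ref{def-Q0A}) of $\Q_x^{(k),A\circlearrowleft}$. You make explicit the steps the paper leaves implicit: the Tonelli interchange on each $\F_n$, the normalization $1+p_Aa(x)$ for $k=0$, and the $\pi$--$\lambda$ extension from $\bigcup_n\F_n$ to $\F_\infty$.
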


The next lemma shows that, for each $k\ge 0$, the walk completes exactly $k$ $A$-excursions, $\Q_x^{(k),A\circlearrowleft}$-almost surely.

\begin{lem}
\label{lem2.7}
Let $L_\infty^{A\circlearrowleft}:=\lim_{n\to \infty} L_n^{A\circlearrowleft}$. Then
 \begin{align}
 \label{lem2.7-eq1}
   \Q_x^{(k),A\circlearrowleft}(L_\infty^{A\circlearrowleft}=k)=1\qquad \text{for}\ k\ge 0,\ x\in \Z^2.
 \end{align}
\end{lem}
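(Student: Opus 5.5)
Under $\Q_x^{(k),A\circlearrowleft}$ the walk should complete exactly $k$ $A$-excursions. The plan is to prove two things: first, that $\{L_n^{A\circlearrowleft}>k\}$ is null for every $n$; second, that $L_n^{A\circlearrowleft}$ almost surely does not stay strictly below $k$ forever.

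\textbf{Step 1: the walk never exceeds $k$.} Since $Z_n^{(k),A}=0$ on $\{L_n^{A\circlearrowleft}>k\}$ and $\{L_n^{A\circlearrowleft}>k\}\in\F_n$, the density formula (\ref{def-Q0A}) gives
\[
\Q_x^{(k),A\circlearrowleft}(L_n^{A\circlearrowleft}>k)=0 \qquad \text{for every } n.
\]
Letting $n\to\infty$ shows $L_\infty^{A\circlearrowleft}\le k$ almost surely. This is immediate.

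\textbf{Step 2: the walk does not stay below $k$ (case $k\ge1$).} Note that $\sigma_k<\infty$ $\P_x$-a.s., by recurrence of $S$ together with $p_A>0$. For fixed $n$, the density on $\F_n$ restricted to $\{L_n^{A\circlearrowleft}<k\}$ equals $1$. Hence
\[
\Q_x^{(k),A\circlearrowleft}(L_n^{A\circlearrowleft}<k)=\P_x(L_n^{A\circlearrowleft}<k)=\P_x(\sigma_k>n)\to 0 \qquad (n\to\infty).
\]
The events $\{L_n^{A\circlearrowleft}<k\}$ decrease in $n$ to $\{L_\infty^{A\circlearrowleft}<k\}$, so continuity from above gives $\Q_x^{(k),A\circlearrowleft}(L_\infty^{A\circlearrowleft}<k)=0$. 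Combined with Step 1, this proves (\ref{lem2.7-eq1}) for $k\ge1$.

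\textbf{Step 3: the case $k=0$.} Here Step 1 already gives $L_\infty^{A\circlearrowleft}\le 0$, that is, $L_\infty^{A\circlearrowleft}=0$ almost surely. So the statement holds with no further work.

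\textbf{Main point to check.} The only subtlety is that events such as $\{L_\infty^{A\circlearrowleft}<k\}$ lie in $\F_\infty$, not in any single $\F_n$. Their probabilities therefore have to be computed as monotone limits of $\F_n$-events, which is exactly what Steps 1 and 2 do. No uniform integrability of $Z^{(k),A}$ is needed, because only $\F_n$-measurable events are evaluated under the density before passing to the limit.

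As a sanity check (not needed for the proof), $\Q_x^{(k),A\circlearrowleft}(L_n^{A\circlearrowleft}=k)=\P_x[Z_n^{(k),A};L_n^{A\circlearrowleft}=k]$ tends to $1$, consistent with the total mass identity $\P_x[Z_n^{(k),A}]=1$.
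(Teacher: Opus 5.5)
Your proof is correct and follows essentially the same route as the paper. The density vanishes on $\{L_n^{A\circlearrowleft}\ge k+1\}$, and it equals $1$ on $\{L_n^{A\circlearrowleft}<k\}$, so $\Q_x^{(k),A\circlearrowleft}(L_n^{A\circlearrowleft}<k)=\P_x(L_n^{A\circlearrowleft}<k)\to 0$ by recurrence and $p_A>0$; the case $k=0$ needs only the first observation, and your monotone-limit step just spells out the passage to $L_\infty^{A\circlearrowleft}$ that the paper leaves implicit.
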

\begin{proof}
First, suppose that $k\ge 1$. By recurrence, the walk reaches the origin and subsequently completes infinitely many excursions, $\P_x$-almost surely. By the strong Markov property, the types of these excursions are independent and identically distributed, and each excursion is an $A$-excursion with probability $p_A>0$. Hence, we have
\begin{align}
  L_\infty^{A\circlearrowleft}=\infty,
\qquad \P_x\text{-a.s.}
\end{align}
It follows that
 \begin{align*}
 \Q_x^{(k),A\circlearrowleft}(L_n^{A\circlearrowleft}<k)=\P_x\Big[1_{\{L_n^{A\circlearrowleft}<k\}}\cdot Z_n^{(k),A}\Big]=\P_x(L_n^{A\circlearrowleft}<k)\to 0\qquad \text{as}\ n\to \infty.
 \stepcounter{equation}\tag{\theequation}  
 \end{align*}
Thus, we have
 \begin{align}
   \Q_x^{(k),A\circlearrowleft}(L_\infty^{A\circlearrowleft}<k)=0.
 \end{align}
On the other hand, we have
  \begin{align*}
 \Q_x^{(k),A\circlearrowleft}(L_n^{A\circlearrowleft}\ge k+1)&=\P_x\Big[1_{\{L_n^{A\circlearrowleft}\ge k+1\}}\cdot Z_n^{(k),A}\Big]= 0 \qquad \text{for}\ n\ge 0.
 \stepcounter{equation}\tag{\theequation}  
 \end{align*}
Together, these observations prove the assertion for $k\ge 1$.

Next, we consider the case $k=0$. Since
 \begin{align*}
  \Q_x^{(0),A\circlearrowleft}(L_n^{A\circlearrowleft}\ge 1)&=\frac{1}{1+p_Aa(x)}\P_x\Big[1_{\{L_n^{A\circlearrowleft}\ge 1\}}\cdot Z_n^{(0),A}\Big]= 0\qquad \text{for}\ n\ge 0,
   \stepcounter{equation}\tag{\theequation}  
 \end{align*}
we obtain the assertion. This completes the proof.
\end{proof}

Lemmas \ref{lem-2.6} and \ref{lem2.7} identify the distribution of the total number of $A$-excursions under $\Q_x^{f,A\circlearrowleft}$.

\begin{cor}
\label{cor-2.8}
For $k\ge 0$, we have
\begin{align}
\label{cor2.8-eq1}
  \Q_x^{f,A\circlearrowleft}(  L_\infty^{A\circlearrowleft}=k)=w_f^{(k),A}(x)\qquad \text{for}\ x\in \Z^2.
\end{align}
In particular, we have
\begin{align}
   \Q_x^{f,A\circlearrowleft}(L_\infty^{A\circlearrowleft}<\infty)=1.
 \end{align}
Moreover, for $k\ge 0$ and $x\in \Z^2$, we have
\begin{align}
\label{cor2.8-eq2}
  \Q_x^{f,A\circlearrowleft}\Big[F,\   L_\infty^{A\circlearrowleft}=k\Big]=w_f^{(k),A}(x)\Q_x^{(k),A\circlearrowleft}[F],
\end{align}
where $F$ denotes a bounded $\F_\infty$-measurable random variable.
\end{cor}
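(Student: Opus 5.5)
The plan is to read everything off the mixture representation of Lemma \ref{lem-2.6} together with Lemma \ref{lem2.7}; no new analytic input is needed. I would first prove (\ref{cor2.8-eq2}) for a general bounded $\F_\infty$-measurable $F$, and then obtain (\ref{cor2.8-eq1}) and the almost sure finiteness as special cases.

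\textbf{Step 1: the mixture formula for $F$.} Fix $k\ge 0$. Since $L_\infty^{A\circlearrowleft}=\lim_n L_n^{A\circlearrowleft}$ is $\F_\infty$-measurable, the variable $F\cdot 1_{\{L_\infty^{A\circlearrowleft}=k\}}$ is bounded and $\F_\infty$-measurable. The identity (\ref{lem2.6-eq1}) is an identity of measures on $\F_\infty$ with nonnegative weights. Integrating this bounded function against it, and exchanging sum and integral (Fubini/Tonelli after splitting $F$ into positive and negative parts, legitimate because $\sum_m w_f^{(m),A}(x)<\infty$), gives
\begin{align*}
\Q_x^{f,A\circlearrowleft}\Big[F,\ L_\infty^{A\circlearrowleft}=k\Big]=\sum_{m=0}^\infty w_f^{(m),A}(x)\,\Q_x^{(m),A\circlearrowleft}\Big[F,\ L_\infty^{A\circlearrowleft}=k\Big].
\end{align*}

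\textbf{Step 2: collapsing the sum.} By Lemma \ref{lem2.7}, $L_\infty^{A\circlearrowleft}=m$ holds $\Q_x^{(m),A\circlearrowleft}$-a.s. Hence every term with $m\neq k$ vanishes, and in the term $m=k$ the indicator equals $1$ almost surely. This yields (\ref{cor2.8-eq2}). Taking $F\equiv 1$ gives (\ref{cor2.8-eq1}).

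\textbf{Step 3: almost sure finiteness.} Summing (\ref{cor2.8-eq1}) over $k$ gives
\begin{align*}
\Q_x^{f,A\circlearrowleft}(L_\infty^{A\circlearrowleft}<\infty)=\sum_{k\ge0}w_f^{(k),A}(x)=\frac{(1+p_Aa(x))f(0)+\sum_{k\ge1}f(k)}{h_f^A(x,0,0)}.
\end{align*}
Since $r^A(x,0)=0$, the definition (\ref{Def-h_f^A}) gives $h_f^A(x,0,0)=p_Aa(x)f(0)+\sum_{k\ge0}f(k)$, so the ratio equals $1$. Equivalently, evaluating (\ref{lem2.6-eq1}) on the whole space shows that the weights sum to $1$, because all the measures involved are probability measures.

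\textbf{Main obstacle.} The only delicate point is justifying that the measure identity (\ref{lem2.6-eq1}) holds on all of $\F_\infty$ rather than only on each $\F_n$. On $\F_n$ it follows from (\ref{h_f^A-Z}) and linearity of the densities. The two sides are then finite measures that agree on the $\pi$-system $\bigcup_n\F_n$, which generates $\F_\infty$; both have total mass $1$, since the total mass of the right-hand side is $\sum_k w_f^{(k),A}(x)=1$ as computed above. The $\pi$--$\lambda$ theorem therefore gives equality on $\F_\infty$. Once this extension is granted, which is how Lemma \ref{lem-2.6} is stated, the rest is bookkeeping.
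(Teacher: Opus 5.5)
Your proposal is correct and follows the paper's route: the paper states that the corollary comes from combining the mixture representation of Lemma \ref{lem-2.6} with the almost sure identity $L_\infty^{A\circlearrowleft}=k$ under $\Q_x^{(k),A\circlearrowleft}$ from Lemma \ref{lem2.7}, which is what you do. You also check explicitly that the weights sum to $1$ and that the mixture identity extends from $\bigcup_n\F_n$ to $\F_\infty$ via the $\pi$--$\lambda$ theorem, steps the paper leaves implicit.
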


We define the following function:
\begin{align}
\widetilde{h}^A(x,v):=1+p_Aa(x)-r^A(x,v)\qquad \text{for}\ (x,v)\in E.
\end{align}
This function is strictly positive on $E$. Indeed, $\widetilde{h}^A(0,0)=1$, while $\widetilde{h}^A(x,v)\ge p_Aa(x)>0$ whenever $x\neq 0$. We have
\begin{align}
    Z_n^{(k),A}=1_{\{L_n^{A\circlearrowleft}<k\}}+\widetilde{h}^A(S_n,V_n)1_{\{L_n^{A\circlearrowleft}=k\}}\qquad \text{for}\ k\ge 0,\ n\ge 0.
\end{align}
Note that
\begin{align}
  \widetilde{h}^A(x,0)=1+p_Aa(x)\qquad \text{for}\ x\in \Z^2.
\end{align}
By Lemma \ref{h-martingale-lem}, applied to $f^{(0)}(k)=1_{\{k=0\}}$, the process $(\widetilde{h}^A(S_n,V_n)1_{\{L_n^{A\circlearrowleft}=0\}})_{n\ge0}$ is a nonnegative $\P_{x,v}$-martingale for every $(x,v)\in E$, with initial value $\widetilde{h}^A(x,v)>0$. We therefore define a probability measure $\P_{x,v}^{A\updownarrow}$ on $\F_\infty$ by
\begin{align}
\label{def-P_{xv}^Aupdown}
  \frac{d\P_{x,v}^{A\updownarrow}}{d\P_{x,v}}\Big|_{\F_n}:=\frac{\widetilde{h}^A(S_n,V_n)}{\widetilde{h}^A(x,v)}1_{\{L_n^{A\circlearrowleft}=0\}}.
\end{align}
Note that the probability measure $\P_{x,v}^{A\updownarrow}$ does not depend on $f$. Under $\P_{x,v}^{A\updownarrow}$, the process $(S,V)$ is the $\widetilde{h}^A$-transform of the chain killed when an $A$-excursion is completed, and is therefore a strong Markov chain. By definition, it is clear that
\begin{align}
   \P_{x,v}^{A\updownarrow}(L_\infty^{A\circlearrowleft}=0)=1.
\end{align}
Note that
\begin{align}
\label{Q0P0}
\Q_x^{(0),A\circlearrowleft}=\P_{x,0}^{A\updownarrow}\qquad \text{for}\ x\in \Z^2.
\end{align}

Under $\Q_x^{f,A\circlearrowleft}$, conditional on $L_\infty^{A\circlearrowleft}=k\ge1$, provided that this event has positive probability, the path up to $\sigma_k$ has the same law as the original simple random walk started at $x$ and stopped at $\sigma_k$. The post-$\sigma_k$ process is independent of this stopped path and has law $\P_{0,0}^{A\updownarrow}$. Conditional on $L_\infty^{A\circlearrowleft}=0$, the entire path has law $\P_{x,0}^{A\updownarrow}$. More precisely, we have the following proposition.

\begin{prop}
\label{thm2.9}
For $x\in \Z^2$, we have
 \begin{align}
 \label{thm2.9-eq2}
   \Q_x^{f,A\circlearrowleft}\Big[F\cdot G\circ \theta_{\sigma_k},\ L_\infty^{A\circlearrowleft}=k\Big]&=w_f^{(k),A}(x)\P_x[F]\cdot\P_{0,0}^{A\updownarrow}[G]\qquad \text{for}\ k\ge 1,\\
   \label{thm2.9-eq3}
     \Q_x^{f,A\circlearrowleft}\Big[G,\ L_\infty^{A\circlearrowleft}=0\Big]&=w_f^{(0),A}(x)\P_{x,0}^{A\updownarrow}[G],
 \end{align}
where $F$ denotes a bounded $\F_{\sigma_k}$-measurable random variable, $G$ denotes a bounded $\F_\infty$-measurable random variable, and $\theta$ denotes the shift operator.
\end{prop}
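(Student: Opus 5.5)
By Corollary \ref{cor-2.8}, in particular (\ref{cor2.8-eq2}), it suffices to prove
\begin{align*}
\Q_x^{(k),A\circlearrowleft}\Big[F\cdot G\circ\theta_{\sigma_k}\Big]=\P_x[F]\cdot\P_{0,0}^{A\updownarrow}[G]\qquad\text{for } k\ge1,
\end{align*}
together with $\Q_x^{(0),A\circlearrowleft}[G]=\P_{x,0}^{A\updownarrow}[G]$. The case $k=0$ is exactly (\ref{Q0P0}). So the whole proposition reduces to a statement about the $f$-free measure $\Q_x^{(k),A\circlearrowleft}$ with $k\ge1$. By a monotone class argument, I may take $G$ to be bounded and $\F_m$-measurable for some fixed $m$.

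\textbf{Step 1: compute $\Q_x^{(k),A\circlearrowleft}[F\cdot G\circ\theta_{\sigma_k};\ \sigma_k\le n]$ for a deterministic $n$.} The variable $1_{\{\sigma_k\le n\}}F\cdot G\circ\theta_{\sigma_k}$ is not $\F_{n}$-measurable, but it is $\F_{n+m}$-measurable. Hence
\begin{align*}
\Q_x^{(k),A\circlearrowleft}\Big[F\cdot G\circ\theta_{\sigma_k};\ \sigma_k\le n\Big]
=\P_x\Big[F\cdot G\circ\theta_{\sigma_k}\cdot Z^{(k),A}_{n+m};\ \sigma_k\le n\Big].
\end{align*}
On $\{\sigma_k\le n\}$ we have $L^{A\circlearrowleft}_{\sigma_k}=k$ and $S_{\sigma_k}=0$, $V_{\sigma_k}=0$. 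Therefore
\begin{align*}
Z^{(k),A}_{n+m}=\Big(\widetilde h^A(S,V)1_{\{L^{A\circlearrowleft}=0\}}\Big)_{n+m-\sigma_k}\circ\theta_{\sigma_k}.
\end{align*}
Applying the strong Markov property at $\sigma_k$, the post-$\sigma_k$ expectation is
\begin{align*}
\P_{0,0}\Big[G\cdot\widetilde h^A(S_{j},V_{j})1_{\{L_{j}^{A\circlearrowleft}=0\}}\Big],\qquad j=n+m-\sigma_k\ge m.
\end{align*}
Since $G$ is $\F_m$-measurable and $\widetilde h^A(0,0)=1$, the definition (\ref{def-P_{xv}^Aupdown}) gives that this expectation equals $\P_{0,0}^{A\updownarrow}[G]$. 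Consequently,
\begin{align*}
\Q_x^{(k),A\circlearrowleft}\Big[F\cdot G\circ\theta_{\sigma_k};\ \sigma_k\le n\Big]=\P_x[F;\ \sigma_k\le n]\cdot\P_{0,0}^{A\updownarrow}[G].
\end{align*}
A little care is needed here: the shifted functional must depend on the shifted path only, and one has to check that $L^{A\circlearrowleft}\circ\theta_{\sigma_k}=L^{A\circlearrowleft}-k$ after $\sigma_k$. Both hold because $\sigma_k$ is a return time to the origin with $V_{\sigma_k}=0$, so no excursion is in progress at that time.

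\textbf{Step 2: let $n\to\infty$.} By Lemma \ref{lem2.7}, $\Q_x^{(k),A\circlearrowleft}(L^{A\circlearrowleft}_\infty=k)=1$, hence $\sigma_k<\infty$ holds $\Q_x^{(k),A\circlearrowleft}$-a.s. On the other side, recurrence and $p_A>0$ give $\sigma_k<\infty$ $\P_x$-a.s., as shown in the proof of Lemma \ref{lem2.7}. Bounded convergence on both sides therefore yields the identity with $\sigma_k\le n$ removed, which is the displayed reduced claim.

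\textbf{Step 3: conclude.} Multiplying by $w_f^{(k),A}(x)$ and invoking (\ref{cor2.8-eq2}) gives (\ref{thm2.9-eq2}) and (\ref{thm2.9-eq3}).

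The main obstacle is Step 1. The difficulty is the careful handling of the density on $\F_{n+m}$ restricted to the non-$\F_n$-measurable event involving $\theta_{\sigma_k}$, and the justification that the shifted martingale factor reproduces exactly the density of $\P_{0,0}^{A\updownarrow}$ on $\F_m$ at every time $j\ge m$. This last point relies on the martingale property of $\widetilde h^A(S,V)1_{\{L^{A\circlearrowleft}=0\}}$ from Lemma \ref{h-martingale-lem}.
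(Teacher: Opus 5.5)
Your proposal is correct and follows essentially the paper's route: reduce via (\ref{cor2.8-eq2}) and (\ref{Q0P0}) to the $f$-free measures $\Q_x^{(k),A\circlearrowleft}$, write the $\F_{n+m}$-density $Z^{(k),A}_{n+m}$ as a shifted copy of $\widetilde h^A(S,V)1_{\{L^{A\circlearrowleft}=0\}}$, apply the strong Markov property at $\sigma_k$, and finish with a monotone class argument.

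The only difference is that you truncate on $\{\sigma_k\le n\}$. The residual time $n+m-\sigma_k\ge m$ is then random, so you need the martingale property and a limit $n\to\infty$. The paper instead decomposes exactly on $\{\sigma_k=n\}$, so the residual time is exactly $m$ and the density of $\P_{0,0}^{A\updownarrow}$ on $\F_m$ appears directly. In exchange, your version makes explicit the facts $\sigma_k<\infty$ a.s.\ under both $\P_x$ and $\Q_x^{(k),A\circlearrowleft}$, which the paper uses only tacitly when it sums over $n$.
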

\begin{proof}
First, we consider the case $k\ge 1$. For any $m\ge 0$ and a bounded $\F_m$-measurable random variable $G_m$, we have 
\begin{align*}
\Q_x^{(k),A\circlearrowleft}\Big[F\cdot G_m\circ \theta_{\sigma_k}\Big]&=\sum_{n=1}^\infty \Q_x^{(k),A\circlearrowleft}\Big[F\cdot G_m\circ \theta_{n},\ \sigma_k=n\Big]\\
&=\sum_{n=1}^\infty \P_x\Big[F1_{\{\sigma_k=n\}}\cdot G_m\circ \theta_{n}\cdot Z_{n+m}^{(k),A}\Big]\\
&=\sum_{n=1}^\infty \P_x\Big[F1_{\{\sigma_k=n\}}\cdot G_m\circ \theta_{n}\cdot \Big(\widetilde{h}^A(S_m,V_m)1_{\{L_m^{A\circlearrowleft}=0\}}\Big)\circ \theta_n\Big]\\
&=\sum_{n=1}^\infty \P_x\Big[F,\ \sigma_k=n\Big]\cdot\P_0\Big[G_m\cdot \widetilde{h}^A(S_m,V_m)1_{\{L_m^{A\circlearrowleft}=0\}}\Big]\\
&=\P_x[F]\cdot \P_{0,0}^{A\updownarrow}[G_m].
   \stepcounter{equation}\tag{\theequation}  
\end{align*}
Thus, by (\ref{cor2.8-eq2}), we have
\begin{align}
\label{thm2.9-eq1}
   \Q_x^{f,A\circlearrowleft}\Big[F\cdot G_m\circ \theta_{\sigma_k},\ L_\infty^{A\circlearrowleft}=k\Big]=w_f^{(k),A}(x)\P_x[F]\cdot \P_{0,0}^{A\updownarrow}[G_m].
\end{align}
A monotone class argument extends (\ref{thm2.9-eq1}) to every bounded $\F_\infty$-measurable random variable $G$, proving (\ref{thm2.9-eq2}).

Finally, (\ref{thm2.9-eq3}) follows directly from Corollary \ref{cor-2.8} and (\ref{Q0P0}). This completes the proof.
\end{proof}

Recall that for $x\neq 0$, the law $\P_x^{\updownarrow}$ of the two-dimensional simple random walk conditioned to avoid the origin is defined as follows:
\begin{align}
  \frac{d\P_x^{\updownarrow}}{d\P_x}\Big|_{\F_n}=\frac{a(S_n)}{a(x)}1_{\{n<\tau\}}.
\end{align}

Let $A^c:=\{1,2,3,4\}^2\setminus A$. Suppose that $A\neq\{1,2,3,4\}^2$. Under $\P_{0,0}^{A\updownarrow}$, conditional on $\{L_\infty=n\}$, the walk completes $n$ independent excursions, each having the law of an ordinary excursion conditioned to be an $A^c$-excursion. It then moves from the origin to a uniformly chosen neighboring site $v_i$, independently of these excursions, and subsequently follows $\P_{v_i}^{\updownarrow}$. More precisely, we have the following proposition.

\begin{prop}
\label{thm2.10}
Assume that $A\neq\{1,2,3,4\}^2$. For $n\ge 0$ and $i\in \{1,2,3,4\}$, we have
   \begin{align*}
   \label{Thm2.10-eq1}
   &\P_{0,0}^{A\updownarrow}\left[\left(\prod_{k=1}^n G_k\circ\theta_{\rho_{k-1}}\right)\cdot H\circ \theta_{\rho_{n}+1},\ L_\infty=n,\ S_{\rho_n+1}=v_i\right]\\
   &\qquad =\frac{p_A}{4}\left(\prod_{k=1}^n\P_0\Big[G_k,\ (I_1,J_1)\notin A\Big]\right)\cdot \P_{v_i}^{\updownarrow}[H],
      \stepcounter{equation}\tag{\theequation}   
 \end{align*}
where $G_1,...,G_n$ denote bounded $\F_\tau$-measurable random variables and $H$ denotes a bounded $\F_\infty$-measurable random variable. For $n=0$, both products are interpreted as $1$. In particular, we have
  \begin{align}
    \P_{0,0}^{A\updownarrow}(L_\infty=n)=p_A(1-p_A)^{n}\qquad \text{for}\ n\ge 0.
 \end{align}
\end{prop}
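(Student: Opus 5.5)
We compute the left-hand side of (\ref{Thm2.10-eq1}) directly from the density (\ref{def-P_{xv}^Aupdown}) with $(x,v)=(0,0)$, for which $\widetilde h^A(0,0)=1$. First, take $H$ to be $\F_m$-measurable for some fixed $m\ge 0$. The event $\{L_\infty=n,\ S_{\rho_n+1}=v_i\}$ says that there are exactly $n$ returns, that the first step after $\rho_n$ goes to $v_i$, and that the walk never returns afterwards. This event is not in any $\F_N$. We therefore replace $\{L_\infty=n\}$ by $\{L_N=n\}$, equivalently $\{\rho_n\le N<\rho_{n+1}\}$, and let $N\to\infty$ at the end. On $\{\rho_n=r,\ N=r+1+m'\}$ with $m'\ge m$, we evaluate at time $N$:
\[
\P_0\Big[\prod_k G_k\circ\theta_{\rho_{k-1}}\cdot H\circ\theta_{r+1}\cdot \widetilde h^A(S_N,V_N)1_{\{L^{A\circlearrowleft}_N=0\}},\ \rho_n=r,\ S_{r+1}=v_i,\ \rho_{n+1}>N\Big].
\]
On this event $L_N^{A\circlearrowleft}=0$ means that all $n$ excursions are $A^c$-excursions. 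Also $V_N=v_i$ and $S_N\neq 0$, so $\widetilde h^A(S_N,V_N)=1+p_Aa(S_N)-r^A(S_N,v_i)$.

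Next, apply the strong Markov property at $\rho_0=0,\rho_1,\dots,\rho_{n-1}$. This factorizes off $\prod_{k=1}^n\P_0[G_k,\ (I_1,J_1)\notin A]$, after summing over $r$. Then apply the Markov property at $\rho_n+1$. The step to $v_i$ contributes the factor $1/4$, and the remaining factor is
\[
\P_{v_i}\Big[H\cdot \big(1+p_Aa(S_{m'})-r^A(S_{m'},v_i)\big),\ m'<\tau\Big].
\]
Here we check that the term $1-r^A(S_{m'},v_i)$ is harmless. Since $1-r^A(y,v_i)\le 1$, this part is bounded by $\|H\|_\infty\P_{v_i}(m'<\tau)$, which tends to $0$ as $m'\to\infty$ by recurrence. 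The remaining part is $p_A\,\P_{v_i}[H\,a(S_{m'}),\ m'<\tau]=p_A a(v_i)\P^{\updownarrow}_{v_i}[H]$. The factor $a(v_i)=1$ is the standard value of the potential kernel at neighbours of the origin; alternatively, it follows from $\widetilde h^A(0,0)=1$ and harmonicity. Multiplying out gives $\frac{p_A}{4}\prod_k\P_0[G_k,(I_1,J_1)\notin A]\,\P^{\updownarrow}_{v_i}[H]$.

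The main obstacle is justifying the passage $N\to\infty$, that is, showing that $\P^{A\updownarrow}_{0,0}(\cdot,\ L_N=n)\to \P^{A\updownarrow}_{0,0}(\cdot,\ L_\infty=n)$. The difficulty is that under $\P^{A\updownarrow}_{0,0}$ we do not yet know that $L_\infty<\infty$. The convergence itself holds because $\{L_N=n\}$ and $\{L_\infty=n\}$ differ by events contained in $\{L_N=n\}\setminus\{L_\infty=n\}=\{\rho_n\le N<\rho_{n+1}<\infty\}$, whose measure tends to $0$ by continuity from above, since $\bigcap_N\{N<\rho_{n+1}<\infty\}=\emptyset$. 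The computation above, carried out with the time-$N$ density and $N\to\infty$, then yields the formula for $\F_m$-measurable $H$. A monotone class argument in $H$ extends it to all bounded $\F_\infty$-measurable $H$.

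Finally, take $G_k\equiv1$ and $H\equiv1$, and sum over $i$. This gives $\P^{A\updownarrow}_{0,0}(L_\infty=n)=p_A(1-p_A)^n$. Summing over $n$ gives total mass $1$, which confirms a posteriori that $L_\infty<\infty$ almost surely under $\P^{A\updownarrow}_{0,0}$.
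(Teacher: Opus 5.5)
Your proof is correct, and its core step is the paper's. On the final escape you split $\widetilde h^A=p_Aa+(1-r^A)$. You use that $a(S_m)1_{\{m<\tau\}}$ is a $\P_{v_i}$-martingale to get $p_Aa(v_i)\P^{\updownarrow}_{v_i}[H]$, and you kill the $(1-r^A)$ part by recurrence.

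The organization differs, though. The paper proves two separate identities under $\P^{A\updownarrow}_{0,0}$:
\begin{itemize}
\item $\P^{A\updownarrow}_{0,0}[G,\tau<\infty]=\P_0[G,(I_1,J_1)\notin A]$, by optional sampling at $\tau\wedge m$;
\item $\P^{A\updownarrow}_{0,0}[H\circ\theta_1,\tau=\infty,S_1=v_i]=\frac{p_A}{4}\P^{\updownarrow}_{v_i}[H]$, with a deterministic horizon $m\to\infty$.
\end{itemize}
It then glues them with the strong Markov property of the $h$-transformed chain $(S,V)$ at $\rho_1,\dots,\rho_n$. You instead do a single computation with the time-$N$ density and use only the Markov property of the original walk under $\P_0$. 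So you never need the Markov property under $\P^{A\updownarrow}_{0,0}$. The price is that the residual horizon $m'=N-\rho_n-1$ is random.

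That price should be paid explicitly, in two places.

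First, your factorization ``after summing over $r$'' works only because $\P_{v_i}[H\,a(S_{m'}),\,m'<\tau]=a(v_i)\P^{\updownarrow}_{v_i}[H]$ for every $m'\ge m$, independently of $m'$. The error term is bounded by $\prod_k\|G_k\|_\infty\,\|H\|_\infty\sum_{r\le N-1-m}\P_0(\rho_n=r)\,\P_{v_i}(\tau>N-r-1)$. This tends to $0$ by dominated convergence in $r$. It does not follow simply from $\P_{v_i}(\tau>m')\to0$, since $m'$ is not uniformly large over the range of $r$.

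Second, the event you actually integrate over is $\{\rho_n\le N-1-m,\ \rho_{n+1}>N\}$, not $\{L_N=n\}$. The reason is that for $N-m-1<\rho_n\le N$, neither $S_{\rho_n+1}$ nor $H\circ\theta_{\rho_n+1}$ is $\F_N$-measurable. The indicator of this event converges pointwise to $1_{\{L_\infty=n\}}$. Hence bounded convergence under $\P^{A\updownarrow}_{0,0}$ gives the limit of the left-hand side at once, without knowing $L_\infty<\infty$ in advance. This replaces your symmetric-difference argument, which as written only controls $\{L_N=n\}\setminus\{L_\infty=n\}$ and not $\{L_\infty=n\}\setminus\{L_N=n\}$.
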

\begin{proof}
Let $G$ be a bounded $\F_\tau$-measurable random variable. For each $m\ge 0$, applying the optional sampling theorem at the bounded stopping time $\tau\wedge m$ gives
  \begin{align*}
\P_{0,0}^{A\updownarrow}\Big[G_,\ \tau\le m\Big]&=\P_{0}\Big[G1_{\{\tau\le m\}}\cdot \widetilde{h}^A(S_m,V_m)1_{\{L_m^{A\circlearrowleft}=0\}}\Big]\\
&=\P_{0}\Big[G1_{\{\tau\le m\}}\cdot \widetilde{h}^A(S_\tau,V_\tau)1_{\{L_\tau^{A\circlearrowleft}=0\}}\Big]\\
&=\P_{0}\Big[G1_{\{\tau\le m\}}\cdot1_{\{(I_1,J_1)\notin A\}}\Big].
  \stepcounter{equation}\tag{\theequation}   
\end{align*}
Letting $m\to \infty$, we have
\begin{align}
\label{thm2.10-eq1}
  \P_{0,0}^{A\updownarrow}\Big[G,\ \tau<\infty\Big]=\P_{0}\Big[G,\ (I_1,J_1)\notin A\Big].
\end{align}

For $k\ge 0$, let $H_k$ be a bounded $\F_k$-measurable random variable. For $m\ge k+1$, we have
\begin{align*}
\label{Thm-1.14-eq1}
&\P_{0,0}^{A\updownarrow}\Big[H_k\circ \theta_1,\ \tau> m,\ S_1=v_i\Big]\\
&\qquad =\P_{0}\Big[H_k\circ \theta_1\cdot \widetilde{h}^A(S_m,V_m)1_{\{L_m^{A\circlearrowleft}=0\}},\ \tau> m,\ S_1=v_i\Big]\\
&\qquad =\frac{1}{4}\P_{v_i}\Big[H_k\cdot \widetilde{h}^A(S_{m-1},v_i)1_{\{L_{m-1}=0\}}\Big]\\
&\qquad =\frac{p_A}{4}\P_{v_i}\Big[H_k\cdot a(S_{m-1})1_{\{L_{m-1}=0\}}\Big]+\frac{1}{4}\P_{v_i}\Big[H_k\cdot\Big(1-r^A(S_{m-1},v_i)\Big)1_{\{L_{m-1}=0\}}\Big].
  \stepcounter{equation}\tag{\theequation}   
\end{align*}
By the optional sampling theorem and (\ref{avi=1}), the first term on the right-hand side of (\ref{Thm-1.14-eq1}) can be rewritten as
\begin{align*}
  \frac{p_A}{4}\P_{v_i}\Big[H_k\cdot a(S_{m-1})1_{\{L_{m-1}=0\}}\Big]=\frac{p_Aa(v_i)}{4}\P_{v_i}\left[H_k\cdot \frac{a(S_{k})}{a(v_i)}1_{\{L_{k}=0\}}\right]=\frac{p_A}{4}\P_{v_i}^{\updownarrow}[H_k].
    \stepcounter{equation}\tag{\theequation}   
\end{align*}
For the second term on the right-hand side of (\ref{Thm-1.14-eq1}), the bound $0\le r^A \le 1$ gives
\begin{align}
  \left|\frac{1}{4}\P_{v_i}\Big[H_k\cdot\Big(1-r^A(S_{m-1},v_i)\Big)1_{\{L_{m-1}=0\}}\Big]\right|\le \frac{\|H_k\|_\infty}{4}\P_{v_i}(L_{m-1}=0)\to 0\qquad \text{as}\ m\to \infty.
\end{align}
Thus, letting $m\to\infty$ in (\ref{Thm-1.14-eq1}), we have
\begin{align*}
\label{thm2.10-eq2}
&\P_{0,0}^{A\updownarrow}\Big[H_k\circ \theta_1,\ \tau=\infty ,\ S_1=v_i\Big]=\frac{p_A}{4}\P_{v_i}^{\updownarrow}[H_k].
  \stepcounter{equation}\tag{\theequation}   
\end{align*}
A monotone class argument extends (\ref{thm2.10-eq2}) to every bounded $\F_\infty$-measurable random variable $H$. For $n\ge 1$, applying the strong Markov property successively at $\rho_1,...,\rho_n$ and using (\ref{thm2.10-eq1}) and (\ref{thm2.10-eq2}), we obtain
\begin{align*}
&\P_{0,0}^{A\updownarrow}\Big[G_1\cdot G_2\circ \theta_{\rho_1}\cdots G_n\circ \theta_{\rho_{n-1}}\cdot H\circ \theta_{\rho_{n}+1},\ L_\infty=n,\ S_{\rho_n+1}=v_i\Big]\\
&\qquad =\prod_{k=1}^n \P_{0,0}^{A\updownarrow}\Big[G_k,\ \tau<\infty\Big]\cdot \P_{0,0}^{A\updownarrow}\Big[H\circ \theta_1,\ \tau=\infty,\ S_1=v_i\Big]\\
&\qquad =\prod_{k=1}^n\P_0\Big[G_k,\ (I_1,J_1)\notin A\Big]\cdot \frac{p_A}{4}\P_{v_i}^{\updownarrow}[H].
    \stepcounter{equation}\tag{\theequation}  
\end{align*}
The case $n=0$ follows directly from (\ref{thm2.10-eq2}). This completes the proof.
\end{proof}

We next consider the case $A=A_\ast:=\{1,2,3,4\}^2$. Under $\P_{0,0}^{A\updownarrow}$, the walk never returns to the origin after time $0$. Its first step is to a neighboring site chosen uniformly from $\{v_1,v_2,v_3,v_4\}$. Conditional on this site being $v_i$, the path from time $1$ onward has law $\P_{v_i}^{\updownarrow}$. More precisely, we have the following proposition.

\begin{prop}
\label{thm2.11}
Assume that $A=A_\ast$. Then we have
 \begin{align}
   \P_{0,0}^{A_\ast\updownarrow}\Big[H\circ \theta_1,\ S_1=v_i\Big]=\frac{1}{4}\P_{v_i}^{\updownarrow}[H]\qquad \text{for}\ i\in \{1,2,3,4\},
 \end{align}
where $H$ denotes a bounded $\F_\infty$-measurable random variable.
\end{prop}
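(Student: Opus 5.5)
The plan is a direct computation from the definition (\ref{def-P_{xv}^Aupdown}) in the special case $A=A_\ast$, where $p_{A_\ast}=1$. First I identify $\widetilde{h}^{A_\ast}$ explicitly. For $x\neq 0$ and $v=v_i$, every pair $(i,j)$ lies in $A_\ast$. Hence
\begin{align*}
r^{A_\ast}(x,v_i)=\P_x\Big(S_{\tau-1}\in\{v_1,\dots,v_4\}\Big)=\P_x(\tau<\infty)=1
\end{align*}
by recurrence; equivalently, the potential-kernel formula gives $\frac14\sum_{j=1}^4(a(x)+1-a(x-v_j))=1$ by harmonicity of $a$ off the origin. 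Therefore $\widetilde{h}^{A_\ast}(x,v_i)=a(x)$ for $x\neq0$, and $\widetilde{h}^{A_\ast}(0,0)=1$.

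Next I describe the indicator $1_{\{L_m^{A_\ast\circlearrowleft}=0\}}$ under $\P_{0,0}=\P_0$. Every completed excursion is an $A_\ast$-excursion, so $L^{A_\ast\circlearrowleft}_m=0$ holds if and only if $m<\tau$. For $1\le m<\tau$ we have $S_m\neq 0$ and $V_m=S_1$. Consequently, for $m\ge1$,
\begin{align*}
\widetilde{h}^{A_\ast}(S_m,V_m)1_{\{L_m^{A_\ast\circlearrowleft}=0\}}=a(S_m)1_{\{m<\tau\}}\qquad \P_0\text{-a.s.}
\end{align*}

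Now fix $k\ge0$ and a bounded $\F_k$-measurable $H_k$, so that $H_k\circ\theta_1$ is $\F_{k+1}$-measurable. By (\ref{def-P_{xv}^Aupdown}) with $m=k+1$ and the Markov property at time $1$,
\begin{align*}
\P_{0,0}^{A_\ast\updownarrow}\Big[H_k\circ\theta_1,\ S_1=v_i\Big]
&=\P_0\Big[H_k\circ\theta_1\cdot a(S_{k+1})1_{\{k+1<\tau\}},\ S_1=v_i\Big]\\
&=\frac14\P_{v_i}\Big[H_k\cdot a(S_k)1_{\{k<\tau\}}\Big].
\end{align*}
Here I use that, on $\{S_1=v_i\}$, the event $\{k+1<\tau\}$ is the shift of $\{k<\tau\}$, where $\tau$ is the first hitting time of $0$ for the walk started at $v_i$. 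By the definition of $\P^{\updownarrow}_{v_i}$ and (\ref{avi=1}), which states $a(v_i)=1$, the right-hand side equals $\frac{a(v_i)}4\P^{\updownarrow}_{v_i}[H_k]=\frac14\P^{\updownarrow}_{v_i}[H_k]$.

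Finally, a monotone class argument extends the identity from the union of the $\F_k$ to bounded $\F_\infty$-measurable $H$. The only point needing care is the bookkeeping in the second step: checking that $V_m$ equals the first-step direction before $\tau$, and that the indicator vanishes exactly from time $\tau$ on. Both follow from the definitions of $V$ and $L^{A\circlearrowleft}$. As a consequence, taking $H=1$ and summing over $i$ gives total mass $1$, so the walk never returns to the origin under $\P_{0,0}^{A_\ast\updownarrow}$.
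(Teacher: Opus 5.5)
Your proposal is correct and follows the paper's proof essentially step for step: identify $\widetilde{h}^{A_\ast}(x,v_i)=a(x)$ for $x\neq 0$, rewrite the density on $\F_{k+1}$ as $a(S_{k+1})1_{\{k+1<\tau\}}$, apply the Markov property at time $1$, use $a(v_i)=1$, and finish with a monotone class argument. Only your closing aside is loosely justified: non-return under $\P_{0,0}^{A_\ast\updownarrow}$ follows by taking $H=1_{\{\tau=\infty\}}$ and using $\P_{v_i}^{\updownarrow}(\tau=\infty)=1$ (or directly from $L_\infty^{A_\ast\circlearrowleft}=0$ almost surely), not from the total mass being $1$.
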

\begin{proof}
In this case, $p_{A_\ast}=1$, $L^{A_\ast\circlearrowleft}=L$ under $\mathbb P_0$, and
\begin{align}
  \widetilde{h}^{A_\ast}(x,v_i)=a(x),
\qquad \text{for}\ x\neq 0,\ i\in \{1,2,3,4\}.
\end{align}
Let $m\ge 0$, and let $H_m$ be a bounded $\F_m$-measurable random variable. We have
 \begin{align*}
 \P_{0,0}^{A_\ast\updownarrow}\Big[H_m\circ \theta_1,\ S_1=v_i\Big]&=\P_{0}\Big[H_m\circ \theta_1\cdot 1_{\{S_1=v_i\}}\cdot \widetilde{h}^{A_\ast}(S_{m+1},V_{m+1})1_{\{L_{m+1}^{A_\ast\circlearrowleft}=0\}}\Big]\\
 &=\P_0(S_1=v_i)\P_{v_i,v_i}\Big[H_m\cdot \widetilde{h}^{A_\ast}(S_m,V_m)1_{\{L_m=0\}}\Big]\\
 &=\frac{1}{4}\P_{v_i}\Big[H_m\cdot a(S_m)1_{\{L_m=0\}}\Big]\\
 &=\frac{1}{4}\P_{v_i}^{\updownarrow}[H_m].
   \stepcounter{equation}\tag{\theequation}  
 \end{align*}
A monotone class argument extends this identity to every bounded $\F_\infty$-measurable random variable $H$. This completes the proof.
\end{proof}

Finally, let $x\neq 0$. Under $\P_{x,0}^{A\updownarrow}$, conditional on $\tau<\infty$, the path up to $\tau$ has the same law as the original simple random walk started at $x$ and stopped upon hitting the origin. Under this conditional law, the post-$\tau$ process is independent of this stopped path and has law $\P_{0,0}^{A\updownarrow}$. Conditional on $\tau=\infty$, the entire path has law $\P_x^{\updownarrow}$. More precisely, we have the following proposition.

\begin{prop}
\label{thm2.12}
For $x\neq 0$, we have
\begin{align}
\label{thm2.12-eq2}
  \P_{x,0}^{A\updownarrow}\Big[G\cdot H\circ \theta_\tau,\ \tau<\infty\Big]&=\frac{1}{1+p_Aa(x)}\cdot \P_x[G]\cdot \P_{0,0}^{A\updownarrow}[H],\\[10pt]
  \P_{x,0}^{A\updownarrow}\Big[H,\ \tau=\infty\Big]&=\frac{p_Aa(x)}{1+p_Aa(x)}\cdot\P_x^\updownarrow[H],
\end{align}
where $G$ denotes a bounded $\F_\tau$-measurable random variable and $H$ denotes a bounded $\F_\infty$-measurable random variable.
\end{prop}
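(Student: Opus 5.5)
We prove both identities directly from the definition (\ref{def-P_{xv}^Aupdown}) with $v=0$, using the same truncation device as in the proof of Proposition \ref{thm2.10}. For $x\neq 0$ and $v=0$, before $\tau$ we have $V_n=0$ and $L_n^{A\circlearrowleft}=0$. Hence on $\{n<\tau\}$ the density is
\begin{align*}
\frac{\widetilde h^A(S_n,0)}{1+p_Aa(x)}=\frac{1+p_Aa(S_n)}{1+p_Aa(x)}.
\end{align*}
At time $\tau$ we have $S_\tau=0$, $V_\tau=0$, $L_\tau^{A\circlearrowleft}=0$ (no excursion has been completed yet under $\P_{x,0}$), and $\widetilde h^A(0,0)=1$.

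\textbf{First identity.} Let $G$ be bounded $\F_\tau$-measurable and let $H_k$ be bounded $\F_k$-measurable. For $m$ large, split according to $\{\tau=n\}$ with $n\le m-k$. On this event the density at time $n+k$ factors, by the Markov property at the stopping time $\tau$, as the post-$\tau$ density $\widetilde h^A(S_k,V_k)1_{\{L_k^{A\circlearrowleft}=0\}}\circ\theta_\tau$, divided by $1+p_Aa(x)$. The strong Markov property then gives
\begin{align*}
\P_{x,0}^{A\updownarrow}\Big[G\cdot H_k\circ\theta_\tau,\ \tau\le m-k\Big]=\frac{1}{1+p_Aa(x)}\P_x\Big[G,\ \tau\le m-k\Big]\,\P_{0,0}^{A\updownarrow}[H_k].
\end{align*}
Let $m\to\infty$. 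Recurrence gives $\P_x(\tau<\infty)=1$, so $\P_x[G,\tau<\infty]=\P_x[G]$. A monotone class argument then extends the identity from $H_k$ to a general bounded $\F_\infty$-measurable $H$.

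\textbf{Second identity.} Let $H_k$ be bounded $\F_k$-measurable and take $m\ge k$. By optional sampling and the martingale property,
\begin{align*}
\P_{x,0}^{A\updownarrow}[H_k,\ \tau>m]=\frac{1}{1+p_Aa(x)}\P_x\Big[H_k\,(1+p_Aa(S_m))1_{\{m<\tau\}}\Big].
\end{align*}
We split this into two terms.
\begin{itemize}
\item The term with $1$ is bounded by $\|H_k\|_\infty\P_x(\tau>m)$, which tends to $0$ by recurrence.
\item For the term with $p_Aa(S_m)$, use that $a(S_{n})1_{\{n<\tau\}}$ is a $\P_x$-martingale, since $a$ is harmonic off the origin and $a(0)=0$; this is the fact used to define $\P_x^\updownarrow$. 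Optional sampling then gives
\begin{align*}
\P_x\Big[H_k\,a(S_m)1_{\{m<\tau\}}\Big]=a(x)\,\P_x^\updownarrow[H_k]
\end{align*}
for all $m\ge k$.
\end{itemize}
Letting $m\to\infty$ yields $\P_{x,0}^{A\updownarrow}[H_k,\tau=\infty]=\frac{p_Aa(x)}{1+p_Aa(x)}\P_x^\updownarrow[H_k]$. A monotone class argument completes the proof.

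\textbf{Main obstacle.} The only delicate point is the factorization of the density on $\{\tau=n\}$. We must check that the process $(S,L^{A\circlearrowleft},V)$ restarted at $\tau$ from $(0,0,0)$ is exactly $\P_{0,0}$, so that no $A$-excursion counting carries over from the pre-$\tau$ segment. This holds because under $\P_{x,0}$ the pre-$\tau$ segment is not an excursion from the origin.
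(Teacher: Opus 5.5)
Your proof is correct. It differs from the paper's mainly in the second identity.

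For the first identity, the paper computes $\P^{A\updownarrow}_{x,0}[G,\ \tau<\infty]$ by optional sampling at $\tau\wedge n$ and then invokes the strong Markov property of the $\widetilde h^A$-transformed chain $(S,V)$. You instead factor the density on each event $\{\tau=n\}$ through the Markov property of the unconditioned walk. This checks the needed strong Markov property at $\tau$ by hand, so your argument is more self-contained.

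For the second identity, the paper takes $G=1$ in the first identity to obtain $\P^{A\updownarrow}_{x,0}(\tau=\infty)=p_Aa(x)/(1+p_Aa(x))$. It then applies the Markov property of the conditioned chain at a fixed time $n$ and substitutes $\P^{A\updownarrow}_{S_n,0}(\tau=\infty)$, so that the factor $1+p_Aa(S_n)$ cancels against $\widetilde h^A(S_n,0)$. You instead truncate on $\{\tau>m\}$, split $\widetilde h^A(S_m,0)=1+p_Aa(S_m)$, discard the constant part by recurrence, and recognize the remainder as the $\P^\updownarrow_x$-density. This is the device from the proof of Proposition \ref{thm2.10}.

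What each route buys: yours does not depend on the first identity or on any Markov property of $\P^{A\updownarrow}$, and it yields the mass $\P^{A\updownarrow}_{x,0}(\tau=\infty)$ as an output (take $H=1$) rather than an input. The paper's version is shorter once those two facts are available.

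A minor point: the appeals to ``optional sampling'' in your second part are unnecessary. Since $H_k1_{\{\tau>m\}}$ is $\F_m$-measurable, both displayed equalities there are just the definitions of the respective densities at time $m$.
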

\begin{proof}
We first prove (\ref{thm2.12-eq2}). Since the initial path segment from $x\neq 0$ to the origin is not counted as an excursion, we have $L_\tau^{A\circlearrowleft}=0$ and $(S_\tau,V_\tau)=(0,0)$ on $\{\tau<\infty\}$. Thus, applying the optional sampling theorem at $\tau\wedge n$, we have
   \begin{align*}
    \P_{x,0}^{A\updownarrow}\Big[G,\ \tau\le n\Big]&=\frac{1}{\widetilde{h}^A(x,0)}\cdot\P_{x}\Big[G1_{\{\tau\le n\}}\cdot \widetilde{h}^A(S_n,V_n)1_{\{L_n^{A\circlearrowleft}=0\}}\Big]\\
    &=\frac{1}{1+p_Aa(x)}\cdot\P_{x}\Big[G1_{\{\tau\le n\}}\cdot \widetilde{h}^A(S_\tau,V_\tau)1_{\{L_\tau^{A\circlearrowleft}=0\}}\Big]\\
    &=\frac{1}{1+p_Aa(x)}\cdot\P_{x}\Big[G,\ \tau\le n\Big]
\stepcounter{equation}\tag{\theequation} 
  \end{align*}
for $n\ge 0$. Letting $n\to\infty$ and using recurrence and the bounded convergence theorem, we obtain
  \begin{align}
  \label{thm2.12-eq1}
    \P_{x,0}^{A\updownarrow}\Big[G,\ \tau<\infty\Big]=\frac{1}{1+p_Aa(x)}\cdot \P_{x}[G].
  \end{align}
The strong Markov property then gives
  \begin{align*}
  \P_{x,0}^{A\updownarrow}\Big[G\cdot H\circ \theta_\tau,\ \tau<\infty\Big]&=\P_{x,0}^{A\updownarrow}\Big[G,\ \tau<\infty\Big]\cdot\P_{0,0}^{A\updownarrow}[H]=\frac{1}{1+p_Aa(x)}\cdot \P_{x}[G]\cdot \P_{0,0}^{A\updownarrow}[H].
\stepcounter{equation}\tag{\theequation} 
  \end{align*}
This proves (\ref{thm2.12-eq2}).

Next, we prove the second assertion. For $n\ge 0$, let $H_n$ be a bounded $\F_n$-measurable random variable. By (\ref{thm2.12-eq1}), we have
\begin{align}
  \P_{x,0}^{A\updownarrow}(\tau=\infty)=1-\P_{x,0}^{A\updownarrow}(\tau<\infty)=\frac{p_Aa(x)}{1+p_Aa(x)}.
\end{align}
Thus, by the Markov property of $(S,V)$, we have
\begin{align*}
  \P_{x,0}^{A\updownarrow}\Big[H_n,\ \tau=\infty\Big]&=\P_{x,0}^{A\updownarrow}\Big[H_n1_{\{\tau> n\}}\cdot \P_{S_n,0}^{A\updownarrow}(\tau=\infty)\Big]\\
  &=\P_{x,0}^{A\updownarrow}\left[H_n1_{\{\tau> n\}}\cdot \frac{p_Aa(S_n)}{1+p_Aa(S_n)}\right]\\
    &=\P_{x}\left[H_n1_{\{\tau> n\}}\cdot \frac{p_Aa(S_n)}{1+p_Aa(S_n)}\cdot \frac{\widetilde{h}^A(S_n,0)}{\widetilde{h}^A(x,0)}1_{\{L_n^{A\circlearrowleft}=0\}}\right]\\
          &=\frac{p_Aa(x)}{1+p_Aa(x)}\cdot \P_{x}\left[H_n\cdot \frac{a(S_n)}{a(x)}1_{\{L_n=0\}}\right]\\
           &=\frac{p_Aa(x)}{1+p_Aa(x)}\cdot \P_{x}^{\updownarrow} \left[H_n\right].
\stepcounter{equation}\tag{\theequation} 
  \end{align*}
By the monotone class theorem, this identity extends to every bounded $\F_\infty$-measurable random variable $H$. This completes the proof.
\end{proof}

This completes the path decomposition of the penalized process under $\Q_x^{f,A\circlearrowleft}$.


\section{Penalization by Edge-Crossing Counts}
\label{S3}
In this section, we study the penalization problem with weights of the form $\Gamma=f(N^B)$
and derive a path decomposition for the penalized process.

\subsection{Penalization Limit}
\label{S3.1}
In this subsection, we prove Theorem \ref{thm1.2}.

Let $e_1,...,e_4$ denote the four unoriented edges incident to the origin, with $e_i:=\{0,v_i\}$. Fix a nonempty subset $B\subseteq \{e_1,e_2,e_3,e_4\}.$ We refer to the edges in $B$ as $B$-edges. Recall that $N^B=(N_n^B)_{n\ge0}$ counts the total number of crossings of $B$-edges:
\begin{align}
  N_0^B:=0,\qquad N_n^B:=\sum_{k=1}^n 1_{\{\{S_{k-1},S_k\}\in B\}}\ \text{for}\ n\ge 1 .
\end{align}
Set $\varsigma_0:=0$. For $k\ge 1$, let $\varsigma_k$ denote the time of the $k$-th crossing of a $B$-edge:
\begin{align}
\varsigma_k:=\inf\Big\{n>\varsigma_{k-1}:\ \{S_{n-1},S_n\}\in B\Big\}.
\end{align}
Let $\widetilde{B}$ denote the set of nonzero endpoints of the edges belonging to $B$:
\begin{align}
  \widetilde{B}:=\Big\{v\in \{v_1,v_2,v_3,v_4\}:\ \{0,v\}\in B\Big\}.
\end{align}
For $n\ge 0$ and $x\in \Z^2$, we define
  \begin{align}
   u_n(x)&:=\P_x\Big(S_n=0,\ \varsigma_1>n\Big),\qquad v_n(x):=\P_x\Big(S_n\in \widetilde{B},\ \varsigma_1>n\Big).
  \end{align}
Define the corresponding generating functions by
  \begin{align}
   U_x(\xi):=\sum_{n=0}^\infty u_n(x)\xi^n,\qquad V_x(\xi):=\sum_{n=0}^\infty v_n(x)\xi^n\qquad \text{for}\ 0\le \xi<1.
 \end{align}

The following lemma expresses $U_x$ and $V_x$ in terms of the functions $G_y$ defined in (\ref{def-Green}).

\begin{lem} \label{lem:UandV}
For $x\in \Z^2$ and $0\le \xi<1$, we have
\begin{align}
\label{Uxq-def}
U_x(\xi)&=\frac{1}{\Delta_B(\xi)}\left(\left(1+\frac{|B|\xi}{4}G_{v_1}(\xi)\right)G_x(\xi)-\frac{\xi}{4}G_0(\xi)\Sigma_{x,B}(\xi)\right),\\
\label{Vxq-def}
   V_x(\xi)&=\frac{1}{\Delta_B(\xi)}\left(\left(1+\frac{|B|\xi}{4}G_{v_1}(\xi)\right)\Sigma_{x,B}(\xi)-\frac{\xi}{4}\Sigma_{B,B}(\xi)G_x(\xi)\right),
\end{align}
where 
\begin{align}
\Sigma_{x,B}(\xi)&:=\sum_{v\in \widetilde{B}}G_{x-v}(\xi),\\
\Sigma_{B,B}(\xi)&:=\sum_{v,w\in \widetilde{B}}G_{v-w}(\xi),\\
\label{DeltaBq-def}
  \Delta_B(\xi)&:=\left(1+\frac{|B|\xi}{4}G_{v_1}(\xi)\right)^2-\frac{\xi}{4}G_0(\xi)\left(\frac{\xi}{4}\Sigma_{B,B}(\xi)\right).
\end{align}
\end{lem}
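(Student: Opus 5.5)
We will derive a closed linear system for the generating functions by a last-crossing (first-crossing) decomposition of the unrestricted Green's function. For any $x$ and target $y\in\{0\}\cup\widetilde{B}$, split the event $\{S_n=y\}$ according to whether a $B$-edge has been crossed by time $n$. If $\varsigma_1\le n$, then at time $\varsigma_1-1$ the walk sits at one endpoint of a $B$-edge and at time $\varsigma_1$ at the other. If the walk is at $0$ (before any crossing), it crosses a $B$-edge into $v\in\widetilde B$ with probability $1/4$ each. If it is at $v\in\widetilde B$, it crosses into $0$ with probability $1/4$. The strong Markov property at $\varsigma_1$ then gives, coefficientwise in $\xi$,
\begin{align*}
G_{x-y}(\xi)=U^{(y)}_x(\xi)+\frac{\xi}{4}\sum_{v\in\widetilde B}\Big(\text{gen. fn. of }\P_x(S_m=0,\varsigma_1>m)\Big)G_{v-y}(\xi)+\frac{\xi}{4}\sum_{v\in\widetilde B}\Big(\text{gen. fn. of }\P_x(S_m=v,\varsigma_1>m)\Big)G_{-y}(\xi).
\end{align*}
Here $U^{(y)}_x$ is the restricted Green's function at $y$, and we use $G_{z-y}=G_{y-z}$ by symmetry.

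Taking $y=0$ gives the first equation:
\[
G_x=U_x+\frac{\xi}{4}U_x\sum_{v\in\widetilde B}G_v+\frac{\xi}{4}V_xG_0.
\]
Using $G_v=G_{v_1}$ for every neighbor $v$ (lattice symmetry), this becomes
\[
G_x=\Big(1+\tfrac{|B|\xi}{4}G_{v_1}\Big)U_x+\tfrac{\xi}{4}G_0V_x.
\]
Summing the decomposition over $y\in\widetilde B$ gives the second equation:
\[
\Sigma_{x,B}=\Big(1+\tfrac{|B|\xi}{4}G_{v_1}\Big)V_x+\tfrac{\xi}{4}\Sigma_{B,B}U_x.
\]
To get this, the left side is $\sum_{y}G_{x-y}=\Sigma_{x,B}$. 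The restricted part summed over $y$ is $V_x$. The term from crossings out of $0$ is $\frac{\xi}{4}U_x\sum_{v,y}G_{v-y}=\frac{\xi}{4}\Sigma_{B,B}U_x$. The term from crossings into $0$ is $\frac{\xi}{4}V_x\sum_yG_{y}=\frac{\xi}{4}|B|G_{v_1}V_x$.

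The key point is that $U_x$ and $V_x$ appear only through these two aggregated quantities. This works because every crossing out of $0$ lands uniformly over $\widetilde B$, and every crossing from $\widetilde B$ lands at $0$, with no dependence on which $v$ is involved.

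Solving this $2\times 2$ system by Cramer's rule, with determinant $(1+\frac{|B|\xi}{4}G_{v_1})^2-\frac{\xi}{4}G_0\cdot\frac{\xi}{4}\Sigma_{B,B}=\Delta_B(\xi)$, yields exactly (\ref{Uxq-def}) and (\ref{Vxq-def}).

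The main obstacle is justifying $\Delta_B(\xi)\neq0$ for $0\le\xi<1$. The plan is to avoid a direct computation. All series converge absolutely for $\xi<1$, since each is bounded by $\sum\xi^n$. Hence $(U_x,V_x)$ is some solution of the system for every $x$. If $\Delta_B(\xi_0)=0$ at some $\xi_0$, the two equations would be proportional for all $x$, and taking $x=0$ and $x$ far away (where $G_x$ and $\Sigma_{x,B}$ have different ratios) gives a contradiction. Alternatively, $\Delta_B(0)=1$, and continuity together with positivity of $U_0$ at every $\xi$ could be used.

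Finally, the decomposition itself must be checked carefully at the step $\varsigma_1$. The walk is at an endpoint at time $\varsigma_1-1\le n-1$, is counted in $u$ or $v$ there, and the factor $\xi$ accounts for the crossing step. The case $x\in\widetilde B$ or $x=0$ at time $0$ is included with $n=0$ terms.
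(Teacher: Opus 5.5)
Your derivation of the $2\times2$ system and the Cramer's-rule step are exactly what the paper does. You use a first-crossing decomposition of $\P_x(S_n=y)$, and sum over $y\in\widetilde{B}$ for the second equation. The one step you do not actually establish is $\Delta_B(\xi)\neq0$, which the statement needs.

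Your first argument rests on the claim that $\Sigma_{x,B}(\xi)/G_x(\xi)$ takes a different value at $x=0$ than at some far-away $x$. You give no proof of this, and none is needed. Your second argument names continuity and ``positivity of $U_0$''. But positivity alone does not exclude $\Delta_B$ touching zero. What makes it work is the identity $U_0=G_0/\Delta_B$, which you never write down, and once you have it no continuity argument is needed.

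The fix, which is the paper's argument, uses only $x=0$. There $G_x=G_0$ and $\Sigma_{0,B}=|B|G_{v_1}$. Set $c:=1+\frac{|B|\xi}{4}G_{v_1}(\xi)$. Multiply your first equation by $c$ and subtract $\frac{\xi}{4}G_0(\xi)$ times the second. This eliminates $V_0$ and gives
\begin{align*}
\Delta_B(\xi)U_0(\xi)=cG_0(\xi)-\frac{|B|\xi}{4}G_{v_1}(\xi)G_0(\xi)=G_0(\xi).
\end{align*}
This identity between finite numbers holds without assuming invertibility. Since $1\le U_0(\xi)\le G_0(\xi)<\infty$, it yields $\Delta_B(\xi)=G_0(\xi)/U_0(\xi)\ge1$.

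Equivalently, within your own framework: if $\Delta_B(\xi)=0$, solvability of the system at $x=0$ alone forces $c=\frac{|B|\xi}{4}G_{v_1}(\xi)$, i.e.\ $1=0$. So comparison with a far-away $x$ is superfluous.
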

\begin{proof}
Decomposing according to the first crossing of a $B$-edge and using the Markov property and symmetry, we obtain
  \begin{align*}
  \label{lem3.1-eq1}
 u_n(x)&=\P_x(S_n=0)-\sum_{k=1}^n\P_x\Big(S_n=0,\ \varsigma_1=k\Big)\\
 &=\P_x(S_n=0)-\sum_{k=1}^n\left\{\P_x\Big(S_{k-1}=0,\ S_k\in \widetilde{B},\ S_n=0,\ \varsigma_1>k-1\Big)\right.\\
 &\qquad \qquad \qquad \qquad \qquad \qquad \left.+\P_x\Big(S_{k-1}\in \widetilde{B},\ S_k=0,\ S_n=0,\ \varsigma_1>k-1\Big)\right\}\\
 &=\P_x(S_n=0)-\sum_{k=1}^n\left\{\frac{|B|}{4}u_{k-1}(x)\P_{v_1}(S_{n-k}=0)+\frac{1}{4}v_{k-1}(x)\P_0(S_{n-k}=0)\right\}.
   \stepcounter{equation}\tag{\theequation} 
 \end{align*}
Similarly, we have
 \begin{align*}
 \label{lem3.1-eq2}
 v_n(x)&=\P_x(S_n\in \widetilde{B})-\sum_{k=1}^n \P_x\Big(S_n\in \widetilde{B},\ \varsigma_1=k\Big)\\
 &=\sum_{v\in \widetilde{B}}\P_{x-v}(S_n=0)-\sum_{k=1}^n \left\{\P_x\Big(S_{k-1}=0,\ S_k\in \widetilde{B},\ S_n\in \widetilde{B},\ \varsigma_1>k-1
\Big)\right.\\
 &\qquad \qquad \qquad \qquad \ \ \qquad \qquad \left.+\P_x\Big(S_{k-1}\in \widetilde{B},\ S_k=0,\ S_n\in \widetilde{B},\ \varsigma_1>k-1
\Big)\right\}\\
 &=\sum_{v\in \widetilde{B}}\P_{x-v}(S_n=0)\\
 &\qquad \qquad -\sum_{k=1}^n \left\{\frac{1}{4}u_{k-1}(x)\sum_{v,w\in \widetilde{B}}\P_{v-w}(S_{n-k}=0)+\frac{|B|}{4}v_{k-1}(x)\P_{v_1}(S_{n-k}=0)\right\}.
   \stepcounter{equation}\tag{\theequation} 
 \end{align*}
Recall that $G_x$ is the generating function of the sequence $(\P_x(S_n=0))_{n\ge0}$. Taking generating functions in (\ref{lem3.1-eq1}) and (\ref{lem3.1-eq2}) and using the convolution identity, we obtain
  \begin{align}
  \label{lem3.1-eq3}
   U_x(\xi)&=G_x(\xi)-\frac{|B|\xi}{4}U_x(\xi)G_{v_1}(\xi)-\frac{\xi}{4}V_x(\xi)G_0(\xi),\\
   \label{lem3.1-eq4}
   V_x(\xi)&=\Sigma_{x,B}(\xi)-\frac{\xi}{4}U_x(\xi)\Sigma_{B,B}(\xi)-\frac{|B|\xi}{4}V_{x}(\xi)G_{v_1}(\xi).
 \end{align}
Set $x=0$ in these two equations. Multiplying the former by $1+\frac{|B|\xi}{4}G_{v_1}(\xi)$ and subtracting $\frac{\xi}{4}G_0(\xi)$ times the latter gives
\begin{align*}
\label{DeltaU-G}
\Delta_B(\xi)U_0(\xi)=G_0(\xi).     \stepcounter{equation}\tag{\theequation} 
 \end{align*}
 Since $1\le U_0(\xi)\le G_0(\xi)<\infty$, we obtain $\Delta_B(\xi)>0$ for $0\le \xi<1$. Now, for $x\in\Z^2$, equations (\ref{lem3.1-eq3}) and (\ref{lem3.1-eq4}) can be rewritten in matrix form as
  \begin{align}
   \begin{pmatrix}
    \displaystyle 1+\frac{|B|\xi}{4}G_{v_1}(\xi) & \displaystyle \frac{\xi}{4}G_0(\xi) \\[10pt]
    \displaystyle \frac{\xi}{4}\Sigma_{B,B}(\xi) &  \displaystyle 1+\frac{|B|\xi}{4}G_{v_1}(\xi)
   \end{pmatrix}
   \begin{pmatrix}
     U_x(\xi)\\[10pt]
     V_x(\xi)
   \end{pmatrix}
   =
   \begin{pmatrix}
     G_x(\xi)\\[10pt]
     \Sigma_{x,B}(\xi)
   \end{pmatrix}.
 \end{align}
 Since the determinant of the matrix on the left-hand side is $\Delta_B(\xi)>0$, the matrix is invertible. Hence, the system can be solved, and the assertion follows.
\end{proof}

We next compute the generating function of $\varsigma_k$.

\begin{lem}
For $x\in \Z^2$, $k\ge 1$, and $0\le \xi< 1$, we have
\begin{align}
\label{lem3.2-maineq}
\P_x[\xi^{\varsigma_k}]=\begin{pmatrix}
  \displaystyle \frac{\xi}{4}V_x(\xi) & \displaystyle\frac{\xi}{4}U_x(\xi)
\end{pmatrix}M_B(\xi)^{k-1}
   \begin{pmatrix}
     1\\[10pt]
     |B|
   \end{pmatrix},
\end{align}
where
\begin{align}
  M_B(\xi):=\begin{pmatrix}
     \displaystyle \frac{\xi}{4}V_0(\xi) & \displaystyle\frac{\xi}{4}U_0(\xi)\\[10pt]
   \displaystyle  \frac{\xi}{4\Delta_B(\xi)}\Sigma_{B,B}(\xi) & \displaystyle\frac{\xi}{4}V_0(\xi) 
   \end{pmatrix}.
\end{align}
\end{lem}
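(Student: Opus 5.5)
The plan is a first-step decomposition at crossing times, organized as a two-state Markov chain. At each crossing time $\varsigma_m$ the walk is in one of two states. Either $S_{\varsigma_m}=0$, meaning it has just traversed a $B$-edge from some $v\in\widetilde B$ to the origin. Or $S_{\varsigma_m}\in\widetilde B$, meaning it has just traversed a $B$-edge from the origin outward.

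The key structural observation is about the second state. Whenever the walk lands in $\widetilde B$ at a crossing time, it came from $0$ and chose one of the $|B|$ edges of $B$ with probability $1/4$ each. So, conditionally on landing in $\widetilde B$, the landing point is uniform on $\widetilde B$, independently of the past. Hence it suffices to track the pair
\[
\bigl(\P_x[\xi^{\varsigma_m},S_{\varsigma_m}=0],\ \P_x[\xi^{\varsigma_m},S_{\varsigma_m}\in\widetilde B]\bigr).
\]
I will write it as $(a_m,|B|b_m)$, where $b_m$ is the mass per point of $\widetilde B$. The claim (\ref{lem3.2-maineq}) is then that $(a_k,b_k)$ is the row vector appearing before the final column, and $\P_x[\xi^{\varsigma_k}]=a_k+|B|b_k$.

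\emph{Step 1 (first crossing).} Decompose according to the time $n=\varsigma_1-1$ and apply the Markov property at $n$.
\begin{itemize}
\item The crossing ends at $0$ exactly when $S_n\in\widetilde B$, $\varsigma_1>n$ and the next step goes to $0$, which has probability $1/4$. This gives $a_1=\sum_n v_n(x)\xi^{n+1}/4=\frac{\xi}{4}V_x(\xi)$.
\item The crossing ends in $\widetilde B$ exactly when $S_n=0$, $\varsigma_1>n$ and the next step goes to a given $w\in\widetilde B$. This gives, per point, $b_1=\frac{\xi}{4}U_x(\xi)$.
\end{itemize}
This is the case $k=1$.

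\emph{Step 2 (transition).} Apply the strong Markov property at $\varsigma_m$.
\begin{itemize}
\item From state $0$, Step 1 with $x=0$ gives the contribution $a_m\,(\frac{\xi}{4}V_0,\frac{\xi}{4}U_0)$, which is the first row of $M_B$.
\item From state $\widetilde B$, by uniformity the contribution is $b_m\sum_{w\in\widetilde B}(\frac{\xi}{4}V_w,\frac{\xi}{4}U_w)$.
\end{itemize}
Thus $(a_{m+1},b_{m+1})=(a_m,b_m)M_B(\xi)$ holds as soon as I verify
\[
\sum_{w\in\widetilde B}V_w(\xi)=\frac{\Sigma_{B,B}(\xi)}{\Delta_B(\xi)},\qquad \sum_{w\in\widetilde B}U_w(\xi)=V_0(\xi).
\]
Iterating then gives (\ref{lem3.2-maineq}).

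\emph{Step 3 (the identities).} This is the only computational point, and it follows from Lemma \ref{lem:UandV}. Write $c:=\frac{|B|\xi}{4}G_{v_1}(\xi)$.
\begin{itemize}
\item By lattice symmetry $G_w=G_{v_1}$ for every $w\in\widetilde B$. Hence $\sum_w G_w=|B|G_{v_1}=\Sigma_{0,B}$, using $G_{-v}=G_v$.
\item By definition, $\sum_w\Sigma_{w,B}=\Sigma_{B,B}$.
\end{itemize}
Summing (\ref{Uxq-def}) over $x=w\in\widetilde B$ gives $\Delta_B^{-1}\bigl((1+c)\Sigma_{0,B}-\frac{\xi}{4}G_0\Sigma_{B,B}\bigr)$. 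This is exactly $V_0$ by (\ref{Vxq-def}) at $x=0$. Summing (\ref{Vxq-def}) over $w\in\widetilde B$ gives $\Delta_B^{-1}\bigl((1+c)\Sigma_{B,B}-\frac{\xi}{4}\Sigma_{B,B}|B|G_{v_1}\bigr)=\Sigma_{B,B}/\Delta_B$.

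The main care needed is in Step 2. I must justify that the landing point in $\widetilde B$ is uniform and independent of the past, so that averaging over $w$ is legitimate. The only other subtlety is the bookkeeping of the factor $|B|$, which appears in the final column vector rather than in the matrix.
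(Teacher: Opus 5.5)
Your proof is correct and uses the same ingredients as the paper: the first-crossing transforms $\frac{\xi}{4}V_x(\xi)$ and $\frac{\xi}{4}U_x(\xi)$, the strong Markov property at crossing times, and the two summation identities $\sum_{w\in\widetilde B}U_w(\xi)=V_0(\xi)$ and $\sum_{w\in\widetilde B}V_w(\xi)=\Sigma_{B,B}(\xi)/\Delta_B(\xi)$, both obtained from Lemma~\ref{lem:UandV}. The only difference is direction: you condition at $\varsigma_m$ and iterate the row vector $(a_m,b_m)$ forward from $x$, while the paper conditions at $\varsigma_1$, iterates the column vector $\bigl(\P_0[\xi^{\varsigma_k}],\sum_{v\in\widetilde B}\P_v[\xi^{\varsigma_k}]\bigr)^{T}$ backward, and then substitutes into the one-step identity for general $x$; your version has the small bonus of giving the joint transform of $(\varsigma_k,S_{\varsigma_k})$ directly.
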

\begin{proof}
Applying the Markov property at time $n-1$ to each summand, we obtain
\begin{align*}
\label{Lem3.2-eqA}
\P_x\Big[\xi^{\varsigma_1},\ S_{\varsigma_1}=0\Big]&=\sum_{n=1}^\infty \xi^n\P_x\Big(S_{n}=0,\ \varsigma_1=n\Big)\\
&=\frac{1}{4}\sum_{n=1}^\infty \xi^n\P_x\Big(S_{n-1}\in \widetilde{B},\ \varsigma_1>n-1\Big)\\
&=\frac{\xi}{4}V_x(\xi).
  \stepcounter{equation}\tag{\theequation} 
\end{align*}
For $v\in\widetilde{B}$, the same argument gives
\begin{align*}
\label{Lem3.2-eqB}
 \P_x\Big[\xi^{\varsigma_1},\ S_{\varsigma_1}=v\Big]&=\sum_{n=1}^\infty \xi^n\P_x\Big(S_{n}=v,\ \varsigma_1=n\Big)\\
&=\frac{1}{4}\sum_{n=1}^\infty \xi^n\P_x\Big(S_{n-1}=0,\ \varsigma_1>n-1\Big)\\
&=\frac{\xi}{4}U_x(\xi).
  \stepcounter{equation}\tag{\theequation} 
 \end{align*}
For $k\ge0$, the strong Markov property at $\varsigma_1$ gives
 \begin{align*}
 \label{lem3.2-eq1}
\P_x[\xi^{\varsigma_{k+1}}]&=\P_x\Big[\xi^{\varsigma_1}\cdot \P_{S_{\varsigma_1}}[\xi^{\varsigma_k}]\Big]\\
 &=\P_x\Big[\xi^{\varsigma_1},\ S_{\varsigma_1}=0\Big]\P_0[\xi^{\varsigma_k}]+\sum_{v\in \widetilde{B}}\P_x\Big[\xi^{\varsigma_1},\ S_{\varsigma_1}=v\Big]\P_{v}[\xi^{\varsigma_k}]\\
 &=\frac{\xi}{4}V_x(\xi)\P_0[\xi^{\varsigma_k}]+\frac{\xi}{4}U_x(\xi)\sum_{v\in \widetilde{B}}\P_{v}[\xi^{\varsigma_k}].
  \stepcounter{equation}\tag{\theequation}  
 \end{align*}
 In particular, when $x=0$, we have
 \begin{align*}
 \label{lem3.2-eq2}
 \P_0[\xi^{\varsigma_{k+1}}]=\frac{\xi}{4}V_0(\xi)\P_0[\xi^{\varsigma_k}]+\frac{\xi}{4}U_0(\xi)\sum_{v\in \widetilde{B}}\P_{v}[\xi^{\varsigma_k}].
  \stepcounter{equation}\tag{\theequation}  
 \end{align*}
 Substituting $x=v$ in (\ref{lem3.2-eq1}) and summing over $v\in \widetilde{B}$, {by applying Lemma \ref{lem:UandV}}, we obtain
 \begin{align*}
 \label{lem3.2-eq3}
 \sum_{v\in \widetilde{B}}\P_v[\xi^{\varsigma_{k+1}}]&=\frac{\xi}{4}\sum_{v\in \widetilde{B}}V_v(\xi)\P_0[\xi^{\varsigma_k}]+\frac{\xi}{4}\left(\sum_{v\in \widetilde{B}}U_v(\xi)\right)\left(\sum_{v\in \widetilde{B}}\P_{v}[\xi^{\varsigma_k}]\right)\\
 &=\frac{\xi}{4\Delta_B(\xi)}\Sigma_{B,B}(\xi)\P_0[\xi^{\varsigma_k}]+\frac{\xi}{4}V_0(\xi)\sum_{v\in \widetilde{B}}\P_{v}[\xi^{\varsigma_k}].
  \stepcounter{equation}\tag{\theequation}  
 \end{align*}
 Therefore, equations (\ref{lem3.2-eq2}) and (\ref{lem3.2-eq3}) yield the matrix recursion
 \begin{align}
   \begin{pmatrix}
     \P_0[\xi^{\varsigma_{k+1}}]\\[10pt]
     \displaystyle\sum_{v\in \widetilde{B}}\P_v[\xi^{\varsigma_{k+1}}]
   \end{pmatrix}=
   M_B(\xi)
   \begin{pmatrix}
     \P_0[\xi^{\varsigma_{k}}]\\[10pt]
     \displaystyle\sum_{v\in \widetilde{B}}\P_v[\xi^{\varsigma_{k}}]
   \end{pmatrix}.
 \end{align}
 Since $\varsigma_0=0$, iterating this yields
  \begin{align}
  \label{Lem3.2-eqC}
      \begin{pmatrix}
     \P_0[\xi^{\varsigma_{k}}]\\[10pt]
     \displaystyle\sum_{v\in \widetilde{B}}\P_v[\xi^{\varsigma_{k}}]
   \end{pmatrix}=
   M_B(\xi)^{k}
   \begin{pmatrix}
     1\\[10pt]
     |B|
   \end{pmatrix}.
 \end{align}
 Combining this identity with (\ref{lem3.2-eq1}), we obtain the desired result. This completes the proof.
\end{proof}

Next, we investigate the asymptotic behavior of $\Delta_B(q)$ as $q\to 1-$. Recall that $\kappa_B=\P_0(N_\tau^B\ge 1)$.

\begin{lem}
\label{lem3.3}
We have
\begin{align}
\label{lem3.3-eq1}
  \lim_{q\to 1-}\frac{\Delta_B(q)}{G_0(q)}=\frac{|B|}{2}-\frac{|B|^2}{8}+\frac{1}{16}\sum_{v,w\in \widetilde{B}}a(v-w)=\kappa_B.
\end{align}
\end{lem}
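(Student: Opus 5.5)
The plan is to prove the two equalities in (\ref{lem3.3-eq1}) separately. For the first, expand $\Delta_B(q)$ in powers of $G_0(q)$ and keep the terms of order $G_0(q)$. For the second, reinterpret $\Delta_B(q)/G_0(q)$ probabilistically through the identity (\ref{DeltaU-G}) and pass to the limit by monotone convergence.

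\emph{The first equality.} Conditioning on the first step from $0$ and using symmetry gives the exact identity $G_0(\xi)=1+\xi G_{v_1}(\xi)$. Hence
\begin{align*}
1+\frac{|B|\xi}{4}G_{v_1}(\xi)=1+\frac{|B|}{4}\big(G_0(\xi)-1\big).
\end{align*}
Next write $\Sigma_{B,B}(\xi)=|B|^2G_0(\xi)-D_B(\xi)$, where $D_B(\xi):=\sum_{v,w\in\widetilde B}(G_0(\xi)-G_{v-w}(\xi))$. By (\ref{a-definition}), $D_B(q)\to\sum_{v,w\in\widetilde B}a(v-w)$ as $q\to1-$. Substituting into (\ref{DeltaBq-def}) gives
\begin{align*}
\Delta_B(q)=\frac{|B|^2(1-q^2)}{16}G_0(q)^2+\Big(2\Big(1-\frac{|B|}{4}\Big)\frac{|B|}{4}+\frac{q^2}{16}D_B(q)\Big)G_0(q)+\Big(1-\frac{|B|}{4}\Big)^2 .
\end{align*}
Divide by $G_0(q)$. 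The first term becomes $(1-q^2)G_0(q)|B|^2/16$, which tends to $0$ by (\ref{G_0(q)sim}), since $1-q^2\le 2(1-q)$. The last term tends to $0$ because $G_0(q)\to\infty$. The middle coefficient tends to $\frac{|B|}{2}-\frac{|B|^2}{8}+\frac1{16}\sum_{v,w\in\widetilde B}a(v-w)$, which proves the first equality. The only care needed is exact bookkeeping of the $G_0^2$ cancellation; this is why I use the exact relation $G_{v_1}=(G_0-1)/\xi$ rather than just an asymptotic one.

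\emph{The second equality.} By (\ref{DeltaU-G}), $\Delta_B(q)/G_0(q)=1/U_0(q)$. Since $u_n(0)\ge0$, monotone convergence gives
\begin{align*}
\lim_{q\to1-}U_0(q)=\sum_{n\ge0}\P_0(S_n=0,\ \varsigma_1>n)=\P_0\Big[\#\{n\ge0:\ S_n=0,\ n<\varsigma_1\}\Big].
\end{align*}
This is the number of visits to the origin before the first $B$-crossing, and the visit at time $0$ is counted. Apply the strong Markov property at $\rho_1,\rho_2,\dots$. The excursions are i.i.d. under $\P_0$ by recurrence, and each contains a $B$-crossing with probability $\kappa_B$. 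So the number of completed excursions before the first one that crosses a $B$-edge is geometric, and the count of visits equals $\sum_{m\ge0}(1-\kappa_B)^m=1/\kappa_B$. Here $\kappa_B\ge|B|/4>0$, because the first step already crosses a $B$-edge with probability $|B|/4$. Also, a crossing that occurs during an excursion happens strictly before the return to $0$ that ends it, so visits at the times $\rho_m$ are counted exactly up to that excursion. Hence $\lim_{q\to1-}\Delta_B(q)/G_0(q)=\kappa_B$. I expect no serious obstacle; the main point to get right is the off-by-one in counting the visit at time $0$ and the returns $\rho_m$ relative to $\varsigma_1$.
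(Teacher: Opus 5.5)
Your proof is correct and follows essentially the paper's argument. The first equality comes from expanding $\Delta_B(q)/G_0(q)$ and using $G_0(q)-G_x(q)\to a(x)$; your exact first-step identity $G_0(\xi)=1+\xi G_{v_1}(\xi)$ replaces the paper's use of $G_{v_1}=F_{v_1}G_0$ together with $a(v_1)=1$. The second comes from (\ref{DeltaU-G}) plus a renewal argument at the returns to the origin; you let $q\to1-$ first and sum the geometric series, whereas the paper solves $U_0(q)=1+\P_0[q^\tau,\ \tau<\varsigma_1]\,U_0(q)$ at fixed $q$.

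One remark in your counting step is inaccurate but harmless. A $B$-crossing can be the last step $v_j\to 0$ of an excursion, so $\varsigma_1=\rho_m$ is possible. That visit is excluded anyway, because the count requires $n<\varsigma_1$ strictly, so the expected number of visits is still $\sum_{m\ge0}\P_0(\rho_m<\varsigma_1)=1/\kappa_B$.
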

\begin{proof}
  By (\ref{GxFx-identity}), (\ref{a-definition}), and (\ref{avi=1}), we have
  \begin{align*}
 \frac{\Delta_B(q)}{G_0(q)}&=\frac{1}{G_0(q)}\left\{\left(1+\frac{|B|q}{4}G_{v_1}(q)\right)^2-\frac{q}{4}G_0(q)\left(\frac{q}{4}\sum_{v,w\in \widetilde{B}}G_{v-w}(q)\right)\right\}\\
 &=\frac{1}{G_0(q)}+\frac{|B|q}{2}F_{v_1}(q)+\frac{|B|^2q^2}{16}F_{v_1}(q)G_{v_1}(q)-\frac{q^2}{16}\sum_{v,w\in \widetilde{B}}G_{v-w}(q)\\
 &=\frac{1}{G_0(q)}+\frac{|B|q}{2}F_{v_1}(q)+\frac{|B|^2q^2}{16}F_{v_1}(q)\Big(G_{v_1}(q)-G_0(q)\Big)\\
 &\qquad +\frac{q^2}{16}\sum_{v,w\in \widetilde{B}}\Big(\Big(G_{v_1}(q)-G_0(q)\Big)+\Big(G_0(q)-G_{v-w}(q)\Big)\Big)\\
 &\to \frac{|B|}{2}-\frac{|B|^2}{16}a(v_1)+\frac{1}{16}\sum_{v,w\in \widetilde{B}}\Big(a(v-w)-a(v_1)\Big)\\
  &= \frac{|B|}{2}-\frac{|B|^2}{8}+\frac{1}{16}\sum_{v,w\in \widetilde{B}}a(v-w)
  \stepcounter{equation}\tag{\theequation}   
  \end{align*}
  as $q\to 1-$. This proves the first equality in (\ref{lem3.3-eq1}). 

To prove the second equality in (\ref{lem3.3-eq1}), we apply the strong Markov property at $\tau$ to obtain
\begin{align*}
U_0(q)&=\sum_{n=0}^\infty \P_0\Big(S_n=0,\ \varsigma_1>n\Big)q^n\\
&=1+\sum_{n=1}^\infty \sum_{k=1}^n\P_0\Big(S_n=0,\ \varsigma_1>n,\ \tau=k\Big)q^n\\
&=1+\sum_{n=1}^\infty \sum_{k=1}^n\P_0\Big(S_{n-k}=0,\ \varsigma_1>n-k\Big)\P_0\Big(\tau=k,\ k<\varsigma_1\Big)q^n\\
&=1+\sum_{k=1}^\infty \sum_{n=k}^\infty\P_0\Big(S_{n-k}=0,\ \varsigma_1>n-k\Big)\P_0\Big(\tau=k,\ k<\varsigma_1\Big)q^{n-k}q^k\\
&=1+\P_0\Big[q^\tau,\ \tau<\varsigma_1\Big]U_0(q).
\stepcounter{equation}\tag{\theequation}   
\end{align*}
Thus, by (\ref{DeltaU-G}), we have
\begin{align}
  \frac{\Delta_B(q)}{G_0(q)}=\frac{1}{U_0(q)}=1-\P_0\Big[q^\tau,\ \tau<\varsigma_1\Big]\to \P_0(\varsigma_1\le \tau)=\kappa_B
\end{align}
as $q\to 1-$. This completes the proof.
\end{proof}

Equation (\ref{lem3.3-eq1}) gives an explicit formula for $\kappa_B$. Its values for the possible configurations of $B$ are computed in Appendix \ref{AppendixB}.

Next, we determine the limits of $U_x(q)$ and $V_x(q)$ as $q\to 1-$.

\begin{lem}
\label{lem3.4}
For $x\in \Z^2$, we have
\begin{align}
\label{Def-Ux}
  \lim_{q\to 1-}U_x(q)&=\frac{1}{\kappa_B}\left(1-\frac{|B|}{4}-\frac{|B|}{4}a(x)+\frac{1}{4}\Sigma_{x,B}^a\right)=:U_x,\\
  \lim_{q\to 1-}V_x(q)&=\frac{1}{\kappa_B}\left(|B|-\frac{|B|^2}{4}+\frac{|B|^2}{4}a(x)-\frac{|B|}{4}\Sigma_{x,B}^a+\frac{1}{4}\Sigma_{B,B}^a\right)=:V_x,
\end{align}
where 
\begin{align}
 \Sigma_{x,B}^a:=\sum_{v\in \widetilde{B}}a(x-v),\qquad \Sigma_{B,B}^a:=\sum_{v,w\in \widetilde{B}}a(v-w).
\end{align}
\end{lem}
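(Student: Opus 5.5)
The plan is to start from the explicit formulas (\ref{Uxq-def}) and (\ref{Vxq-def}) of Lemma \ref{lem:UandV}, divide numerator and denominator by $G_0(q)$, and pass to the limit. The denominator is handled by Lemma \ref{lem3.3}: $\Delta_B(q)/G_0(q)\to\kappa_B>0$. It then remains to show that
\begin{align*}
\frac{1}{G_0(q)}\left(\Big(1+\tfrac{|B|q}{4}G_{v_1}(q)\Big)G_x(q)-\tfrac{q}{4}G_0(q)\Sigma_{x,B}(q)\right)
\end{align*}
and the analogous numerator for $V_x$ converge to the bracketed expressions in the statement. The only inputs are $G_y(q)=G_0(q)-(G_0(q)-G_y(q))$ with $G_0(q)-G_y(q)\to a(y)$ by (\ref{a-definition}), the identity $a(v_1)=1$ from (\ref{avi=1}), and $G_0(q)\to\infty$ from (\ref{G_0(q)sim}).

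For $U_x$, the key point is that the product $G_{v_1}(q)G_x(q)$ is of order $G_0(q)^2$, and it must cancel against $G_0(q)\Sigma_{x,B}(q)$. Write $G_{v_1}=G_0-\alpha$, $G_x=G_0-\beta_x$ and $G_{x-v}=G_0-\gamma_{x,v}$, where $\alpha\to a(v_1)=1$, $\beta_x\to a(x)$ and $\gamma_{x,v}\to a(x-v)$. Then
\begin{align*}
\tfrac{|B|q}{4}G_{v_1}G_x-\tfrac{q}{4}G_0\Sigma_{x,B}=\tfrac{q}{4}\Big(|B|(G_0-\alpha)(G_0-\beta_x)-\textstyle\sum_{v\in\widetilde B}G_0(G_0-\gamma_{x,v})\Big).
\end{align*}
The $G_0^2$ terms cancel, leaving $\tfrac q4\big(-|B|G_0(\alpha+\beta_x)+|B|\alpha\beta_x+G_0\sum_v\gamma_{x,v}\big)$. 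Adding $G_x$ and dividing by $G_0$, the remaining bounded terms divided by $G_0$ vanish in the limit. We therefore obtain $1-\tfrac{|B|}{4}(1+a(x))+\tfrac14\Sigma^a_{x,B}$. Dividing by $\kappa_B$ gives $U_x$.

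For $V_x$ the same bookkeeping applies. Expanding $(1+\tfrac{|B|q}{4}G_{v_1})\Sigma_{x,B}$ and $\tfrac q4\Sigma_{B,B}G_x$ with $G_{v-w}=G_0-\delta_{v,w}$, where $\delta_{v,w}\to a(v-w)$, the $G_0^2$ coefficients are $\tfrac{q}{4}|B|\cdot|B|$ and $\tfrac q4|B|^2$, and these cancel. The surviving $G_0$-order terms are $|B|G_0$ from $\Sigma_{x,B}$, together with $\tfrac q4 G_0\big(-|B|^2\alpha-|B|\sum_v\gamma_{x,v}+\sum_{v,w}\delta_{v,w}+|B|^2\beta_x\big)$. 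Dividing by $G_0$ gives $|B|-\tfrac{|B|^2}{4}-\tfrac{|B|}{4}\Sigma^a_{x,B}+\tfrac14\Sigma^a_{B,B}+\tfrac{|B|^2}{4}a(x)$, which matches the stated $V_x$ after division by $\kappa_B$.

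There is no real obstacle beyond careful cancellation of the leading $G_0^2$ terms. One must check that the remainder products, for example $\alpha\beta_x$ and $\alpha\gamma_{x,v}$, are $O(1)$ and hence negligible after division by $G_0$. This is immediate since each factor converges. The case $x=0$ is included, since then $\beta_0=0$ and $a(0)=0$.
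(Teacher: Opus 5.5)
Your argument is correct and is essentially the paper's proof. For $x\neq 0$ the paper performs the same cancellation of the $G_0(q)^2$ terms, writing $G_x(q)=F_x(q)G_0(q)$ with $F_x(q)\to 1$ where you write $G_x(q)=G_0(q)-\beta_x(q)$; the only difference is that the paper treats $x=0$ separately, via $\Delta_B(q)U_0(q)=G_0(q)$ and $\P_0[q^{\varsigma_1}]=\frac{q}{4}\bigl(V_0(q)+|B|U_0(q)\bigr)\to 1$, whereas your single expansion also covers $x=0$ because $G_0(q)-G_0(q)=0=a(0)$.
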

\begin{proof}
We first consider the case $x= 0$. Note that $a(0)=0$ and $\Sigma_{0,B}^a=|B|$ by (\ref{avi=1}). By (\ref{DeltaU-G}) and (\ref{lem3.3-eq1}), we have
\begin{align}
  \lim_{q\to 1-}U_0(q)=\lim_{q\to 1-}\frac{G_0(q)}{\Delta_B(q)}=\frac{1}{\kappa_B}=U_0.
\end{align}
By (\ref{Lem3.2-eqA}) and (\ref{Lem3.2-eqB}), we have
\begin{align}
  \P_0[q^{\varsigma_1}]=\frac{q}{4}\Big(V_0(q)+|B|U_0(q)\Big).
\end{align}
Since $\varsigma_1<\infty,\ \P_0$-a.s., we have $\P_0[q^{\varsigma_1}]\to 1$ as $q\to 1-$. Hence, we have
\begin{align}
  \lim_{q\to 1-}V_0(q)=4-|B|U_0=V_0.
\end{align}
This proves the assertions for $x=0.$

We next consider the case $x\neq 0$. By (\ref{GxFx-identity}), (\ref{lem3.3-eq1}), (\ref{a-definition}), and (\ref{avi=1}), we have
      \begin{align*}
U_x(q)&=\frac{G_0(q)}{\Delta_B(q)}\left(\left(1+\frac{|B|q}{4}G_{v_1}(q)\right)F_x(q)-\frac{q}{4}\Sigma_{x,B}(q)\right)\\
&=\frac{G_0(q)}{\Delta_B(q)}\left(F_x(q)+\frac{q}{4}\sum_{v\in \widetilde{B}}\Big(G_{v_1}(q)F_x(q)-G_{x-v}(q)\Big)\right)\\
&\to \frac{1}{\kappa_B}\left(1+\frac{1}{4}\sum_{v\in \widetilde{B}}\Big(-a(v_1)-a(x)+a(x-v)\Big)\right)\\
&=\frac{1}{\kappa_B}\left(1-\frac{|B|}{4}-\frac{|B|}{4}a(x)+\frac{1}{4}\Sigma_{x,B}^a\right)
  \stepcounter{equation}\tag{\theequation}   
  \end{align*}
and
  \begin{align*}
V_x(q)&=\frac{G_0(q)}{\Delta_B(q)}\left(\left(1+\frac{|B|q}{4}G_{v_1}(q)\right)\sum_{v\in \widetilde{B}}\frac{G_{x-v}(q)}{G_0(q)}-\frac{q}{4}F_x(q)\Sigma_{B,B}(q)\right)\\
&=\frac{G_0(q)}{\Delta_B(q)}\left(\sum_{v\in \widetilde{B}}\frac{G_{x-v}(q)}{G_0(q)}+\frac{q}{4}\sum_{v,w\in \widetilde{B}}\left(G_{v_1}(q)\cdot\frac{G_{x-v}(q)}{G_0(q)}-G_{v-w}(q)F_x(q)\right)\right)\\
&\to \frac{1}{\kappa_B}\left(\sum_{v\in \widetilde{B}}1+\frac{1}{4}\sum_{v,w\in \widetilde{B}}\Big(-a(v_1)+a(v-w)-a(x-v)+a(x)\Big)\right)\\
&=\frac{1}{\kappa_B}\left(|B|-\frac{|B|^2}{4}+\frac{|B|^2}{4}a(x)-\frac{|B|}{4}\Sigma_{x,B}^a+\frac{1}{4}\Sigma_{B,B}^a\right).
  \stepcounter{equation}\tag{\theequation}   
  \end{align*}
This completes the proof.
\end{proof}

For the explicit value of $\Sigma_{B,B}^a$, see Table \ref{table} {in Appendix \ref{AppendixB}}. By the argument in the above proof, the limits $U_x$ and $V_x$ satisfy
\begin{align}
\label{Vx+Ux}
  \frac{1}{4}\Big(V_x+|B|U_x\Big)=1.
\end{align}

The next lemma gives the limit of $M_B(q)$ as $q\to1-$, together with the eigenvalues and eigenvectors of the limiting matrix.

\begin{lem}
\label{lem3.5}
  We have
\begin{align}
   \lim_{q\to 1-}M_B(q)=\begin{pmatrix}
     \displaystyle 1-\frac{|B|}{4\kappa_B} & \displaystyle\frac{1}{4\kappa_B}\\[10pt]
   \displaystyle  \frac{|B|^2}{4\kappa_B} & \displaystyle 1-\frac{|B|}{4\kappa_B}
   \end{pmatrix}=:M_B.
\end{align}
Moreover, the eigenvalues of the matrix $M_B$ are 
\begin{align}
  \lambda^B_+:=1,\qquad \lambda^B_-:=1-\frac{|B|}{2\kappa_B}.
\end{align}
Corresponding eigenvectors can be chosen as follows:
\begin{align}
  \bm{e}_+:=\begin{pmatrix}
    1\\[10pt]
    |B|
  \end{pmatrix},\qquad 
  \bm{e}_-:=\begin{pmatrix}
    1\\[10pt]
    -|B|
  \end{pmatrix}.
\end{align}
\end{lem}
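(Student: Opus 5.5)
We compute the limit of $M_B(q)$ entry by entry, using the limits already established in Lemmas \ref{lem3.3} and \ref{lem3.4}. The diagonal entries are $\frac{q}{4}V_0(q)\to \frac14 V_0$. From the proof of Lemma \ref{lem3.4} we have $V_0=4-|B|U_0$ and $U_0=1/\kappa_B$, so the diagonal limit is $1-\frac{|B|}{4\kappa_B}$. The upper-right entry is $\frac{q}{4}U_0(q)\to \frac{1}{4\kappa_B}$.

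The only entry needing more than substitution is the lower-left one, $\frac{q}{4\Delta_B(q)}\Sigma_{B,B}(q)$. Here we write
\begin{align*}
\frac{\Sigma_{B,B}(q)}{\Delta_B(q)}=\frac{G_0(q)}{\Delta_B(q)}\sum_{v,w\in\widetilde{B}}\frac{G_{v-w}(q)}{G_0(q)}.
\end{align*}
By Lemma \ref{lem3.3}, $G_0(q)/\Delta_B(q)\to 1/\kappa_B$. Each ratio $G_{v-w}(q)/G_0(q)$ tends to $1$: for $v=w$ it equals $1$, and for $v\ne w$ it equals $F_{v-w}(q)$ by (\ref{GxFx-identity}), which tends to $1$ by recurrence. (Equivalently, $G_0-G_{v-w}\to a(v-w)$ is finite while $G_0\to\infty$.) The sum therefore converges to $|\widetilde{B}|^2=|B|^2$, and the entry tends to $\frac{|B|^2}{4\kappa_B}$. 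This gives the stated matrix $M_B$.

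For the spectrum, set $\alpha:=1-\frac{|B|}{4\kappa_B}$ and $\beta:=\frac{1}{4\kappa_B}$, so that
\begin{align*}
M_B=\begin{pmatrix}\alpha&\beta\\ |B|^2\beta&\alpha\end{pmatrix}.
\end{align*}
We check the eigenvectors directly:
\begin{align*}
M_B\bm{e}_\pm=\begin{pmatrix}\alpha\pm|B|\beta\\ |B|^2\beta\pm|B|\alpha\end{pmatrix}=(\alpha\pm|B|\beta)\,\bm{e}_\pm .
\end{align*}
Since $\alpha+|B|\beta=1$ and $\alpha-|B|\beta=1-\frac{|B|}{2\kappa_B}$, these are exactly $\lambda_+^B$ and $\lambda_-^B$. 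The vectors $\bm{e}_\pm$ are linearly independent because $|B|\ge1$, so they exhaust the spectrum of the $2\times2$ matrix.

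There is no real obstacle in this argument. The one point needing care is the lower-left entry: there we must divide by $G_0$ rather than use the finite limits $U_x$, $V_x$, because $\Sigma_{B,B}(q)$ itself diverges.
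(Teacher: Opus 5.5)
Your proof is correct and takes essentially the same route as the paper. The paper simply cites Lemmas \ref{lem3.3} and \ref{lem3.4} for the entrywise limits and calls the eigenvalue computation a direct calculation; you fill in those details, in particular the lower-left entry via $G_0(q)/\Delta_B(q)\to 1/\kappa_B$ and $G_{v-w}(q)/G_0(q)\to 1$. (Your closing remark that one \emph{must} divide by $G_0$ is slightly overstated: the identity $\Sigma_{B,B}(q)/\Delta_B(q)=\sum_{v\in\widetilde{B}}V_v(q)$ underlying (\ref{lem3.2-eq3}) also lets you use the finite limits of Lemma \ref{lem3.4} directly, which give $\sum_{v\in\widetilde{B}}V_v=|B|^2/\kappa_B$.)
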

\begin{proof}
  The convergence in the first part follows from Lemmas \ref{lem3.3} and \ref{lem3.4}. The eigenvalues and eigenvectors are obtained by direct calculation.
\end{proof}

For the explicit value of $\lambda_B^-$, see Table \ref{table} {in Appendix \ref{AppendixB}}.

We define the following functions:
\begin{align}
\label{Def-H0}
  H_0^B(x)&:=\frac{1}{16\kappa_B}\Big(\left(\Sigma_{B,B}^a+4|B|-|B|^2\right)a(x)+(4-|B|)\Sigma_{x,B}^a+(4-|B|)^2\Big),\\
  \label{Def-Hk}
  H_k^B(x)&:=\frac{2}{|B|}-\frac{\Sigma_{B,B}^a}{128\kappa_B^2|B|}\left(1-\frac{|B|}{2\kappa_B}\right)^{k-1}\Big(\Sigma_{B,B}^a+2|B|^2a(x)-2|B|\Sigma_{x,B}^a\Big),
  \end{align}
  for $k\ge 1$. For $x\in\Z^2$ and $l\in\Z_{\ge 0}$, define
  \begin{align}
  \label{def-HfB}
  H_f^B(x,l)&:=\sum_{k=0}^\infty H_k^B(x)f(k+l).
\end{align}

We now prove (\ref{thm1.2-eq1}) for $l=0$; the general case follows by applying the same result to $f(\cdot +l)$.

\begin{prop}
\label{3.6}
For $x\in \Z^2$, we have
  \begin{align}
  \label{prop3.6-main}
   \lim_{q\to 1-}G_0(q)\P_x\Big[f(N_T^B)\Big]=H_f^B(x,0).
 \end{align}
\end{prop}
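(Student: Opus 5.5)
We follow the same route as for Lemma 2.1 and the subsequent limit lemma in Section 2. The starting point is that $N_T^B$ is a nondecreasing counting process. As in the derivation of Lemma 2.1, independence of $T$ from $S$ gives, for $k\ge 1$,
\begin{align*}
\P_x(N_T^B\ge k)=\P_x(\varsigma_k\le T)=\frac{1}{q}\P_x[q^{\varsigma_k}].
\end{align*}
Hence
\begin{align*}
G_0(q)\P_x\Big[f(N_T^B)\Big]=G_0(q)\Big(1-\frac{1}{q}\P_x[q^{\varsigma_1}]\Big)f(0)+\frac{G_0(q)}{q}\sum_{k=1}^\infty f(k)\Big(\P_x[q^{\varsigma_k}]-\P_x[q^{\varsigma_{k+1}}]\Big).
\end{align*}
The generating functions $\P_x[q^{\varsigma_k}]$ are given by the matrix formula (\ref{lem3.2-maineq}). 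The plan is therefore to diagonalize $M_B(q)$ and extract the $G_0(q)$-order behaviour of each term.

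\textbf{Step 1: the $f(0)$ term.} Using (\ref{Lem3.2-eqA}) and (\ref{Lem3.2-eqB}), we have $\P_x[q^{\varsigma_1}]=\frac q4(V_x(q)+|B|U_x(q))$. We need
\begin{align*}
\lim_{q\to1-} G_0(q)\Big(1-\tfrac q4 (V_x(q)+|B|U_x(q))\Big).
\end{align*}
For this we write $U_x(q),V_x(q)$ via Lemma \ref{lem:UandV}, multiply by $G_0(q)$, and use $G_0(q)/\Delta_B(q)\to 1/\kappa_B$ (Lemma \ref{lem3.3}). The leading terms cancel because of (\ref{Vx+Ux}). What remains should be expressed through $G_0-G_y\to a(y)$, $(1-q)G_0\to0$ and (\ref{avi=1}), and should match $H_0^B(x)$. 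Some care is needed here: the cancellation is exactly of order $G_0(q)$, so one must expand $\Delta_B(q)$ to order $1$ rather than only its ratio to $G_0$. To do this, we rewrite $\Delta_B(q)\cdot\frac{q}{4}(V_x+|B|U_x)$ as a polynomial in the $G_y(q)$ and subtract it from $\Delta_B(q)$ directly.

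\textbf{Step 2: the sum over $k\ge1$.} We write $M_B(q)^{k-1}$ spectrally. Since $M_B(q)$ has equal diagonal entries, its eigenvectors are $(1,\pm\beta(q))^{\top}$, with eigenvalues
\begin{align*}
\lambda_\pm(q)=\tfrac q4V_0(q)\pm\tfrac q4\sqrt{U_0(q)\Sigma_{B,B}(q)/\Delta_B(q)}.
\end{align*}
By Lemma \ref{lem3.5}, these converge to $1$ and $\lambda_-^B$, with eigenvectors tending to $\bm e_\pm$, and $|\lambda_-^B|<1$ is to be checked from Table \ref{table}. Then
\begin{align*}
\P_x[q^{\varsigma_k}]-\P_x[q^{\varsigma_{k+1}}]=c_+(q)\lambda_+(q)^{k-1}\big(1-\lambda_+(q)\big)+c_-(q)\lambda_-(q)^{k-1}\big(1-\lambda_-(q)\big).
\end{align*}
The key asymptotics needed are:
\begin{itemize}
\item[(i)] $G_0(q)(1-\lambda_+(q))$ has a finite positive limit, which is what produces the constant $2/|B|$ in $H_k^B$;
\item[(ii)] $G_0(q)c_-(q)$ has a finite limit while $1-\lambda_-(q)\to |B|/(2\kappa_B)$, which produces the geometric correction term in $H_k^B(x)$ involving $\Sigma_{B,B}^a+2|B|^2a(x)-2|B|\Sigma_{x,B}^a$.
\end{itemize}
Item (ii) is the statement that the $\bm e_-$ component of the initial row vector $\frac q4(V_x,U_x)$ vanishes to order $1/G_0$; it should follow by the same cancellation technique as in Step 1. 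Since $\lambda_\pm(q)^{k-1}$ are bounded by $1$ (for $q$ near $1$) and $f$ is summable, dominated convergence lets us pass the limit inside the sum over $k$, giving $\sum_{k\ge1}H_k^B(x)f(k)$.

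\textbf{Main obstacle.} The main difficulty is the second-order expansion. We need the $1/G_0(q)$ corrections of $\lambda_+(q)$, of $c_-(q)$, and of $1-\P_x[q^{\varsigma_1}]$, and these involve the square root above. To expand $\lambda_+(q)$ to order $1/G_0$, we would first try to avoid the square root by using $\det(M_B(q)-\lambda)=0$. Since $\lambda_-(q)$ stays away from $1$, we have
\begin{align*}
1-\lambda_+(q)=\frac{\det(I-M_B(q))}{1-\lambda_-(q)}.
\end{align*}
The quantity $G_0(q)\det(I-M_B(q))$ is an explicit rational expression in the $G_y(q)$, whose limit is computed via $a$ as in Lemma \ref{lem3.3}. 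Similarly, $c_\pm(q)$ come from solving a $2\times2$ linear system whose only singular behaviour is absorbed by $\Delta_B(q)$. Finally, we would check the result against $H_f^B(x,0)$ as defined in (\ref{Def-H0})–(\ref{def-HfB}).
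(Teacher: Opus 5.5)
Your starting decomposition and Step 1 match the paper. Your spectral treatment of the $k\ge1$ terms also reaches the right limit, but it is heavier than necessary, and the reason you give for (ii) is wrong.

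\textbf{Comparison with the paper.} The paper never diagonalizes $M_B(q)$ for $q<1$. It writes $\P_x[q^{\varsigma_k}]-\P_x[q^{\varsigma_{k+1}}]=\frac q4\big(V_x(q),U_x(q)\big)M_B(q)^{k-1}\big(I-M_B(q)\big)\bm e_+$. It then notes, from (\ref{Lem3.2-eqC}) with $k=1$, that $(I-M_B(q))\bm e_+=\big(1-\P_0[q^{\varsigma_1}],\ \sum_{v\in\widetilde B}(1-\P_v[q^{\varsigma_1}])\big)^\top$. Step 1 applied at $x=0$ and $x\in\widetilde B$ therefore gives $G_0(q)(I-M_B(q))\bm e_+\to\frac{2}{|B|}\bm e_+-\frac{\Sigma^a_{B,B}}{8\kappa_B|B|}\bm e_-$. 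After that, only the zeroth-order limits of the row vector and of $M_B(q)$ are needed, and the eigen-decomposition is used only for the limit matrix $M_B$. Your second-order quantities (i) and (ii) encode exactly the $\bm e_+$- and $\bm e_-$-coordinates of this same limit vector; for example, $1-\lambda_+(q)=1-\P_0[q^{\varsigma_1}]+\frac q4U_0(q)\big(|B|-\beta(q)\big)$. So the square root and the $\det(I-M_B(q))$ expansion are unnecessary: Step 1 already contains all the second-order information.

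\textbf{The justification of (ii).} The $\bm e_-$-part of the row vector $\frac q4(V_x(q),U_x(q))$ does not vanish. Its pairing with $\bm e_-$ tends to $\frac14(V_x-|B|U_x)=\frac{1}{16\kappa_B}\big(\Sigma^a_{B,B}+2|B|^2a(x)-2|B|\Sigma^a_{x,B}\big)$. This is of order one (at $x=0$ it equals $\lambda_-^B\neq0$), and it is precisely the $x$-dependent factor in $H_k^B(x)$. So the statement as you wrote it is false, and trying to prove it would fail.

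The $1/G_0$ smallness of $c_-(q)$ comes instead from the column vector $\bm e_+$. Write the eigenvectors of $M_B(q)$ as $(1,\pm\beta(q))^\top$. By (\ref{DeltaU-G}), $\beta(q)^2=\Sigma_{B,B}(q)/G_0(q)$. The component of $\bm e_+$ along $(1,-\beta(q))^\top$ is $(\beta(q)-|B|)/(2\beta(q))$. Finally, $G_0(q)\big(\beta(q)^2-|B|^2\big)=\sum_{v,w\in\widetilde B}\big(G_{v-w}(q)-G_0(q)\big)\to-\Sigma^a_{B,B}$.

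\textbf{Dominated convergence.} The table does not give $|\lambda^B_-|<1$ when $|B|=4$; there $\lambda^{B}_-=-1$. A uniform bound valid for every $B$ follows from the strong Markov property at $\varsigma_k$ together with Step 1:
$0\le\P_x[q^{\varsigma_k}]-\P_x[q^{\varsigma_{k+1}}]\le\max_{y\in\{0\}\cup\widetilde B}\big(1-\P_y[q^{\varsigma_1}]\big)=O(1/G_0(q))$, uniformly in $k$.
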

\begin{proof}
 As in the derivation of (\ref{lem2.1-eq1}), we have
  \begin{align*}
 \P_x(N_T^B\ge k)&=\P_x(\varsigma_k\le T)=\frac{1}{q}\P_x[q^{\varsigma_k}]\qquad \text{for}\ k\ge 1.
  \stepcounter{equation}\tag{\theequation}     
 \end{align*}
 Thus, we have
   \begin{align}
   \label{prop3.6-eq1}
    \P_x[f(N_T^B)]=\left(1-\frac{1}{q}\P_x[q^{\varsigma_1}]\right)f(0)+\frac{1}{q}\sum_{k\ge 1}\Big(\P_x[q^{\varsigma_{k}}]-\P_x[q^{\varsigma_{k+1}}]\Big)f(k).
  \end{align}
By (\ref{lem3.2-maineq}), (\ref{Uxq-def}), (\ref{Vxq-def}), (\ref{DeltaBq-def}), (\ref{lem3.3-eq1}), (\ref{a-definition}), (\ref{G_0(q)sim}), and (\ref{avi=1}), we have
\begin{align*}
\label{prop3.6-eq2}
&G_0(q)\left(1-\frac{1}{q}\P_x[q^{\varsigma_1}]\right)\\
&\qquad =\frac{G_0(q)}{4}\Big(4-V_x(q)-|B|U_x(q)\Big)\\
&\qquad =\frac{G_0(q)}{4\Delta_B(q)}\Bigg(4-2|B|(1-q)G_{v_1}(q)\\
&\qquad \qquad +\frac{q(1-q)}{4}\sum_{v,w\in \widetilde{B}}\Big(G_0(q)\Big(G_{v-w}(q)-G_{v_1}(q)\Big)+G_{v_1}(q)\Big(G_0(q)-G_{v_1}(q)\Big)\Big)\\
&\qquad \qquad +|B|\Big(G_{v_1}(q)-G_x(q)\Big)+\sum_{v\in \widetilde{B}}\Big(G_{v_1}(q)-G_{x-v}(q)\Big)\\
&\qquad \qquad + \frac{q}{4}\sum_{v,w\in \widetilde{B}}\Big\{\Big(G_{v_1}(q)-G_x(q)\Big)\Big(G_{v_1}(q)-G_{x-v}(q)\Big)\\
&\qquad \qquad \qquad \qquad \qquad \qquad \left.+\Big(G_{v-w}(q)-G_{x-v}(q)\Big)\Big(G_{x}(q)-G_{0}(q)\Big)\right\}\Bigg)\\
&\qquad \to \frac{1}{4\kappa_B}\left( 4+|B|\Big(a(x)-a(v_1)\Big)+\sum_{v\in \widetilde{B}}\Big(a(x-v)-a(v_1)\Big)\right.\\
&\qquad \qquad + \frac{1}{4}\sum_{v,w\in \widetilde{B}}\Big\{\Big(a(x)-a(v_1)\Big)\Big(a(x-v)-a(v_1)\Big)\\
&\qquad \qquad \qquad \qquad \qquad \qquad \left.+\Big(a(x-v)-a(v-w)\Big)\Big(-a(x)\Big)\right\}\Bigg)\\
&\qquad =H_0^B(x). 
 \stepcounter{equation}\tag{\theequation} 
\end{align*}

Next, we consider the second term in (\ref{prop3.6-eq1}). By (\ref{G_0(q)sim}) and (\ref{prop3.6-eq2}), we have
\begin{align}
  G_0(q)\Big(1-\P_x[q^{\varsigma_1}]\Big)=(1-q)G_0(q)+qG_0(q)\left(1-\frac{1}{q}\P_x[q^{\varsigma_1}]\right)\to H_0^B(x)
\end{align}
as $q\to 1-.$ Thus, by (\ref{Lem3.2-eqC}) {and (\ref{lem3.3-eq1})}, we have
\begin{align*}
G_0(q)\left\{\begin{pmatrix}
     1\\[10pt]
     |B|
   \end{pmatrix}-M_B(q)\begin{pmatrix}
     1\\[10pt]
     |B|
   \end{pmatrix}\right\}&=G_0(q)\begin{pmatrix}
     \displaystyle 1-\P_0[q^{\varsigma_1}]\\[10pt]
    \displaystyle \sum_{v\in \widetilde{B}}\Big(1-\P_v[q^{\varsigma_1}]\Big)
   \end{pmatrix}\\
   &\to \begin{pmatrix}
     H_0^B(0)\\[10pt]
     \displaystyle \sum_{v\in \widetilde{B}}H_0^B(v)
   \end{pmatrix}=\frac{2}{|B|}\bm{e}_+-\frac{\Sigma_{B,B}^a}{8\kappa_B|B|}\bm{e}_-.
  \stepcounter{equation}\tag{\theequation}   
  \end{align*}
For each fixed $k\ge1$, (\ref{lem3.2-maineq}), (\ref{Vx+Ux}), and Lemmas \ref{lem3.4} and \ref{lem3.5} yield
\begin{align*}
\label{prop3.6-eq3}
&G_0(q)\Big(\P_x[q^{\varsigma_{k}}]-\P_x[q^{\varsigma_{k+1}}]\Big)\\
&\qquad =G_0(q)\begin{pmatrix}
  \displaystyle \frac{q}{4}V_x(q) & \displaystyle\frac{q}{4}U_x(q)
\end{pmatrix}M_B(q)^{k-1}\left\{\begin{pmatrix}
     1\\[10pt]
     |B|
   \end{pmatrix}-M_B(q)\begin{pmatrix}
     1\\[10pt]
     |B|
   \end{pmatrix}\right\}\\
   &\qquad \to \begin{pmatrix}
 \displaystyle\frac{1}{4}V_x & \displaystyle\frac{1}{4}U_x
\end{pmatrix}(M_B)^{k-1}\left\{\frac{2}{|B|}\bm{e}_+-\frac{\Sigma_{B,B}^a}{8\kappa_B|B|}\bm{e}_-\right\}\\
&\qquad = \begin{pmatrix}
 \displaystyle\frac{1}{4}V_x & \displaystyle\frac{1}{4}U_x
\end{pmatrix}\left\{\frac{2}{|B|}\bm{e}_+-\frac{\Sigma_{B,B}^a}{8\kappa_B|B|}\left(1-\frac{|B|}{2\kappa_B}\right)^{k-1}\bm{e}_-\right\}\\
&\qquad =H_k^B(x).
  \stepcounter{equation}\tag{\theequation}   
  \end{align*}
  Therefore, since $f$ is summable, the assertion follows by applying the dominated convergence theorem to (\ref{prop3.6-eq1}) and using (\ref{prop3.6-eq2}) and (\ref{prop3.6-eq3}). This completes the proof.
\end{proof}

The martingale property and the penalization limit follow by the same arguments as in Lemma \ref{h-martingale-lem} and Proposition \ref{thm2.5}, respectively. We therefore omit the proof.

\begin{prop}
\label{thm3.7}
For every $x\in \Z^2$, the process 
\begin{align}
  \Big(H_f^B(S_n,N_n^B)\Big)_{n\ge 0}
\end{align}
is a nonnegative $\P_x$-martingale. Moreover, if $H_f^B(x,0)>0$, we have
 \begin{align}
 \label{thm3.7-eq1}
   \lim_{q\to 1-}\frac{\P_x[1_{\Lambda_n}\cdot f(N_{T}^{B})]}{\P_x[f(N_{T}^{B})]}=\P_x\left[1_{\Lambda_n}\cdot \frac{H_f^B(S_n,N_n^B)}{H_f^B(x,0)}\right]
 \end{align}
for $n\ge 0$ and $\Lambda_n\in \F_n$.
\end{prop}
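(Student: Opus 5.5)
We follow the scheme of Lemma \ref{h-martingale-lem} and Proposition \ref{thm2.5}. The chain here is $(S,N^B)$, and its transitions are simple. From $(x,l)$ the walk moves to $x\pm$ a unit vector, and the count rises by one exactly when the edge $\{x,S_1\}$ lies in $B$. We will therefore use Proposition \ref{3.6} in the shifted form
\begin{align*}
\lim_{q\to 1-}G_0(q)\P_x\Big[f(N_T^B+l)\Big]=H_f^B(x,l)\qquad \text{for}\ x\in\Z^2,\ l\in\Z_{\ge0}.
\end{align*}
This is obtained by applying Proposition \ref{3.6} to $f(\cdot+l)$, which again satisfies (\ref{assumption-f}) or vanishes identically. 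If it vanishes identically, both sides are $0$. Note that $H_f^B\ge 0$, being a limit of nonnegative quantities.

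\textbf{Harmonicity.} First decompose according to whether $T=1$ or $T\ge 2$. On $\{T\ge 2\}$, the variable $T-1$ is again geometric with the same parameter and is independent. The Markov property at time $1$ then gives
\begin{align*}
G_0(q)\P_x\Big[f(N_T^B+l)\Big]=pG_0(q)\P_x\Big[f(N_1^B+l)\Big]+q\,\P_x\Big[G_0(q)\P_{S_1}\Big[f(N_T^B+l+l')\Big]_{l'=N_1^B}\Big].
\end{align*}
The first term is bounded by $(1-q)G_0(q)\max_{k\le l+1} f(k)$, which tends to $0$ by (\ref{G_0(q)sim}). The second is a finite sum over the four possible values of $(S_1,N_1^B)$, so we may pass to the limit termwise using the shifted Proposition \ref{3.6}. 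This gives
\begin{align*}
\P_x\Big[H_f^B(S_1,N_1^B+l)\Big]=H_f^B(x,l).
\end{align*}
Combined with the Markov property of $(S,N^B)$, this makes $(H_f^B(S_n,N_n^B))_{n\ge0}$ a nonnegative martingale. Integrability is automatic, since $(S_n,N_n^B)$ takes finitely many values for each $n$.

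\textbf{Penalization limit.} Fix $n$ and $\Lambda_n\in\F_n$, and split $\P_x[1_{\Lambda_n}f(N_T^B)]$ into the parts $T\le n$ and $T>n$. After multiplication by $G_0(q)$, the part $T\le n$ is at most $n\max_{m\le n}f(m)\,(1-q)G_0(q)\to0$. For the part $T>n$, use the Markov property at time $n$ together with the memorylessness of $T$. It equals
\begin{align*}
q^n\P_x\Big[1_{\Lambda_n}\,G_0(q)\P_{S_n}\Big[f(N_T^B+l)\Big]_{l=N_n^B}\Big],
\end{align*}
and this converges to $\P_x[1_{\Lambda_n}H_f^B(S_n,N_n^B)]$, again because only finitely many values of $(S_n,N_n^B)$ occur. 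Proposition \ref{3.6} with $l=0$ gives the limit of the denominator, namely $H_f^B(x,0)$. The hypothesis $H_f^B(x,0)>0$ lets us divide, which yields (\ref{thm3.7-eq1}).

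\textbf{Main care.} There is no genuine obstacle. The one point requiring care is justifying the exchange of limit and expectation, and this is settled by the finiteness of the range of $(S_n,N_n^B)$ for fixed $n$.
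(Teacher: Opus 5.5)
Your proposal is correct and takes the same route as the paper, which omits the proof on the grounds that the martingale property and the penalization limit follow by repeating the arguments of Lemma \ref{h-martingale-lem} and Proposition \ref{thm2.5}. You carry out exactly those arguments for the chain $(S,N^B)$, and you also make explicit two points the paper leaves implicit: $f(\cdot+l)$ may vanish identically, and $H_f^B\ge 0$ because it is a limit of nonnegative quantities.
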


Finally, we give necessary and sufficient conditions for $H_f^B$ to be strictly positive.

\begin{prop}
\label{lem3.8}
Let $l\in \Z_{\ge 0}$ {and $f : \Z_{\ge 0} \to [0, \infty)$}. If $|B|\le 3$, then we have
 \begin{align}
   H_f^B(x,l)>0\qquad \Longleftrightarrow\qquad \sum_{n=l}^\infty f(n)>0.
 \end{align}
If $|B|=4$, then we have
\begin{align}
  H_f^B(0,l)>0\qquad &\Longleftrightarrow\qquad {}^\exists n\ge l+1\ \text{s.t.}\ n-l\text{ is odd},\ f(n)>0,\\
   H_f^B(x,l)>0\qquad &\Longleftrightarrow\qquad {}^\exists n\ge l\ \text{s.t.}\ n-l\text{ is even},\ f(n)>0,
\end{align}
for $x\neq 0$.
\end{prop}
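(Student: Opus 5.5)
The plan is to read off positivity directly from the explicit coefficients $H_k^B(x)$ in (\ref{def-HfB}). Since $f\ge 0$, we have $H_f^B(x,l)=\sum_{k\ge0}H_k^B(x)f(k+l)$ as a sum of terms whose signs are determined by $H_k^B(x)$. Nonnegativity of every $H_k^B(x)$ is automatic: they arise as limits of the nonnegative quantities $G_0(q)(1-q^{-1}\P_x[q^{\varsigma_1}])$ and $G_0(q)(\P_x[q^{\varsigma_k}]-\P_x[q^{\varsigma_{k+1}}])$ in the proof of Proposition~\ref{3.6}; the latter is nonnegative because $\varsigma_k<\varsigma_{k+1}$. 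Hence
\[
H_f^B(x,l)>0 \iff \text{there is } k \text{ with } H_k^B(x)>0 \text{ and } f(k+l)>0.
\]
The whole statement thus reduces to identifying the set $\{k: H_k^B(x)>0\}$ for each $x$ and $B$.

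\textbf{Case $|B|\le 3$.} I would show $H_k^B(x)>0$ for all $k$ and $x$.
\begin{itemize}
\item For $k=0$: in (\ref{Def-H0}) we have $(4-|B|)^2>0$. The other terms are nonnegative once one checks $\Sigma_{B,B}^a+4|B|-|B|^2\ge 0$; this holds since $\Sigma_{B,B}^a\ge 0$ and $|B|\le 4$.
\item For $k\ge1$: set $c(x):=\Sigma_{B,B}^a+2|B|^2a(x)-2|B|\Sigma_{x,B}^a$. Using the table values of $\kappa_B$ and $\lambda_-^B$ from Appendix~\ref{AppendixB}, I expect $|\lambda_-^B|<1$ when $|B|\le3$.
\item Then the second term of (\ref{Def-Hk}) is bounded by its $k=1$ value, up to sign when $\lambda_-^B<0$. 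It therefore suffices to check strict positivity of $\frac{2}{|B|}\pm\frac{\Sigma_{B,B}^a}{128\kappa_B^2|B|}\,|\lambda_-^B|^{k-1}c(x)$.
\end{itemize}

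The cleanest route for $k\ge 1$ is probabilistic. Note that $c(x)$ involves $a(x)-a(x-v)$, and by the triangle-type bound $|a(x)-a(x-v)|\le a(v)=1$ from (\ref{avi=1}), $c(x)$ is bounded uniformly in $x$. The plan is to verify $H_1^B(x)>0$, and then monotonicity or alternation bounds in $k$, by a finite computation over the four configurations of $B$ up to symmetry (one edge, two adjacent, two opposite, three edges).

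An alternative that avoids table-checking: interpret $H_k^B(x)$ as $\lim G_0(q)\P_x(N_T^B=k)$. Positivity then follows by constructing paths. From $x$, cross exactly $k$ $B$-edges, then wander using an edge not in $B$, which is possible since $|B|\le3$. This yields a lower bound like $c\cdot\lim G_0(q)\P_0(N_T^B=0)>0$ via the Markov property, as in Lemma~\ref{lem2.3}. I would try this second route first, since the non-$B$ edge lets the walk return to $0$ without crossings, so $\P_0(N_T^B=0)\gtrsim 1/G_0(q)$ by Lemma~\ref{lem3.3}.

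\textbf{Case $|B|=4$.} Here every visit to $0$ entails crossings: each excursion from $0$ contributes exactly $2$, and a start at $x\ne0$ contributes $1$ upon first arrival. So $N_n^B$ at times when $S_n\ne0$ has parity fixed by the start. From $x\ne0$, $N^B_\infty$ is even as long as the walk is eventually away from $0$, which is what survives in the limit. From $0$, the final value is odd, since the walk must leave $0$ at the end. This gives $H_k^B(x)=0$ for the wrong parity. This matches (\ref{Def-Hk}) with $\lambda_-^B=-1$, i.e.\ $\kappa_B=1$ and $|B|/(2\kappa_B)=2$, where the alternating term cancels $2/|B|$ exactly on one parity.

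For the correct parity, I would compute directly from (\ref{Def-H0}) and (\ref{Def-Hk}) using $\kappa_B=1$ and the table value of $\Sigma_{B,B}^a$, and show the coefficient is $1$ (i.e.\ $2\cdot 2/|B|$) for $k\ge1$. For $k=0$, $x\ne0$, the coefficient is $H_0^B(x)\ge c\,a(x)>0$, while $H_0^B(0)=0$. Translating back gives the parity conditions on $n=k+l$.

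\textbf{Main obstacle.} I expect the main obstacle to be the positivity of $H_k^B(x)$ for all $k\ge1$ uniformly in $x$ when $|B|\le3$. Here the explicit formula mixes signs, and the probabilistic lower-bound argument must be made rigorous in the $q\to1-$ limit.
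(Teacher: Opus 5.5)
Your reduction to the sign pattern of the coefficients is the paper's: each $H_k^B(x)$ is nonnegative, being $\lim_{q\to1-}G_0(q)\P_x(N_T^B=k)$. Your treatment of $k=0$ and your plan for $|B|=4$ also follow the paper. The real divergence is at $k\ge1$ with $|B|\le3$, and there only your second route works as written.

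In your first route, the triangle bound $|a(x)-a(x-v)|\le a(v)=1$ only gives $c(x)\le\Sigma^a_{B,B}+2|B|^2$. For three edges this is $26$, whereas $H_1^B(x)=\frac23-\frac{4}{147}c(x)$ is positive only if $c(x)<24.5$. So the finite check fails for $|B|=3$. The paper's sharper input is the identity $c(x)=4\kappa_B(V_x-|B|U_x)$, combined with $U_x,V_x\ge0$ and $V_x+|B|U_x=4$, which gives $|c(x)|\le16\kappa_B$. Probabilistically, for $x\neq0$ one has $\sum_{v\in\widetilde{B}}(a(x)-a(x-v))=4\P_x(S_{\tau-1}\in\widetilde{B})-|B|\le4-|B|$, rather than $\le|B|$ termwise. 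Together with $|\lambda^B_-|\le1$ from the table, this yields the uniform bound $H_k^B(x)\ge(4-|B|)/(4\kappa_B)$.

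Your second route is correct and genuinely different, and the rigor you worry about takes one line. By Proposition~\ref{thm3.7} applied to $f=f^{(k)}$, the process $Z^{[k],B}_n=H^B_{k-N^B_n}(S_n)1_{\{N^B_n\le k\}}$ is a nonnegative $\P_x$-martingale. Hence, for any lattice path $\gamma$ of length $m$ from $x$ that crosses $B$-edges exactly $k$ times, $H_k^B(x)=\P_x[Z^{[k],B}_m]\ge4^{-m}H_0^B(\gamma_m)>0$. Equivalently, use the Markov property at time $m$, the memorylessness of $T$, and Proposition~\ref{3.6} (not Lemma~\ref{lem3.3}) for $G_0(q)\P_y(N_T^B=0)\to H_0^B(y)$. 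Since $H_0^B>0$ everywhere when $|B|\le3$, you need neither a return to $0$ nor a non-$B$ edge, and the table is never used. What the paper's computation buys instead is uniform control: the bound $|c(x)|\le16\kappa_B$ is reused in Section~\ref{S4} to get $0\le H_k^B\le4/|B|$.

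For $|B|=4$, the direct computation also needs $\Sigma^a_{x,B_\ast}=4a(x)$ for $x\neq0$ and $\Sigma^a_{0,B_\ast}=4$, i.e.\ harmonicity of $a$ off the origin, which you did not list. Alternatively, your parity heuristic can be made rigorous. $N_T^{B_\ast}$ has the wrong parity exactly when $S_T=0$, and $G_0(q)\P_x(S_T=0)\le\frac{1-q}{q}G_0(q)^2\to0$. Right-parity positivity then follows from the same path argument, ending at some $v_i$, where $H_0^{B_\ast}(v_i)=a(v_i)=1$.
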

\begin{proof}
We first determine where $H_0^B$ is strictly positive. If $|B|<4$, then the definition of $H_0^B$ gives $H_0^B(x)>0$ for every $x\in\Z^2$. If $|B|=4$, that is, $B=B_\ast:=\{e_1,e_2,e_3,e_4\}$, then (\ref{a11}) and (\ref{lemA.1-eq2}) imply
\begin{align}
\label{lem3.8-eq1}
   \kappa_{B_\ast}=1,\qquad \Sigma_{B_\ast,B_\ast}^a=16.
 \end{align}
 Thus, we have
  \begin{align}
  \label{lem3.8-eq2}
   H_0^{B_\ast}(x)=\frac{1}{16\kappa_{B_\ast}}\Sigma_{B_\ast,B_\ast}^aa(x)=a(x).
 \end{align}
Therefore, we obtain $H_0^{{B_\ast}}(0)=0$, whereas $H_0^{{B_\ast}}(x)>0$ for $x\neq 0$.

Next, we consider $H_k^B$ for $k\ge 1.$ By direct calculation and (\ref{Vx+Ux}), we have
 \begin{align*}
 \label{lem3.8-eqA}
\left|\frac{1}{16\kappa_B}\Big(\Sigma_{B,B}^a+2|B|^2a(x)-2|B|\Sigma_{x,B}^a\Big)\right|&= \frac{1}{4}\Big|V_x-|B|U_x\Big|\le\frac{1}{4}\Big(V_x+|B|U_x\Big)=1.
  \stepcounter{equation}\tag{\theequation} 
 \end{align*}
 Thus, we have
\begin{align}
   H_k^B(x)\ge \frac{2}{|B|}-\frac{\Sigma_{B,B}^a}{8\kappa_B|B|}\cdot |\lambda^B_-|^{k-1}\qquad \text{for}\ x\in \Z^2.
 \end{align}
 If $|B|<4$, Table \ref{table} {in Appendix \ref{AppendixB}} gives $|\lambda^B_-|\le 1$. Therefore, we have
 \begin{align*}
   H_k^B(x)\ge \frac{2}{|B|}-\frac{\Sigma_{B,B}^a}{8\kappa_B|B|}=\frac{1}{|B|}\cdot \frac{|B|(4-|B|)}{4\kappa_B}>0\qquad \text{for}\ x\in \Z^2.
     \stepcounter{equation}\tag{\theequation}
 \end{align*}
 If $B=B_\ast$, Table \ref{table} {in Appendix \ref{AppendixB}} gives $\lambda^B_-=- 1$. By Proposition 4.4.2 of \cite{Lawler-Limic}, we have
  \begin{align}
   \Sigma_{x,B_\ast}^a=\begin{cases}
     4a(x)&\text{for}\ x\neq 0,\\[10pt]
     4&\text{for}\ x=0.
   \end{cases}
 \end{align}
 Therefore, by (\ref{lem3.8-eq1}), we obtain
 \begin{align}
 \label{lem3.8-eq3}
   H_k^{{B_\ast}}(0)&=\begin{cases}
     1&\text{if}\ k\ \text{is\ odd},\\
     0&\text{if}\ k\ \text{is\ even},
   \end{cases}\\
    \label{lem3.8-eq4}
   H_k^{B_\ast}(x)&=\begin{cases}
     0&\text{if}\ k\ \text{is\ odd},\\
     1&\text{if}\ k\ \text{is\ even {and $k \neq 0$}},
   \end{cases}\qquad \text{for}\ x\neq 0.
 \end{align}
The assertions now follow from (\ref{def-HfB}) and the positivity properties of $H_k^B$ established above. This completes the proof.
\end{proof}

This completes the proof of all the assertions in Theorem \ref{thm1.2}.

\subsection{Penalized Process}
\label{S3.2}
In this subsection, we derive path decomposition formulas for the penalized process. Most of the arguments parallel those in Subsection \ref{S2.2}, so we omit proofs that require only minor modifications.

For each $x\in\Z^2$ such that $H_f^B(x,0)>0$, let $\Q_x^{f,B\leadsto}$ denote the penalized measure on $\F_\infty$ arising in the limit (\ref{thm3.7-eq1}). As in the construction of (\ref{def-QfA}), this measure is defined by
\begin{align}
  \frac{d\Q_x^{f,B\leadsto}}{d\P_x}\Big|_{\F_n}:=\frac{H_f^B(S_n,N_n^B)}{H_f^B(x,0)}.
\end{align}
Under $\Q_x^{f,B\leadsto}$, the process $(S,N^B)$ is the $H_f^B$-transform of the original Markov chain $(S,N^B)$ and is therefore a strong Markov chain. In general, the coordinate process $S$ alone need not be Markovian. When $f$ is exponential, however, for example $f(k)=r^k$ with $0<r<1$, the process $S$ alone is Markovian.

Recall that $f^{(k)}(l):=1_{\{l=k\}}$. For each $k\ge 0$, define
\begin{align}
  Z_n^{[k],B}:=H_{f^{(k)}}^B(S_n,N_n^B)=H_{k-N_n^B}^B(S_n)1_{\{N_n^B\le k\}}.
\end{align}
The expression on the right-hand side is understood to be zero when $N_n^B>k$. By Proposition \ref{thm3.7}, the process $(Z_n^{[k],B})_{n\ge 0}$ is a nonnegative $\P_x$-martingale. In particular, we have
\begin{align}
  \P_x[Z_n^{[k],B}]=\P_x[Z_0^{[k],B}]=H_k^B(x).
\end{align}
The proof of Proposition \ref{lem3.8} shows that, when $|B|<4$, we have $H_k^B(x)>0$ for every $k\ge 0$ and $x\in\Z^2$. When $B=B_\ast$, however, $H_k^{B_\ast}(x)$ may vanish. For each $k\ge0$ and $x\in\Z^2$ such that $H_k^B(x)>0$, we can therefore define a probability measure $\Q_x^{[k],B\leadsto}$ on $\F_\infty$ by
\begin{align}
  \frac{d\Q_x^{[k],B\leadsto}}{d\P_x}\Big|_{\F_n}=\frac{Z_n^{[k],B}}{H_k^B(x)}.
\end{align}
In particular, we set $\P_x^{B\Updownarrow}:=\Q_x^{[0],B\leadsto}$, that is,
\begin{align}
\label{def-PxBupd}
   \frac{d\P_x^{B\Updownarrow}}{d\P_x}\Big|_{\F_n}:=\frac{H_0^B(S_n)}{H_0^B(x)}1_{\{N_n^B=0\}}\qquad \text{for}\ x\in \Z^2\ \text{with}\ H_0^B(x)>0.
\end{align}
The measures $\Q_x^{[k],B\leadsto}$ and $\P_x^{B\Updownarrow}$ do not depend on $f$. Under $\P_x^{B\Updownarrow}$, the coordinate process $S$ is the $H_0^B$-transform of the simple random walk killed at its first crossing of a $B$-edge. In particular, it is a strong Markov chain.

Arguing as in Lemmas \ref{lem-2.6} and \ref{lem2.7} and Corollary \ref{cor-2.8}, we obtain the following mixture representation and identify the distribution of the total number of $B$-edge crossings.

\begin{prop}
Assume that $|B|<4$. For $x\in \Z^2,$ we have
\begin{align}
\label{prop3.9-main}
  \Q_x^{f,B\leadsto}=\sum_{k=0}^\infty \frac{H_k^B(x)f(k)}{H_f^B(x,0)}\Q_x^{[k],B\leadsto}.
\end{align}
Let $N_\infty^B:=\lim_{n\to \infty}N_n^{B}$. Then
\begin{align}
\label{Prop3.9-eq1}
 \Q_x^{f,B\leadsto}(N_\infty^B=k)=\frac{H_k^B(x)f(k)}{H_f^B(x,0)}\qquad \text{for}\ k\ge 0.
\end{align}
In particular,
\begin{align}
  \Q_x^{f,B\leadsto}(N_\infty^B<\infty)=1.
\end{align}
\end{prop}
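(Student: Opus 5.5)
The plan is to mirror Lemmas \ref{lem-2.6} and \ref{lem2.7} and Corollary \ref{cor-2.8}, using the martingales $Z^{[k],B}$ in place of $Z^{(k),A}$. Since $|B|<4$, the proof of Proposition \ref{lem3.8} gives $H_k^B(x)>0$ for every $k\ge0$ and $x\in\Z^2$. Hence every $\Q_x^{[k],B\leadsto}$ is a well-defined probability measure, and $H_f^B(x,0)\ge H_k^B(x)f(k)$ is positive because of (\ref{assumption-f}). I would first prove the mixture identity (\ref{prop3.9-main}) on each $\F_n$. By (\ref{def-HfB}) and the definition of $Z^{[k],B}$,
\begin{align*}
H_f^B(S_n,N_n^B)=\sum_{k=0}^\infty H_{k-N_n^B}^B(S_n)1_{\{N_n^B\le k\}}f(k)=\sum_{k=0}^\infty f(k)Z_n^{[k],B}.
\end{align*}
Dividing by $H_f^B(x,0)$ and writing $Z_n^{[k],B}=H_k^B(x)\cdot d\Q_x^{[k],B\leadsto}/d\P_x|_{\F_n}$ gives the identity for events in $\F_n$. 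Exchanging sum and expectation is justified because all terms are nonnegative. The weights sum to $1$ since $\sum_k H_k^B(x)f(k)=H_f^B(x,0)$. Both sides of (\ref{prop3.9-main}) are probability measures that agree on $\bigcup_n\F_n$, so they agree on $\F_\infty$ by the $\pi$-$\lambda$ theorem.

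Next I would show $\Q_x^{[k],B\leadsto}(N_\infty^B=k)=1$ for every $k$. The upper bound is immediate: $Z_n^{[k],B}$ vanishes on $\{N_n^B\ge k+1\}$, so $\Q_x^{[k],B\leadsto}(N_n^B\ge k+1)=0$ for all $n$. For the lower bound with $k\ge1$, I would use that under $\P_x$ the walk is recurrent and visits the origin infinitely often. Each excursion crosses a $B$-edge with probability $\kappa_B>0$ (positive by Lemma \ref{lem3.3}, or simply because $B$ is nonempty), so $N_\infty^B=\infty$ $\P_x$-a.s. Then
\begin{align*}
\Q_x^{[k],B\leadsto}(N_n^B<k)=\frac{1}{H_k^B(x)}\P_x\Big[H_{k-N_n^B}^B(S_n)1_{\{N_n^B<k\}}\Big].
\end{align*}

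The main obstacle is that, unlike $Z^{(k),A}$, the factor $H_{k-N_n^B}^B(S_n)$ is not identically $1$ on $\{N_n^B<k\}$, and $H_j^B(y)$ with $j\ge1$ depends on $y$. I would bound it uniformly. From (\ref{Def-Hk}) together with the estimate (\ref{lem3.8-eqA}) and $|\lambda_-^B|\le1$, we get $0\le H_j^B(y)\le \frac{2}{|B|}+\frac{\Sigma_{B,B}^a}{8\kappa_B|B|}$ for all $j\ge1$ and $y$. This constant does not depend on $y$, so the display above is at most a constant times $\P_x(N_n^B<k)$, which tends to $0$. This yields $\Q_x^{[k],B\leadsto}(N_\infty^B<k)=0$. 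For $k=0$ only the upper bound is needed.

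Finally, combining the mixture with these almost sure statements gives (\ref{Prop3.9-eq1}): the measures $\Q_x^{[k],B\leadsto}$ are carried by the disjoint events $\{N_\infty^B=k\}$. Summing (\ref{Prop3.9-eq1}) over $k$ gives total mass $1$, which proves $\Q_x^{f,B\leadsto}(N_\infty^B<\infty)=1$.
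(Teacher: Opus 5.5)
Your proposal is correct and follows the route the paper indicates. You adapt Lemmas \ref{lem-2.6} and \ref{lem2.7} and Corollary \ref{cor-2.8} via the decomposition $H_f^B(S_n,N_n^B)=\sum_{k} f(k)Z_n^{[k],B}$. You also handle correctly the one step that does not carry over verbatim: $Z_n^{[k],B}$ is not identically $1$ on $\{N_n^B<k\}$, and you control it with the uniform bound on $H_j^B$ for $j\ge1$ obtained from (\ref{lem3.8-eqA}) and $|\lambda_-^B|\le 1$, which is the same bound the paper later records as (\ref{thm4.2-eq2}).
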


Suppose that $|B|<4$. Fix $k\ge1$ and $y\in\{0\}\cup\widetilde{B}$ such that the event $\{N_\infty^B=k,\ S_{\varsigma_k}=y\}$ has positive $\Q_x^{f,B\leadsto}$-probability. Conditional on this event, the path up to $\varsigma_k$ has the same law as the original simple random walk started at $x$, conditioned on $S_{\varsigma_k}=y$, and stopped at $\varsigma_k$. Under this conditional law, the post-$\varsigma_k$ process is independent of the stopped path and has law $\P_y^{B\Updownarrow}$. If the event $\{N_\infty^B=0\}$ has positive probability, then, conditional on this event, the entire path has law $\P_x^{B\Updownarrow}$. More precisely, we have the following proposition. Its proof is a straightforward adaptation of the proof of Proposition \ref{thm2.9} and is omitted.

\begin{prop}
\label{thm3.10}
Assume that $|B|<4$. For $x\in \Z^2$, $k\ge 1$, and $y\in \{0\}\cup \widetilde{B}$, we have
 \begin{align}
   \Q_x^{f,B\leadsto}\Big[F\cdot G\circ \theta_{\varsigma_k},\ N_\infty^B=k,\ S_{\varsigma_k}=y\Big]=\frac{H_0^B(y)f(k)}{H_f^B(x,0)}\cdot \P_x\Big[F,\ S_{\varsigma_k}=y\Big]\cdot \P_y^{B\Updownarrow}[G].
 \end{align}
We also have 
 \begin{align}
   \Q_x^{f,B\leadsto}\Big[G,\ N_\infty^B=0\Big]=\frac{H_0^B(x)f(0)}{H_f^B(x,0)}\P_x^{B\Updownarrow}[G],
 \end{align}
 where $F$ denotes a bounded $\F_{\varsigma_k}$-measurable random variable and $G$ denotes a bounded $\F_\infty$-measurable random variable.
\end{prop}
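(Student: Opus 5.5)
We follow the proof of Proposition \ref{thm2.9}, using the mixture representation (\ref{prop3.9-main}) in place of Lemma \ref{lem-2.6}. The first step is the analogue of Corollary \ref{cor-2.8}: under $\Q_x^{[k],B\leadsto}$ we have $N_\infty^B=k$ almost surely. This holds for the same reason as in Lemma \ref{lem2.7}. Under $\P_x$, recurrence gives $N_\infty^B=\infty$ a.s., and $Z^{[k],B}$ vanishes on $\{N_n^B>k\}$. For $k\ge 1$ one shows $\Q_x^{[k],B\leadsto}(N_n^B<k)\to 0$. To do so, write this probability as $\P_x[H_{k-N_n^B}^B(S_n)1_{\{N_n^B<k\}}]/H_k^B(x)$. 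The functions $H_j^B$ are bounded above by $\frac{2}{|B|}+\frac{\Sigma_{B,B}^a}{8\kappa_B|B|}$, using (\ref{lem3.8-eqA}) and $|\lambda_-^B|\le1$. Hence the probability is at most a constant times $\P_x(N_n^B<k)$, which tends to $0$. Combining this with (\ref{prop3.9-main}) gives
\begin{align}
\Q_x^{f,B\leadsto}\Big[\,\cdot\,,\ N_\infty^B=k\Big]=\frac{H_k^B(x)f(k)}{H_f^B(x,0)}\Q_x^{[k],B\leadsto}[\,\cdot\,].
\end{align}

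The second assertion ($k=0$) is then immediate, because $\Q_x^{[0],B\leadsto}=\P_x^{B\Updownarrow}$ by definition (\ref{def-PxBupd}).

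For $k\ge1$, fix $m\ge0$ and a bounded $\F_m$-measurable $G_m$. We decompose according to $\{\varsigma_k=n\}$ and use the density $Z^{[k],B}_{n+m}/H_k^B(x)$ at time $n+m$; this is legitimate because $F1_{\{\varsigma_k=n\}}$ and $G_m\circ\theta_n$ are $\F_{n+m}$-measurable. On $\{\varsigma_k=n,\ S_{\varsigma_k}=y\}$ we have $N_n^B=k$, so
\begin{align}
Z_{n+m}^{[k],B}=\Big(H_0^B(S_m)1_{\{N_m^B=0\}}\Big)\circ\theta_n .
\end{align}
The Markov property at time $n$ gives the factor $\P_y\big[G_m H_0^B(S_m)1_{\{N_m^B=0\}}\big]=H_0^B(y)\P_y^{B\Updownarrow}[G_m]$. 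Summing over $n$ yields $\P_x[F,\ S_{\varsigma_k}=y]\,H_0^B(y)\P_y^{B\Updownarrow}[G_m]/H_k^B(x)$. Multiplying by the weight $H_k^B(x)f(k)/H_f^B(x,0)$ from the first step, the factor $H_k^B(x)$ cancels and we obtain exactly the stated formula. Here $H_0^B(y)>0$ since $|B|<4$, so $\P_y^{B\Updownarrow}$ is well defined. A monotone class argument then extends the identity from $G_m$ to general bounded $\F_\infty$-measurable $G$.

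The only step that needs genuine care is the first one, namely showing that $\Q_x^{[k],B\leadsto}$ charges $\{N_\infty^B=k\}$ fully for $k\ge1$. Unlike $Z^{(k),A}$, the martingale $Z^{[k],B}$ is not identically $1$ on $\{N_n^B<k\}$, so the uniform bound on the $H_j^B$ derived from (\ref{lem3.8-eqA}) is what makes this step work.
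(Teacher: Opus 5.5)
Your proposal is correct and is the adaptation of the proof of Proposition~\ref{thm2.9} that the paper intends. It uses the mixture representation (\ref{prop3.9-main}), the analogue of Lemma~\ref{lem2.7}, and the decomposition over $\{\varsigma_k=n\}$ with the Markov property at time $n$; you rightly note that the uniform bound on $H_j^B$ replaces the identity $Z_n^{(k),A}=1$ on $\{L_n^{A\circlearrowleft}<k\}$. One small precision: that bound holds only for $j\ge1$, since $H_0^B$ grows like $a$, but this is all you need because $k-N_n^B\ge1$ on $\{N_n^B<k\}$.
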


Suppose that $|B|<4$. Under $\P_0^{B\Updownarrow}$, the total number of completed excursions $L_\infty$ has a geometric distribution on $\Z_{\ge 0}$ with parameter $\kappa_B$. Conditional on $L_\infty=n$, the walk completes $n$ independent excursions, each having the law of an ordinary excursion conditioned not to cross any $B$-edge. It then leaves the origin along an edge chosen uniformly from $\{e_1,e_2,e_3,e_4\}\setminus B$, independently of these excursions. Conditional on the endpoint of this step being $v$, the subsequent path is independent of the preceding excursions and has law $\P_v^\updownarrow$.
For $x\ne 0$, the path decomposition depends on whether the walk hits the origin. Under $\P_x^{B\Updownarrow}$, conditional on $\tau<\infty$, the path up to $\tau$ has the same law as the original simple random walk started at $x$, conditioned on $N_\tau^B=0$, and stopped at $\tau$. Under this conditional law, the post-$\tau$ process is independent of the stopped path and has law $\P_0^{B\Updownarrow}$. Conditional on $\tau=\infty$, the entire path has law $\P_x^\updownarrow$. These statements are made precise in the following proposition. The proof follows by adapting the arguments used to prove Propositions \ref{thm2.10} and \ref{thm2.12} and is omitted.

\begin{prop}
\label{thm3.11}
Assume that $|B|<4$. For $n\ge 0$ and $v\in \{v_1,v_2,v_3,v_4\}\setminus \widetilde{B}$, we have
\begin{align*}
&\P_0^{B\Updownarrow}\left[\left(\prod_{k=1}^n G_k\circ \theta_{\rho_{k-1}}\right)\cdot H\circ \theta_{\rho_{n}+1},\ L_\infty =n,\ S_{\rho_n+1}=v \right]\\
   &\qquad =\frac{1}{4-|B|}\cdot \kappa_B\left(\prod_{i=1}^n \P_0\Big[G_i,\ N_\tau^B=0\Big]\right)\cdot \P_{v}^{\updownarrow}[H],
  \stepcounter{equation}\tag{\theequation} 
 \end{align*}
where $G_1,...,G_n$ denote bounded $\F_\tau$-measurable random variables and $H$ denotes a bounded $\F_\infty$-measurable random variable. For $n=0$, both products are interpreted as $1$. In particular,
 \begin{align}
   \P_0^{B\Updownarrow}(L_\infty=n)=\kappa_B(1-\kappa_B)^n\qquad \text{for}\ n\ge 0.
 \end{align}
Moreover, for $x\neq 0$, we have
\begin{align}
  \P_x^{B\Updownarrow}\Big[H,\ \tau=\infty\Big]&=\frac{a(x)}{H_0^B(x)}\P_x^{\updownarrow}[H],\\
  \P_x^{B\Updownarrow}\Big[G\cdot H\circ \theta_\tau,\ \tau<\infty\Big]&=\frac{H_0^B(0)}{H_0^B(x)}\P_x\Big[G,\ N_\tau^B=0\Big]\cdot \P_0^{B\Updownarrow}[H],
\end{align}
where $G$ denotes a bounded $\F_\tau$-measurable random variable and $H$ denotes a bounded $\F_\infty$-measurable random variable.
\end{prop}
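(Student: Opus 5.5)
The proof adapts Propositions \ref{thm2.10} and \ref{thm2.12}, with $\widetilde{h}^A$ replaced by $H_0^B$ and $L^{A\circlearrowleft}$ by $N^B$. First I will collect the needed values of $H_0^B$. From (\ref{Def-H0}) together with $a(0)=0$ and $\Sigma^a_{0,B}=|B|$, we get
\begin{align*}
H_0^B(0)=\frac{(4-|B|)(|B|+4-|B|)}{16\kappa_B}=\frac{4-|B|}{4\kappa_B}.
\end{align*}
For $x\neq 0$, write $H_0^B(x)=a(x)+R^B(x)$, where $R^B$ collects the terms that remain bounded. I expect $R^B$ to be related to $U_x$ from Lemma \ref{lem3.4}. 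Indeed, $H_0^B(x)=\lim G_0(q)\P_x(N_T^B=0)$, and splitting at $\tau$ should give $H_0^B(x)=a(x)+\P_x(N_\tau^B=0)H_0^B(0)$, which is the identity I will aim to verify. On a $B$-edge endpoint $v\in\widetilde{B}$ the killed chain cannot step to $0$, but this does not matter.

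\textbf{Excursion part.} Fix a bounded $\F_\tau$-measurable $G$. Apply optional sampling to the martingale $H_0^B(S_n)1_{\{N_n^B=0\}}$ at $\tau\wedge m$ under $\P_0$. On $\{\tau\le m\}$ we have $S_\tau=0$, so
\begin{align*}
\P_0^{B\Updownarrow}[G,\ \tau\le m]=\P_0[G,\ \tau\le m,\ N_\tau^B=0].
\end{align*}
Letting $m\to\infty$ gives $\P_0^{B\Updownarrow}[G,\ \tau<\infty]=\P_0[G,\ N_\tau^B=0]$. In particular the return probability is $1-\kappa_B$.

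\textbf{Escape part.} For $v\notin\widetilde{B}$ and $H_k$ that is $\F_k$-measurable, with $m>k$,
\begin{align*}
\P_0^{B\Updownarrow}[H_k\circ\theta_1,\ \tau>m,\ S_1=v]=\frac{1}{4H_0^B(0)}\P_v\bigl[H_k\,H_0^B(S_{m-1})1_{\{\tau>m-1\}}\bigr].
\end{align*}
Before $\tau$ no $B$-edge can be crossed, because every $B$-edge contains $0$. Substituting $H_0^B=a+R^B$, the $a$-part gives $a(v)\P_v^{\updownarrow}[H_k]=\P_v^{\updownarrow}[H_k]$ by (\ref{avi=1}) and optional sampling. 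The $R^B$-part vanishes as $m\to\infty$, since $R^B$ is bounded (via $0\le\P_x(N_\tau^B=0)\le 1$) and $\P_v(\tau>m-1)\to0$. This yields the factor
\begin{align*}
\frac{1}{4H_0^B(0)}=\frac{\kappa_B}{4-|B|}.
\end{align*}
Iterating the strong Markov property at $\rho_1,\dots,\rho_n$ and using a monotone class argument gives the displayed decomposition. Summing over the $4-|B|$ admissible $v$ gives $\P_0^{B\Updownarrow}(L_\infty=n)=(1-\kappa_B)^n\kappa_B$.

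\textbf{Case $x\neq0$.} Optional sampling at $\tau\wedge n$ gives
\begin{align*}
\P_x^{B\Updownarrow}[G,\ \tau<\infty]=\frac{H_0^B(0)}{H_0^B(x)}\P_x[G,\ N_\tau^B=0],
\end{align*}
and the strong Markov property at $\tau$ then yields the second formula. For $\tau=\infty$, I will compute $\P_x^{B\Updownarrow}[H_n,\ \tau>m]$ as $H_0^B(x)^{-1}\P_x[H_n\,H_0^B(S_m)1_{\{\tau>m,\ N_m^B=0\}}]$. As above, the $R^B$-part vanishes, and the $a$-part gives $\frac{a(x)}{H_0^B(x)}\P_x^{\updownarrow}[H_n]$. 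Note that on $\{\tau>m\}$ from $x\neq0$ the walk has not yet used a $B$-edge to reach $0$. It could have crossed a $B$-edge only by leaving $0$, which is impossible before $\tau$; hence $N_m^B=0$ holds automatically there.

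\textbf{Main obstacle.} The main obstacle is the decomposition $H_0^B(x)=a(x)+(\text{bounded})$. I would verify it directly from (\ref{Def-H0}) using $\Sigma^a_{x,B}=|B|a(x)+O(1)$, which holds since $a(x-v)-a(x)\to0$ as $|x|\to\infty$ and is finite for each $x$. The coefficient of $a(x)$ then becomes
\begin{align*}
\frac{\Sigma_{B,B}^a+4|B|-|B|^2+(4-|B|)|B|}{16\kappa_B}=\frac{\Sigma^a_{B,B}+8|B|-2|B|^2}{16\kappa_B}=1
\end{align*}
by Lemma \ref{lem3.3}. Only the boundedness of the remainder is needed, and this follows because $a(x-v)-a(x)$ is bounded uniformly in $x$.
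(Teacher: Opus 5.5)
Your proposal is correct and is essentially the argument the paper has in mind: it adapts Propositions~\ref{thm2.10} and~\ref{thm2.12}. The value $H_0^B(0)=\frac{4-|B|}{4\kappa_B}$ produces the constant $\frac{\kappa_B}{4-|B|}$, and the fact that $N^B$ cannot increase before $\tau$ (when starting away from $0$) plays the role that $L^{A\circlearrowleft}$ played there. The one real input, boundedness of $H_0^B-a$, is exactly what the paper records in Section~\ref{S4} as $H_0^B(x)=a(x)+\frac{4-|B|}{4}U_x$ with $0\le U_x\le 4/|B|$, and your coefficient computation via Lemma~\ref{lem3.3} is an equivalent verification (your direct limit argument for the $\tau=\infty$ part even spares you the identity $H_0^B(x)=a(x)+H_0^B(0)\P_x(N_\tau^B=0)$, which nonetheless holds since $U_x=\P_x(N_\tau^B=0)/\kappa_B$).
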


We next relate $\P_0^{B\Updownarrow}$ to the measures introduced in Subsection \ref{S2.2}.

\begin{prop}
Assume that $|B|<4$, and let $A_B:=\{(i,j)\in\{1,2,3,4\}^2:\ v_i\in\widetilde{B}\text{ or }v_j\in\widetilde{B}\}$. Then
\begin{align*}
\P_0^{B\Updownarrow}=\P_{0,0}^{A_B\updownarrow}\Big(\cdot\mid S_{g+1}\notin \widetilde{B}\Big),
\end{align*}
where $g:=\sup\{n\ge 0:\ S_n=0\}$ denotes the last exit time from the origin.
\end{prop}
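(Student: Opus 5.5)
The plan is to compare the two explicit path decompositions already available, Proposition \ref{thm2.10} for $\P_{0,0}^{A_B\updownarrow}$ and Proposition \ref{thm3.11} for $\P_0^{B\Updownarrow}$. I will show that they agree, up to the normalization coming from the conditioning, on a class of product functionals that determines a probability measure on $\F_\infty$.

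\emph{Step 1 (combinatorial identification).} Under $\P_0$, the walk stays away from the origin strictly between times $0$ and $\tau$. So the only edges incident to the origin that the first excursion can cross are its first edge $\{0,v_{I_1}\}$ and its last edge $\{v_{J_1},0\}$. Consequently
\begin{align*}
\{N_\tau^B=0\}=\{v_{I_1}\notin\widetilde{B},\ v_{J_1}\notin\widetilde{B}\}=\{(I_1,J_1)\notin A_B\}\qquad \P_0\text{-a.s.}
\end{align*}
It follows that $\P_0[G,\ N_\tau^B=0]=\P_0[G,\ (I_1,J_1)\notin A_B]$ for every bounded $\F_\tau$-measurable $G$, and that $p_{A_B}=\kappa_B$. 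Since $|B|<4$, there is some $i$ with $v_i\notin\widetilde{B}$. Then $(i,i)\notin A_B$, so $A_B\neq\{1,2,3,4\}^2$ and Proposition \ref{thm2.10} applies.

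\emph{Step 2 (the conditioning event).} Under $\P_{0,0}^{A_B\updownarrow}$, we have $L_\infty<\infty$ a.s. by Proposition \ref{thm2.10}, and $g=\rho_n$ on $\{L_\infty=n\}$. Taking all $G_k\equiv1$ and $H\equiv1$ in Proposition \ref{thm2.10} gives
\begin{align*}
\P_{0,0}^{A_B\updownarrow}\big(L_\infty=n,\ S_{\rho_n+1}=v_i\big)=\frac{p_{A_B}}{4}(1-p_{A_B})^n.
\end{align*}
Summing over $n\ge0$ and over the $4-|B|$ indices $i$ with $v_i\notin\widetilde{B}$ yields
\begin{align*}
\P_{0,0}^{A_B\updownarrow}(S_{g+1}\notin\widetilde{B})=\frac{4-|B|}{4}>0,
\end{align*}
so the conditional measure is well defined.

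\emph{Step 3 (matching on product functionals).} Fix $n\ge0$, $v\notin\widetilde{B}$, and bounded functionals $G_1,\dots,G_n$ (each $\F_\tau$-measurable) and $H$ ($\F_\infty$-measurable). Dividing the formula of Proposition \ref{thm2.10} by $(4-|B|)/4$ and using Step 1 gives
\begin{align*}
\P_{0,0}^{A_B\updownarrow}\Big[\Big(\prod_{k=1}^n G_k\circ\theta_{\rho_{k-1}}\Big) H\circ\theta_{\rho_n+1},\ L_\infty=n,\ S_{\rho_n+1}=v\ \Big|\ S_{g+1}\notin\widetilde{B}\Big]
=\frac{\kappa_B}{4-|B|}\prod_{k=1}^n\P_0\big[G_k,\ N_\tau^B=0\big]\,\P_v^{\updownarrow}[H].
\end{align*}
The right-hand side is exactly the expression in Proposition \ref{thm3.11}. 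For $v\in\widetilde{B}$, both sides of the corresponding identity vanish: the left side because of the conditioning, and the right side because $\P_0^{B\Updownarrow}$ never crosses a $B$-edge, so it cannot take its final step along one.

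\emph{Step 4 (extension to all of $\F_\infty$).} Both measures are carried by $\{L_\infty<\infty\}$. On that event, the path is the concatenation of the excursions $\theta_{\rho_{k-1}}$ stopped at $\tau$ for $k\le n$, the single step to $S_{\rho_n+1}$, and the tail $\theta_{\rho_n+1}$. It remains to check that, on each event $\{L_\infty=n,\ S_{\rho_n+1}=v\}$, the functionals of Step 3 form a $\pi$-system generating the trace of $\F_\infty$. A monotone class argument then identifies the two measures on each such event, and summing over $n$ and $v$ gives equality on $\F_\infty$.

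I expect this generation claim to be the only delicate point. I plan to handle it by taking each $G_k$ to be an indicator of a finite initial excursion path, and $H$ to be an indicator of a cylinder event $\{S_1=y_1,\dots,S_m=y_m\}$; products of these determine all finite-dimensional cylinder events restricted to $\{L_\infty=n,\ S_{\rho_n+1}=v\}$.
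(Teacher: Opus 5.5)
Your proposal is correct and follows essentially the same route as the paper. You identify $p_{A_B}=\kappa_B$ via $\{(I_1,J_1)\in A_B\}=\{N_\tau^B\ge 1\}$, compute $\P_{0,0}^{A_B\updownarrow}(S_{g+1}\notin\widetilde{B})=(4-|B|)/4$ from Proposition~\ref{thm2.10}, match its decomposition with that of Proposition~\ref{thm3.11}, and conclude by a monotone class argument, whose generating $\pi$-system your Step~4 spells out where the paper leaves it implicit.
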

\begin{proof}
By Proposition \ref{thm2.10}, we have
\begin{align*}
   \P_{0,0}^{A_B\updownarrow}( S_{g+1}\notin \widetilde{B})&=\sum_{v\in \{v_1,v_2,v_3,v_4\}\setminus \widetilde{B}}\sum_{n=0}^\infty \P_{0,0}^{A_B\updownarrow}(S_{\rho_n+1}=v,\ L_\infty=n)\\
   &=\sum_{v\in \{v_1,v_2,v_3,v_4\}\setminus \widetilde{B}}\sum_{n=0}^\infty \frac{p_{A_B}}{4}(1-p_{A_B})^n\\
   &=\frac{4-|B|}{4}.
\stepcounter{equation}\tag{\theequation} 
\end{align*}
Since 
\begin{align}
  \Big\{(I_1,J_1)\in A_B\Big\}=\{N_\tau^B\ge 1\},
\end{align}
we have $p_{A_B}=\kappa_B.$ Thus, we have
\begin{align}
  \frac{\kappa_B}{4-|B|}=\frac{p_{A_B}}{4}\cdot \frac{1}{\P_{0,0}^{A_B\updownarrow}( S_{g+1}\notin \widetilde{B})}.
\end{align}
The assertion then follows by comparing Propositions \ref{thm2.10} and \ref{thm3.11} and applying the monotone class theorem. This completes the proof.
\end{proof}

Finally, we consider the case $B=B_\ast$. By (\ref{lem3.8-eq2}), we have $H_0^{B_\ast}(x)=a(x)$, and hence
\begin{align}
  \P_x^{B_\ast \Updownarrow}=\P_x^\updownarrow\qquad \text{for}\ x\neq 0.
\end{align}
Using this identity, we obtain the following path decomposition formula. The proof is a straightforward adaptation of the proof of Proposition \ref{thm2.9} and is omitted.

\begin{prop}
\label{3.13}
Let $x\in\Z^2$ satisfy $H_f^{B_\ast}(x,0)>0$. For $k\ge 1$ and $v\in\widetilde{B}_\ast$, we have
 \begin{align*}
  \Q_x^{f,B_\ast\leadsto}\Big[F\cdot G\circ \theta_{\varsigma_k},\ N_\infty^{B_\ast}=k,\ S_{\varsigma_k}=v\Big]&=\frac{f(k)}{H_f^{B_\ast}(x,0)}\P_x\Big[F,\ S_{\varsigma_k}=v\Big]\cdot \P_v^{\updownarrow}[G].
 \end{align*}
If $x\neq 0$, we also have
 \begin{align}
      \Q_x^{f,B_\ast\leadsto}\Big[G,\ N_\infty^{B_\ast}=0\Big]&=\frac{a(x)f(0)}{H_f^{B_\ast}(x,0)}\P_x^{\updownarrow}[G],
 \end{align}
 where $F$ denotes a bounded $\F_{\varsigma_k}$-measurable random variable and $G$ denotes a bounded $\F_\infty$-measurable random variable. Moreover, for every $k\ge 1$, we have
  \begin{align}
  \Q_x^{f,B_\ast\leadsto}\Big(N_\infty^{B_\ast}=k,\ S_{\varsigma_k}=0\Big)=0, 
 \end{align}
 and if $x=0$, then
 \begin{align}
   \Q_0^{f,B_\ast\leadsto}(N_\infty^{B_\ast}=0)=0.
 \end{align}
\end{prop}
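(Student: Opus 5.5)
The plan is to follow the proof of Proposition \ref{thm2.9}, using the explicit values of $H_k^{B_\ast}$ from the proof of Proposition \ref{lem3.8}. By (\ref{lem3.8-eq2}), (\ref{lem3.8-eq3}) and (\ref{lem3.8-eq4}), we have $H_0^{B_\ast}(x)=a(x)$, while for $k\ge1$ the value $H_k^{B_\ast}(y)$ equals $1$ if either $y=0$ and $k$ is odd, or $y\neq0$ and $k$ is even; otherwise it is $0$. Hence
\begin{align*}
Z_n^{[k],B_\ast}=H_{k-N_n^{B_\ast}}^{B_\ast}(S_n)1_{\{N_n^{B_\ast}\le k\}}
\end{align*}
takes a simple form. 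Since every step into or out of $0$ crosses a $B_\ast$-edge, $S_n=0$ forces $N_n^{B_\ast}$ and $n$ to have the same parity as $N_0$-offset from the start. Consequently $Z_n^{[k],B_\ast}=1_{\{N_n^{B_\ast}<k\}}+a(S_n)1_{\{N_n^{B_\ast}=k\}}$ on the relevant parity class. In particular, at $N_n^{B_\ast}=k$ it equals $H_0^{B_\ast}(S_n)=a(S_n)$, which vanishes when $S_n=0$.

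\textbf{Step 1 (mixture).} I would first establish the mixture formula
\begin{align*}
\Q_x^{f,B_\ast\leadsto}=\sum_{k}\frac{H_k^{B_\ast}(x)f(k)}{H_f^{B_\ast}(x,0)}\Q_x^{[k],B_\ast\leadsto},
\end{align*}
where the sum runs over those $k$ with $H_k^{B_\ast}(x)>0$. For each such $k$ I would then show that $\Q_x^{[k],B_\ast\leadsto}(N_\infty^{B_\ast}=k)=1$, exactly as in Lemma \ref{lem2.7}. Recurrence gives $N_\infty^{B_\ast}=\infty$ $\P_x$-a.s., so $\Q_x^{[k]}(N_n<k)=\P_x(N_n<k)\to0$, and the density vanishes on $\{N_n>k\}$.

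\textbf{Step 2 (post-$\varsigma_k$ decomposition).} Fix $k\ge1$, $F\in\F_{\varsigma_k}$ and $G_m\in\F_m$. Summing over $\{\varsigma_k=n\}$ and applying the Markov property at $n$ gives
\begin{align*}
\Q_x^{[k]}\bigl[F\cdot G_m\circ\theta_{\varsigma_k},\,S_{\varsigma_k}=v\bigr]
=\frac{1}{H_k^{B_\ast}(x)}\,\P_x\bigl[F,\,S_{\varsigma_k}=v\bigr]\,\P_v\bigl[G_m\,a(S_m)1_{\{N_m^{B_\ast}=0\}}\bigr].
\end{align*}
For $v\neq0$, the event $\{N_m^{B_\ast}=0\}$ coincides with $\{m<\tau\}$, because the first step into $0$ is a crossing. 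Since $a(v)=1$ by (\ref{avi=1}), the last factor is $\P_v^\updownarrow[G_m]$. For $v=0$, the density at time $\varsigma_k$ is $a(0)=0$, which yields $\Q_x^{f,B_\ast\leadsto}(N_\infty^{B_\ast}=k,\ S_{\varsigma_k}=0)=0$. Multiplying by the weight $H_k^{B_\ast}(x)f(k)/H_f^{B_\ast}(x,0)$ cancels the factor $H_k^{B_\ast}(x)$ and produces the stated coefficient $f(k)/H_f^{B_\ast}(x,0)$. A monotone class argument then extends the identity from $G_m$ to general $G$.

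\textbf{Step 3 (the $N_\infty^{B_\ast}=0$ case).} For $x\neq0$, the event $\{N_\infty^{B_\ast}=0\}$ has $\Q_x^{f,B_\ast\leadsto}$-probability $H_0^{B_\ast}(x)f(0)/H_f^{B_\ast}(x,0)=a(x)f(0)/H_f^{B_\ast}(x,0)$, and conditional on it the law is $\P_x^{B_\ast\Updownarrow}=\P_x^\updownarrow$. For $x=0$, we have $H_0^{B_\ast}(0)=0$, so this term carries no mass and $\Q_0^{f,B_\ast\leadsto}(N_\infty^{B_\ast}=0)=0$.

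\textbf{Main obstacle.} The delicate point is the parity bookkeeping in Steps 1 and 2. Some $Q^{[k]}$ are undefined because $H_k^{B_\ast}(x)=0$, and one must check that these $k$ carry zero weight, so that the mixture still sums to the full measure and the coefficients come out exactly as stated. I would handle this by verifying directly from the martingale identity $\P_x[Z_n^{[k]}]=H_k^{B_\ast}(x)=0$ that $Z_n^{[k]}=0$ $\P_x$-a.s. for every $n$ in these cases.
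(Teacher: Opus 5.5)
Your proposal is correct and follows essentially the route the paper intends; the paper omits this proof as a straightforward adaptation of Proposition \ref{thm2.9}. As you describe, that route is a mixture over $\Q_x^{[k],B_\ast\leadsto}$ restricted to those $k$ with $H_k^{B_\ast}(x)>0$, the analogue of Lemma \ref{lem2.7}, and the Markov property at $\varsigma_k$, using $H_0^{B_\ast}=a$, $\{N_m^{B_\ast}=0\}=\{m<\tau\}$ under $\P_v$ for $v\neq0$, and $a(v_i)=1$.

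Two small points to tidy up:
\begin{itemize}
\item Your parity sentence is garbled. The precise facts are (\ref{NBast-L})--(\ref{NBast-L2}): the parity of $N_n^{B_\ast}$ is determined by whether $S_n=0$ and whether $x=0$, not by $n$.
\item You should state explicitly that for $k\ge1$ with $H_k^{B_\ast}(x)=0$ the right-hand side of the first identity also vanishes. This holds because $S_{\varsigma_k}=0$ $\P_x$-a.s. for such $k$. It follows either from parity or from $Z^{[k],B_\ast}_{\varsigma_k}=a(S_{\varsigma_k})=0$.
\end{itemize}
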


This completes the path decomposition of the penalized process under $\Q_x^{f,B\leadsto}$.


\section{Universal $\sigma$-Finite Measure}
\label{S4}
In this section, we prove Theorem \ref{thm1.3}.

We first consider the penalized measure $\Q_x^{f,A\circlearrowleft}$. For each $x\in \Z^2$, define a measure $\mathscr{Q}_x^{A\circlearrowleft}$ on $\F_\infty$ by
\begin{align}
\mathscr{Q}_x^{A\circlearrowleft}&:=\Big(1+p_Aa(x)\Big)\Q_x^{(0),A\circlearrowleft}+ \sum_{k=1}^\infty \Q_x^{(k),A\circlearrowleft}.
\end{align}
By (\ref{lem2.7-eq1}), $\mathscr{Q}_x^{A\circlearrowleft}$ is $\sigma$-finite. We have
\begin{align*}
\label{universal-unweight}
1_{\{f(L_\infty^{A\circlearrowleft})>0\}}\mathscr{Q}_x^{A\circlearrowleft}&={h_f^A(x,0,0)}\cdot\sum_{k=0}^\infty \frac{w_f^{(k),A}(x)}{f(k)}\Q_x^{(k),A\circlearrowleft}=\frac{h_f^A(x,0,0)}{f(L_\infty^{A\circlearrowleft})}\cdot \Q_x^{f,A\circlearrowleft},
\stepcounter{equation}\tag{\theequation} 
  \end{align*}
where the sum is restricted to those $k\ge 0$ for which $f(k)>0$. Therefore, the measure $\mathscr{Q}_x^{A\circlearrowleft}$ may be regarded as the measure obtained from $\Q_x^{f,A\circlearrowleft}$ by removing the weight.

We now recall the results on local-time penalization obtained by Najnudel--Roynette--Yor \cite{Najnudel-Roynette-Yor}. Using the local time $\widetilde{L}_n:=\sum_{m=0}^n 1_{\{S_m=0\}}$ at the origin, they studied the penalization problem with weight $\Gamma_n=r^{\widetilde{L}_{n-1}}$ for $r\in (0,1)$. They showed that the resulting penalized measure $\widetilde{\Q}_x$ is given by
\begin{align}
  \frac{d\widetilde{\Q}_x}{d\P_x}\Big|_{\F_n}&:=\frac{a(S_n)+\frac{r}{1-r}}{a(x)+\frac{r}{1-r}}\cdot r^{\widetilde{L}_{n-1}}\qquad \text{for}\ n\ge 0,\ x\in \Z^2,
\end{align}
where we set $\widetilde{L}_{-1}:=0.$ Furthermore, they defined the measure $\widetilde{\mathscr{Q}}_x$, obtained by removing the weight, as follows:
\begin{align}
  \label{NRV-universal}
  \widetilde{\mathscr{Q}}_x&:=\left(a(x)+\frac{r}{1-r}\right)r^{-\widetilde{L}_\infty}\cdot \widetilde{\Q}_x\qquad \text{on}\ \F_\infty.
\end{align}
Theorem 4.1.3 of \cite{Najnudel-Roynette-Yor} shows that the measure $\widetilde{\mathscr{Q}}_x$ is independent of $r$ and is $\sigma$-finite. Consequently, the penalized measures corresponding to different values of $r$ can all be expressed in terms of the same measure $\widetilde{\mathscr{Q}}_x$. In this sense, $\widetilde{\mathscr{Q}}_x$ is universal.

We now show that $\widetilde{\mathscr{Q}}_x$ also serves as a universal measure for penalization by the weight $\Gamma_n=f(L_n^{A\circlearrowleft})$.

\begin{prop}
\label{thm4.1}
For $x\in \Z^2$, we have
  \begin{align}
   \mathscr{Q}_x^{A\circlearrowleft}= p_A\widetilde{\mathscr{Q}}_x.
 \end{align}
\end{prop}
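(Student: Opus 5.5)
The plan is to prove the identity by showing that both $\sigma$-finite measures have the same explicit path decomposition. Each will be written as a sum of pieces of the form ``law of $S$ stopped at $\rho_m$'' followed by ``a step from $0$ to a uniformly chosen $v_i$, then $\P_{v_i}^{\updownarrow}$'', together with one extra piece $a(x)\P_x^\updownarrow$ for paths that never hit the origin. The target formula, for $x\in\Z^2$, is
\begin{align*}
\mathscr{D}_x:=a(x)\P_x^{\updownarrow}\,1_{\{x\neq0\}}+\sum_{m=0}^\infty \Big(\P_x\big|_{\F_{\rho_m}}\Big)\otimes\Big(\tfrac14\textstyle\sum_{i=1}^4 \P_{v_i}^{\updownarrow}\circ\theta_{1}^{-1}\Big).
\end{align*}
Here the $m$-th term means $\P_x[F\cdot 1_{\{S_1=v_i\}}H\circ\theta_1\ \text{composed at}\ \rho_m]$, that is, $\P_x[F]\cdot\frac14\P_{v_i}^{\updownarrow}[H]$ for bounded $F\in\F_{\rho_m}$ and $H\in\F_\infty$, on the event that $\rho_m$ is the last zero. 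I will show $\mathscr{Q}_x^{A\circlearrowleft}=p_A\mathscr{D}_x$ and $\widetilde{\mathscr{Q}}_x=\mathscr{D}_x$. Both measures are carried by $\{g<\infty\}$, where $g$ is the last zero (or by $\{\tau=\infty\}$), so these events partition the space and a monotone class argument suffices.

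\emph{Step 1: decomposing $\mathscr{Q}_x^{A\circlearrowleft}$.} Assume first $A\neq A_\ast$. By Corollary \ref{cor-2.8}, $w_f^{(k),A}(x)\Q_x^{(k),A\circlearrowleft}$ is $\Q_x^{f,A\circlearrowleft}$ restricted to $\{L_\infty^{A\circlearrowleft}=k\}$. Proposition \ref{thm2.9} then shows that the $k$-th term ($k\ge1$) of $\mathscr{Q}_x^{A\circlearrowleft}$ is $\P_x|_{\F_{\sigma_k}}\otimes \P_{0,0}^{A\updownarrow}$, and that the $k=0$ term is $(1+p_Aa(x))\P_{x,0}^{A\updownarrow}$. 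Insert Proposition \ref{thm2.10} for $\P_{0,0}^{A\updownarrow}$: after $\sigma_k$ there are $n$ excursions with sub-probability laws $\P_0[\,\cdot\,,(I_1,J_1)\notin A]$, followed by the factor $\frac{p_A}{4}\P_{v_i}^{\updownarrow}$. By the strong Markov property under $\P_x$, the path up to $\sigma_k$ concatenated with these $n$ $A^c$-excursions is exactly $\P_x$ restricted to $\{\rho_m \text{ is the } m\text{-th zero},\ \text{the last } A\text{-excursion among the first } m \text{ is the } k\text{-th}\}$, stopped at $\rho_m$.

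For the $k=0$ term, use Proposition \ref{thm2.12} when $x\neq0$. It gives $p_Aa(x)\P_x^\updownarrow$ plus $\P_x|_{\F_\tau}\otimes\P_{0,0}^{A\updownarrow}$, and the latter corresponds to the sequences with no $A$-excursion at all. When $x=0$, apply Proposition \ref{thm2.10} directly. Summing over $k$ and $n$ with $m$ fixed, the events ``index of the last $A$-excursion'' partition all excursion sequences of length $m$, so $\mathscr{Q}_x^{A\circlearrowleft}=p_A\mathscr{D}_x$. The case $A=A_\ast$ is the same computation, using Proposition \ref{thm2.11} in place of Proposition \ref{thm2.10} and $p_{A_\ast}=1$.

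\emph{Step 2: decomposing $\widetilde{\mathscr{Q}}_x$.} Here I would fix $r\in(0,1)$ and repeat the argument of Propositions \ref{thm2.10} and \ref{thm2.12} for the martingale $(a(S_n)+\frac{r}{1-r})r^{\widetilde L_{n-1}}$. Optional stopping at $\tau\wedge m$ will show that under $\widetilde\Q_x$ each completed excursion carries weight $r$ times its $\P_0$-law. The final escape from $0$ to $v_i$ then has weight $\frac{1}{a(x)+r/(1-r)}\cdot r^{\widetilde L}\cdot\frac{1}{4}\cdot a(v_i)=\frac{r^{m+1}}{4(a(x)+r/(1-r))}$ times $\P^\updownarrow_{v_i}$, using $a(v_i)=1$ from (\ref{avi=1}); the term $\frac{r}{1-r}\P_{v_i}(L_{m-1}=0)$ vanishes in the limit exactly as in the proof of Proposition \ref{thm2.10}. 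For $x\neq0$, the no-hit piece is $\frac{a(x)}{a(x)+r/(1-r)}\P^\updownarrow_x$. Multiplying by $(a(x)+\frac{r}{1-r})r^{-\widetilde L_\infty}$, where $\widetilde L_\infty=m+1$ on the $m$-th piece (one must check the indexing convention of $\widetilde L$ carefully), cancels all $r$-dependence and yields $\mathscr{D}_x$. This also re-proves the $r$-independence from \cite{Najnudel-Roynette-Yor}.

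The main obstacle is Step 2, specifically getting the exact constants right. One must check that the exponent of $r$ matches the number of zeros counted by $\widetilde L_\infty$, including the zero at time $0$ when $x=0$ and the first hit when $x\neq0$. One must also confirm that the density contributes $a(v_i)=1$ rather than $a(v_i)+\frac{r}{1-r}$ in the limit. I will do this carefully for $x=0$ first and then obtain $x\neq0$ via the strong Markov property at $\tau$. The combinatorial resummation in Step 1 is routine but must handle $m=0$ and the $k=0$ term uniformly.
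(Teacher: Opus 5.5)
Your proof is correct, but it takes a different route from the paper's. The paper never decomposes $\widetilde{\mathscr{Q}}_x$. It fixes $f=f_r$ and notes that the density of $h_{f_r}^A(x,0,0)\Q_x^{f_r,A\circlearrowleft}$ with respect to $\widetilde{\Q}_x$ on $\F_n$ is $r^{L_n^{A\circlearrowleft}-\widetilde L_{n-1}}\bigl(a(x)+\frac{r}{1-r}\bigr)h_{f_r}^A(S_n,0,V_n)/\bigl(a(S_n)+\frac{r}{1-r}\bigr)$. It then uses transience under both measures (quoted from Najnudel--Roynette--Yor for $\widetilde\Q_x$, and deduced from the path decomposition for $\Q_x^{f_r,A\circlearrowleft}$) to show that this density converges almost surely under both measures to $r^{L_\infty^{A\circlearrowleft}-\widetilde L_\infty}p_A\bigl(a(x)+\frac{r}{1-r}\bigr)$. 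Finally it applies the Lebesgue decomposition theorem for martingale densities (Theorem 4.3.5 of Durrett) and removes the weights via (\ref{universal-unweight}) and (\ref{NRV-universal}).

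You instead identify both measures with an explicit last-exit decomposition $\mathscr{D}_x$, and your constants check out. Under $\widetilde\Q_x$, the piece $\{g=\rho_m,\ S_{\rho_m+1}=v_i\}$ has weight $r^{m+1}/\bigl(4(a(x)+\frac{r}{1-r})\bigr)$ and the piece $\{\tau=\infty\}$ has weight $a(x)/\bigl(a(x)+\frac{r}{1-r}\bigr)$. Also $\widetilde L_\infty=m+1$ on the $m$-th piece, both for $x=0$ and for $x\neq0$. These weights sum to $1$, so $\widetilde\Q_x$ is carried by your pieces without any appeal to transience. The resummation over $(k,n)$ in Step 1 is also right.

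Each approach buys something different. Yours is self-contained: it needs no external transience result and no Lebesgue decomposition. It also gives an explicit description of the universal measure itself, namely $a(x)\P_x^{\updownarrow}1_{\{x\neq 0\}}$ plus a sum over the index of the last zero. The paper's is shorter and softer, needing only transience and the asymptotics $h^A_{f_r}(y,0,v)\sim p_Aa(y)$. That is also what transfers to $N^B$ with the single estimate $H^B_{f_r}(x,0)/a(x)\to1$. Your method would instead require the full $B$-decompositions of Propositions \ref{thm3.10}, \ref{thm3.11} and \ref{3.13}, whose proofs the paper omits, including the degenerate case $B=B_\ast$.

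One small point: read off the $k$-th term of $\mathscr{Q}_x^{A\circlearrowleft}$ from the first display in the proof of Proposition \ref{thm2.9}, which concerns $\Q_x^{(k),A\circlearrowleft}$ directly. Dividing Corollary \ref{cor-2.8} by $w_f^{(k),A}(x)$ fails when $f(k)=0$, since that weight then vanishes.
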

\begin{proof}
For $r\in (0,1)$, let $f_r(k):=r^k$. Since $\mathscr{Q}_x^{A\circlearrowleft}$ does not depend on $f$, it suffices to consider $f=f_r$. Let
\begin{align}
  \widehat{\Q}_x^{r,A\circlearrowleft}:=h_{f_r}^A(x,0,0)\Q_x^{f_r,A\circlearrowleft}.
\end{align}
By the definition of $\Q_x^{f_r,A\circlearrowleft}$, we have
\begin{align*}
\frac{d\widehat{\Q}_x^{r,A\circlearrowleft}}{d\P_x}\Big|_{\F_n}&=h_{f_r}^A(x,0,0)\cdot\frac{d{\Q}_x^{f_r,A\circlearrowleft}}{d\P_x}\Big|_{\F_n} =h_{f_r}^A(S_n,L_n^{A\circlearrowleft},V_n)=r^{L_n^{A\circlearrowleft}}h_{f_r}^A(S_n,0,V_n).
  \stepcounter{equation}\tag{\theequation} 
\end{align*}
Thus, we obtain
\begin{align}
  \label{thm4.1-eqA}
  \frac{d\widehat{\Q}_x^{r,A\circlearrowleft}}{d\widetilde{\Q}_x}\Big|_{\F_n}=r^{L_n^{A\circlearrowleft}-\widetilde{L}_{n-1}}\cdot \frac{h_{f_r}^A(S_n,0,V_n)}{a(S_n)+\frac{r}{1-r}}\cdot \left(a(x)+\frac{r}{1-r}\right).
\end{align}

By Proposition 4.2.3 of \cite{Najnudel-Roynette-Yor}, the process $S$ under $\widetilde{\Q}_x$ is transient. 
Moreover, the path decomposition established in Subsection \ref{S2.2}, together with the transience of $S$ under  $\P_x^{\updownarrow}$ (see, e.g., Proposition 1.1 of \cite{GPV}), implies that $S$ is also transient under $\Q_x^{f_r,A\circlearrowleft}$. Thus, we have
\begin{align}
\label{thm4.1-eq1}
  L_n^{A\circlearrowleft}\to L_\infty^{A\circlearrowleft}<\infty,\qquad \widetilde{L}_n\to \widetilde{L}_\infty<\infty,
\end{align}
$\widetilde{\Q}_x\text{-a.s.\ and\ } \Q_x^{f_r,A\circlearrowleft}\text{-a.s.}$
Since $a(x)$ tends to infinity as $|x|\to \infty$ (see, e.g., Theorem 4.4.4 of \cite{Lawler-Limic}), we have
\begin{align}
  \lim_{n\to \infty}a(S_n)=\infty\qquad \widetilde{\Q}_x\text{-a.s.\ and\ } \Q_x^{f_r,A\circlearrowleft}\text{-a.s.}
\end{align}
Since $|r^A|\le 1$, we have
\begin{align}
\label{thm4.1-eq2}
 \lim_{n\to \infty}\frac{h_{f_r}^A(S_n,0,V_n)}{a(S_n)+\frac{r}{1-r}}=\lim_{n\to \infty}\frac{p_Aa(S_n)-r^A(S_n,V_n)+\frac{1}{1-r}}{a(S_n)+\frac{r}{1-r}}=p_A,
\end{align}
$\widetilde{\Q}_x\text{-a.s.\ and\ } \Q_x^{f_r,A\circlearrowleft}\text{-a.s.}$ By (\ref{thm4.1-eqA}), (\ref{thm4.1-eq1}), and (\ref{thm4.1-eq2}), Theorem 4.3.5 of \cite{Durrett} yields
\begin{align}
  \widehat{\Q}_x^{r,A\circlearrowleft}=r^{L_\infty^{A\circlearrowleft}-\widetilde{L}_\infty}\cdot p_A\left(a(x)+\frac{r}{1-r}\right)\cdot \widetilde{\Q}_x\qquad \text{on}\ \F_\infty.
\end{align}
Therefore, (\ref{universal-unweight}) and (\ref{NRV-universal}) imply the assertion. This completes the proof.
\end{proof}

We next consider the case of $\Q_x^{f,B\leadsto}$. For $x\in \Z^2$, we define the measure $\mathscr{Q}_x^{B\leadsto}$ on $\F_\infty$ as 
\begin{align}
  \mathscr{Q}_x^{B\leadsto}:=\sum_{k\ge 0:\ H_k^B(x)>0} H_k^B(x)\Q_x^{[k],B\leadsto}\qquad \text{on}\ \F_\infty.
\end{align}
By (\ref{Prop3.9-eq1}), $\mathscr{Q}_x^{B\leadsto}$ is $\sigma$-finite. For every $x$ such that $H_f^B(x,0)>0$, the mixture representation yields
\begin{align}
  1_{\{f(N_\infty^B)>0\}}\mathscr{Q}_x^{B\leadsto}=\frac{H_f^B(x,0)}{f(N_\infty^B)}\cdot \Q_x^{f,B\leadsto}\qquad \text{on}\ \F_\infty.
\end{align}
Therefore, the measure  $\mathscr{Q}_x^{B\leadsto}$ may be regarded as the measure obtained from $\Q_x^{f,B\leadsto}$ by removing the weight.

We now show that $\widetilde{\mathscr{Q}}_x$ also serves as a universal measure for penalization by the weight $\Gamma_n=f(N_n^B)$.

\begin{prop}
  For $x\in \Z^2$, we have
  \begin{align}
\mathscr{Q}_x^{B\leadsto}=\widetilde{\mathscr{Q}}_x\qquad \text{on}\ \F_\infty.
  \end{align}
\end{prop}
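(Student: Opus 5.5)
We follow the proof of Proposition \ref{thm4.1}, now with the exponential weight $f_r(k):=r^k$, $r\in(0,1)$. Since $\mathscr{Q}_x^{B\leadsto}$ does not depend on $f$, it suffices to identify it for one convenient $f$. Set $\widehat{\Q}_x^{r,B\leadsto}:=H_{f_r}^B(x,0)\Q_x^{f_r,B\leadsto}$ whenever $H_{f_r}^B(x,0)>0$. This holds for every $x$ when $|B|\le 3$. When $B=B_\ast$ and $x=0$, Proposition \ref{lem3.8} still gives positivity, because $f_r(n)>0$ for all $n$.

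By definition,
\[
\frac{d\widehat{\Q}_x^{r,B\leadsto}}{d\P_x}\Big|_{\F_n}=H_{f_r}^B(S_n,N_n^B)=r^{N_n^B}H_{f_r}^B(S_n,0),
\]
with $H_{f_r}^B(y,0)=\sum_{k\ge0}H_k^B(y)r^k$. Dividing by the density of $\widetilde{\Q}_x$ gives
\[
\frac{d\widehat{\Q}_x^{r,B\leadsto}}{d\widetilde{\Q}_x}\Big|_{\F_n}=r^{N_n^B-\widetilde{L}_{n-1}}\cdot\frac{H_{f_r}^B(S_n,0)}{a(S_n)+\frac{r}{1-r}}\cdot\Big(a(x)+\frac{r}{1-r}\Big).
\]
We pass to the limit $n\to\infty$ almost surely under both measures and then invoke Theorem 4.3.5 of \cite{Durrett}, exactly as before. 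Three inputs are needed.
\begin{enumerate}[(i)]
\item Transience of $S$ under $\widetilde{\Q}_x$ follows from Proposition 4.2.3 of \cite{Najnudel-Roynette-Yor}. Transience under $\Q_x^{f_r,B\leadsto}$ follows from Propositions \ref{thm3.10}, \ref{thm3.11} and \ref{3.13}, which say that the tail of the path is eventually a $\P_v^\updownarrow$ path, together with transience of $\P_v^\updownarrow$. Hence $N_n^B\to N_\infty^B<\infty$ and $\widetilde{L}_n\to\widetilde{L}_\infty<\infty$ almost surely, and $a(S_n)\to\infty$.
\item We need $H_{f_r}^B(y,0)/a(y)\to1$ as $a(y)\to\infty$. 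This is the main computational step.
\item Theorem 4.3.5 of \cite{Durrett} then yields
\[
\widehat{\Q}_x^{r,B\leadsto}=r^{N_\infty^B-\widetilde{L}_\infty}\Big(a(x)+\tfrac{r}{1-r}\Big)\widetilde{\Q}_x=r^{N_\infty^B}\widetilde{\mathscr{Q}}_x
\]
on $\F_\infty$, using (\ref{NRV-universal}).
\end{enumerate}

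For step (ii), expand (\ref{Def-H0}). It gives $H_0^B(y)=\frac{1}{16\kappa_B}(\Sigma_{B,B}^a+4|B|-|B|^2)a(y)+O(1)$, because $\Sigma^a_{y,B}-|B|a(y)$ is bounded (increments of $a$ over neighbours are bounded; indeed $a(y\pm v)-a(y)\to0$). By the first equality in (\ref{lem3.3-eq1}), $\Sigma_{B,B}^a+8|B|-2|B|^2=16\kappa_B$. The coefficient of $a(y)$ in $H_0^B$ is therefore $1-\frac{|B|(4-|B|)}{16\kappa_B}$.

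The $H_k^B$ for $k\ge1$ contribute as follows. From (\ref{Def-Hk}), the $a(y)$-dependent part of $H_k^B(y)$ is $-\frac{\Sigma_{B,B}^a}{128\kappa_B^2|B|}(\lambda_-^B)^{k-1}\bigl(2|B|^2a(y)-2|B|\Sigma_{y,B}^a\bigr)$, and $2|B|^2a(y)-2|B|\Sigma_{y,B}^a$ is bounded. So every $H_k^B$ with $k\ge1$ stays bounded as $a(y)\to\infty$. The bound is uniform in $k$, since $|\lambda_-^B|\le1$ and, by (\ref{lem3.8-eqA}), $|H_k^B|\le \frac{2}{|B|}+\frac{\Sigma_{B,B}^a}{8\kappa_B|B|}$. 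Summing against $r^k$ thus gives $H_{f_r}^B(y,0)=c_B\,a(y)+O(1)$, where $c_B=1-\frac{|B|(4-|B|)}{16\kappa_B}$.

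Here lies the obstacle. The Durrett argument forces the ratio to converge to a deterministic constant, and the claimed identity $\mathscr{Q}^{B\leadsto}_x=\widetilde{\mathscr{Q}}_x$ with no prefactor requires that constant to equal $1$. I therefore expect a finer identity to be needed: probably $\Sigma^a_{B,B}$ from Table \ref{table} combined with $\kappa_B$. Alternatively, one may use the more precise asymptotic $\Sigma_{y,B}^a=|B|a(y)+o(1)$, so that the leading coefficient reassembles to $\frac{1}{16\kappa_B}(\Sigma_{B,B}^a+4|B|-|B|^2+(4-|B|)|B|)=\frac{16\kappa_B}{16\kappa_B}=1$. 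This uses the correct leading coefficient $(4-|B|)|B|$ coming from the $\Sigma^a_{y,B}$ term, which I had dropped above. With this correction the limit is exactly $1$, in agreement with the case $B_\ast$, where $H_0^{B_\ast}=a$. I would carefully verify this cancellation first.

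Granting step (ii), conclude as follows. Comparing $\widehat{\Q}_x^{r,B\leadsto}=r^{N_\infty^B}\widetilde{\mathscr{Q}}_x$ with the mixture identity $1_{\{f_r(N_\infty^B)>0\}}\mathscr{Q}_x^{B\leadsto}=\frac{H_{f_r}^B(x,0)}{f_r(N_\infty^B)}\Q_x^{f_r,B\leadsto}$, and noting that $f_r>0$ everywhere so the indicator is identically one, gives $\mathscr{Q}_x^{B\leadsto}=\widetilde{\mathscr{Q}}_x$ on $\F_\infty$.
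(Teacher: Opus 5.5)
Your proposal is correct and follows essentially the paper's route: reduce, as in Proposition~\ref{thm4.1} (transience under both measures plus Theorem 4.3.5 of \cite{Durrett}), to showing $H_{f_r}^B(y,0)=a(y)+O(1)$, which the paper obtains from $H_0^B(x)=a(x)+\frac{4-|B|}{4}U_x$ with $0\le U_x\le 4/|B|$ together with the uniform bound $0\le H_k^B\le 4/|B|$ for $k\ge1$. Your corrected coefficient computation is the right one: the $\Sigma^a_{y,B}$ term contributes $(4-|B|)|B|\,a(y)$, and $\Sigma^a_{B,B}+8|B|-2|B|^2=16\kappa_B$ makes the leading coefficient exactly $1$. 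Only boundedness of $\Sigma^a_{y,B}-|B|a(y)$ is needed, not an $o(1)$ refinement, so you should simply remove the interim value $c_B$ and the ``obstacle'' paragraph.
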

\begin{proof}
Fix $r\in(0,1)$, and set $f_r(k):=r^k$.  By an argument analogous to that used in the proof of Proposition \ref{thm4.1}, it suffices to establish the following limit:
  \begin{align}
  \label{thm4.2-eq3}
    \lim_{|x|\to \infty}\frac{H_{f_r}^B(x,0)}{a(x)}=1.
  \end{align}
By (\ref{Def-H0}), (\ref{Def-Ux}), and (\ref{lem3.3-eq1}), we have
\begin{align}
  H_0^B(x)=a(x)+\frac{4-|B|}{4}\cdot U_x.
\end{align}
Since $U_x,V_x\ge 0$, (\ref{Vx+Ux}) yields $0\le U_x\le 4/|B|$. Thus, we have
\begin{align}
\label{thm4.2-eq1}
  0\le H_0^B(x)-a(x)=\frac{4-|B|}{4}\cdot U_x\le \frac{4-|B|}{|B|}\qquad \text{for}\ x\in \Z^2.
\end{align}
For $k\ge 1$, (\ref{lem3.8-eqA}), (\ref{lem3.3-eq1}), {and $|\lambda^B_-|\le 1$} give
\begin{align*}
\left|H_k^B(x)-\frac{2}{|B|}\right|&=\left|\frac{\Sigma_{B,B}^a}{128\kappa_B^2|B|}\left(1-\frac{|B|}{2\kappa_B}\right)^{k-1}\Big(\Sigma_{B,B}^a+2|B|^2a(x)-2|B|\Sigma_{x,B}^a\Big)\right|\\
&\le \frac{\Sigma_{B,B}^a}{8\kappa_B|B|}\\
&=\frac{16\kappa_B-2|B|(4-|B|)}{8\kappa_B|B|}\\
&\le \frac{2}{|B|}.
  \stepcounter{equation}\tag{\theequation}
\end{align*}
Thus, we have
\begin{align}
\label{thm4.2-eq2}
  0\le H_k^B(x)\le \frac{4}{|B|}\qquad \text{for}\ k\ge 1,\ x\in \Z^2.
\end{align}
Combining (\ref{thm4.2-eq1}) and (\ref{thm4.2-eq2}), we obtain 
\begin{align}
  0\le H_{f_r}^B(x,0)-a(x)&=H_0^B(x)-a(x)+\sum_{k=1}^\infty H_k^B(x)r^k\le \frac{4-|B|}{|B|}+\frac{4r}{|B|(1-r)}.
\end{align}
This establishes (\ref{thm4.2-eq3}) and completes the proof.
\end{proof}

This completes the proof of all the assertions in Theorem \ref{thm1.3}.


\appendix
\section{Penalization by Local Time at the Origin}
\label{AppendixA}
In this appendix, we study penalization with weights of the form $\Gamma=f(L)$, where $f$ satisfies (\ref{assumption-f}). For exponential weights $f(k)=r^k$, with $r\in(0,1)$, the corresponding penalized measures can be obtained by normalizing the finite measures constructed in Section 4.1 of Najnudel--Roynette--Yor \cite{Najnudel-Roynette-Yor}; see also Subsection 4.3.5 therein for the two-dimensional simple random walk. We present the results for completeness and compare them with penalization by $L^{A\circlearrowleft}$ when $A=\{1,2,3,4\}^2$ and $N^B$ when $B=\{e_1,e_2,e_3,e_4\}$. The proofs of the assertions in this appendix are straightforward adaptations of the arguments in Section \ref{S2} and are omitted.

We first give the distribution of the local time at the geometric time $T$.

\begin{lem}
For $x\in \Z^2$, we have
\begin{align}
  \P_x(L_T=0)&=1-\frac{F_x(q)}{q},\\
  \P_x(L_T=k)&=\frac{1-F_0(q)}{q}\cdot F_x(q)F_0(q)^{k-1}\qquad \text{for}\ k\ge 1.
\end{align}
\end{lem}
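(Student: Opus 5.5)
The plan is to follow the proof of Lemma \ref{lem2.1}, with the return times $\rho_k$ to the origin playing the role of the completion times $\sigma_k$ of $A$-excursions. The key identity is
\begin{align*}
\{L_T\ge k\}=\{\rho_{k-1}'\le T\},
\end{align*}
where $\rho_{k-1}'$ denotes the time of the $k$-th visit to the origin after time $0$. Since $L_n$ counts visits at times $1,\dots,n$, the $k$-th such visit occurs at time $\tau$ composed with $k-1$ further returns. Concretely, under $\P_x$ this time equals $\tau+\tau\circ\theta_\tau+\cdots$, a sum of $k$ return times. For $x\neq0$ it is $\rho_{k-1}$. For $x=0$ it is $\rho_k$, because $\rho_0=0$ is not counted by $L$.

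First, I compute the generating function of this $k$-th visit time. By the strong Markov property at the successive visits to the origin, the increments are independent. The first increment is distributed as $\tau$ under $\P_x$, and the remaining $k-1$ increments are distributed as $\tau$ under $\P_0$. Hence
\begin{align*}
\P_x\big[q^{\text{($k$-th visit time)}}\big]=F_x(q)F_0(q)^{k-1},
\end{align*}
and this holds uniformly for $x=0$ and $x\neq0$.

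Next, I use the independence of $T$ from $S$ together with $\P(T\ge m)=q^{m-1}$, exactly as in (\ref{lem2.1-eq1}). This gives
\begin{align*}
\P_x(L_T\ge k)=\frac1q F_x(q)F_0(q)^{k-1}\qquad\text{for } k\ge1.
\end{align*}
The case $k=1$ gives $\P_x(L_T=0)=1-F_x(q)/q$. For $k\ge1$, taking differences yields
\begin{align*}
\P_x(L_T=k)=\P_x(L_T\ge k)-\P_x(L_T\ge k+1)=\frac{1-F_0(q)}{q}F_x(q)F_0(q)^{k-1}.
\end{align*}

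The only point requiring care, and the main potential source of error, is the indexing at $x=0$: the visit at time $0$ must be excluded. With that handled, the formula agrees with the $x\neq0$ case because $F_0=F_x|_{x=0}$. Alternatively, the result follows from Lemma \ref{lem2.1} with $A=\{1,2,3,4\}^2$, where $F^A=F_0$ and $\widetilde F^A=F_0$ under $\P_0$. The extra factor $F_0(q)^{-1}$ relative to that lemma reflects the shift by one, since there $k$ is indexed by completed excursions.
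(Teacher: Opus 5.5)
Your argument is correct and is exactly the adaptation of the proof of Lemma~\ref{lem2.1} that the paper has in mind and leaves to the reader: replace $\sigma_k$ by the $k$-th visit time $\tau_k$ at positive times, whose generating function under $\P_x$ is $F_x(q)F_0(q)^{k-1}$, then use $\P_x(L_T\ge k)=q^{-1}\P_x[q^{\tau_k}]$ and take differences. Only your closing aside needs correcting: under $\P_0$ one has $L^{A_\ast\circlearrowleft}=L$, so Lemma~\ref{lem2.1} with $A_\ast=\{1,2,3,4\}^2$ gives the formula at $x=0$ with no extra factor, whereas for $x\neq0$ the relation $L_n=L_n^{A_\ast\circlearrowleft}+1_{\{\tau\le n\}}$ handles $k\ge2$, but the cases $k=0,1$ additionally require $\P_x(\tau>T)=1-F_x(q)/q$.
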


For $x\in \Z^2$ and $l\in \Z_{\ge 0}$, we define
\begin{align}
h_f(x,l):=\Big(a(x)+1_{\{x=0\}}\Big)f(l)+\sum_{k=1}^\infty f(k+l).
\end{align}
In particular, $h_f$ is nonnegative, and assumption (\ref{assumption-f}) implies that
\begin{align}
  h_f(x,0)=\Big(a(x)+1_{\{x=0\}}\Big)f(0)+\sum_{k=1}^\infty f(k)>0\qquad \text{for}\ x\in \Z^2.
\end{align}

The following theorem gives the penalization limit.

\begin{thm}
\label{thmA.2}
For $x\in \Z^2$ and $l\in \Z_{\ge 0}$, we have
\begin{align}
\label{thmA.2-main}
  \lim_{q\to 1-}G_0(q)\P_x\Big[f(L_T+l)\Big]=h_f(x,l).
\end{align}
Moreover, the process
\begin{align}
  \Big(h_f(S_n,L_n)\Big)_{n\ge 0}
\end{align}
is a nonnegative $\P_x$-martingale. In addition, we have
 \begin{align}
 \label{Local-penal}
   \lim_{q\to 1-}\frac{\P_x[1_{\Lambda_n}\cdot f(L_{T})]}{\P_x[f(L_{T})]}=\P_x\left[1_{\Lambda_n}\cdot \frac{h_f(S_n,L_n)}{h_f(x,0)}\right],
 \end{align}
for every $n\ge 0,\ x\in \Z^2,$ and $\Lambda_n\in \F_n$.
\end{thm}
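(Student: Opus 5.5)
The plan is to treat $L$ as the special case $A=A_\ast=\{1,2,3,4\}^2$ of the excursion count, and then repeat the argument of Subsection \ref{S2.1} almost verbatim. Under $\P_0$ every excursion is an $A_\ast$-excursion, so $L=L^{A_\ast\circlearrowleft}$, $p_{A_\ast}=1$, $F^{A_\ast}=F_0$ and $\widetilde F^{A_\ast}=F_0$. The one difference occurs for $x\neq0$. There the initial path from $x$ to the origin is counted by $L$ (at time $\tau$), but not by $L^{A_\ast\circlearrowleft}$. Because of this, I would work directly with $L$ rather than quote Lemma \ref{lem2.1} as stated.

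\emph{Step 1 (distribution of $L_T$ and the limit \eqref{thmA.2-main}).} Write $\rho'_k$ for the $k$-th visit time to the origin after time $0$, so that $\P_x[q^{\rho'_k}]=F_x(q)F_0(q)^{k-1}$ by the strong Markov property. Exactly as in \eqref{lem2.1-eq1}, $\P_x(L_T\ge k)=q^{-1}F_x(q)F_0(q)^{k-1}$, which gives the stated lemma. Hence
\begin{align*}
G_0(q)\P_x[f(L_T+l)]&=G_0(q)\Big(1-\tfrac{F_x(q)}{q}\Big)f(l)\\
&\quad+\frac{(1-F_0(q))G_0(q)}{q}F_x(q)\sum_{k\ge1}F_0(q)^{k-1}f(k+l).
\end{align*}
By \eqref{G_0F_0-identity}, the prefactor $(1-F_0)G_0$ equals $1$. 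Since $F_x(q)\to1$, dominated convergence sends the second term to $\sum_{k\ge1}f(k+l)$.

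For the first term, I would write $G_0(1-F_x/q)=G_0-G_x/q$ using \eqref{GxFx-identity} for $x\ne0$ (and $F_0=1-1/G_0$ for $x=0$). Then
\[
G_0-\frac{G_x}{q}=(G_0-G_x)-\frac{(1-q)G_x}{q}\to a(x)
\]
by \eqref{a-definition} and \eqref{G_0(q)sim}. For $x=0$ the same computation gives
\[
G_0-\frac{G_0-1}{q}\to 1,
\]
and this is exactly the source of the term $1_{\{x=0\}}$ in $h_f$. Together these give \eqref{thmA.2-main}.

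\emph{Step 2 (martingale property).} I would copy the proof of Lemma \ref{h-martingale-lem}: condition on the first step, use the Markov property of $(S,L)$ and the fact that $(S_1,L_1)$ takes finitely many values, and let $q\to1-$ using $(1-q)G_0(q)\to0$. This shows that $h_f$ is harmonic for $(S,L)$. Nonnegativity is clear from the definition of $h_f$.

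\emph{Step 3 (penalization limit).} I would follow Proposition \ref{thm2.5} verbatim. The terms with $m\le n$ vanish after multiplication by $G_0(q)$. The tail $m>n$ equals $q^n\P_x[1_{\Lambda_n}G_0(q)\P_{S_n}[f(L_T+l)]_{l=L_n}]$, which converges to $\P_x[1_{\Lambda_n}h_f(S_n,L_n)]$ by Step 1. Dividing by the $\Lambda_n=\Omega$ case, which is legitimate because $h_f(x,0)>0$, gives \eqref{Local-penal}.

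The only real obstacle is bookkeeping the first term at $x=0$ versus $x\neq0$. That distinction is what produces $a(x)+1_{\{x=0\}}$ in place of the expression $p_Aa(x)+1$ that appears in the Section \ref{S2} analogue.
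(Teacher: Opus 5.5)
Your proposal is correct, and it is exactly the adaptation of Lemma \ref{lem2.1}, Lemma \ref{h-martingale-lem} and Proposition \ref{thm2.5} that the paper has in mind; the paper omits this proof as a straightforward modification of Section \ref{S2}. In particular, you treat the first term correctly, via $G_0-G_x/q\to a(x)$ for $x\neq0$ and $G_0-(G_0-1)/q\to1$ for $x=0$, and this accounts properly for the $1_{\{x=0\}}$ correction that arises because $L$, unlike $L^{A_\ast\circlearrowleft}$, counts the initial hit of the origin.
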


As in Subsection \ref{S2.2}, the martingale $h_f(S,L)$ defines a probability measure $\Q_x^f$ on $\F_\infty$ by
\begin{align}
\label{def-Qf}
  \frac{d\Q_x^f}{d\P_x}\Big|_{\F_n}=\frac{h_f(S_n,L_n)}{h_f(x,0)}.
\end{align}
We define
\begin{align}
  w_f^{\{0\}}(x):=\frac{(a(x)+1_{\{x=0\}})f(0)}{h_f(x,0)},\qquad w_f^{\{k\}}(x):=\frac{f(k)}{h_f(x,0)}\ \text{for}\ k\ge 1.
\end{align}
Set $\tau_0:=0$. For $k\ge 1$, let $\tau_k$ denote the time of the $k$-th visit to the origin at a strictly positive time:
\begin{align}
  \tau_1:=\tau,\qquad \tau_{k+1}:=\inf\Big\{n>\tau_k:\ S_n=0\Big\}.
\end{align}

Under $\Q_x^f$, the total local time $L_\infty:=\lim_{n\to\infty}L_n$ is almost surely finite. Let $k\ge 1$ satisfy $f(k)>0$. Conditional on $L_\infty=k$, the path up to $\tau_k$ has the same law as the original simple random walk started at $x$ and stopped at $\tau_k$. Under this conditional law, the post-$\tau_k$ process is independent of the stopped path. At time $\tau_k+1$, the walk moves from the origin to a site chosen uniformly from $\{v_1,v_2,v_3,v_4\}$. Conditional on this site being $v_i$, the process from time $\tau_k+1$ onward has law $\P_{v_i}^{\updownarrow}$.
If $f(0)>0$, the conditional law given $L_\infty=0$ is described as follows. When $x\neq 0$, the entire path has law $\P_x^{\updownarrow}$. When $x=0$, the walk first moves to a site chosen uniformly from $\{v_1,v_2,v_3,v_4\}$. Conditional on this site being $v_i$, the process from time $1$ onward has law $\P_{v_i}^{\updownarrow}$. More precisely, we have
the following theorem.

\begin{thm}
\label{thmA.3}
For $k\ge 1,\ i\in \{1,2,3,4\}$, and $x\in \Z^2$, we have
 \begin{align}
   \Q_x^{f}\Big[G\cdot H\circ \theta_{\tau_{k}+1},\  L_\infty=k,\ S_{\tau_k+1}=v_i\Big]=\frac{w_f^{\{k\}}(x)}{4}\P_x[G]\cdot\P_{v_i}^\updownarrow[H],
 \end{align}
where $G$ denotes a bounded $\F_{\tau_k}$-measurable random variable and $H$ denotes a bounded $\F_\infty$-measurable random variable. For $x\neq 0$, we have
 \begin{align}
   \Q_x^f\Big[H,\  L_\infty=0\Big]&=w_f^{\{0\}}(x)\P_x^\updownarrow[H].
 \end{align}
 Moreover, for $i\in \{1,2,3,4\}$, we have
 \begin{align}
     \Q_0^f\Big[H\circ \theta_1,\  L_\infty=0,\ S_1=v_i\Big]&=\frac{w_f^{\{0\}}(0)}{4}\P_{v_i}^\updownarrow[H].
 \end{align}
Here, $H$ denotes a bounded $\F_\infty$-measurable random variable.
\end{thm}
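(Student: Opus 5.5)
The plan is to reduce Theorem \ref{thmA.3} to the machinery of Subsection \ref{S2.2} through the case $A=A_\ast=\{1,2,3,4\}^2$. Under $\P_x$ every excursion is an $A_\ast$-excursion, so $L^{A_\ast\circlearrowleft}_n$ and $L_n$ coincide, except that $L_n$ also counts the first hitting of $0$ when $x\neq0$. Rather than track this bookkeeping, I would redo the argument directly with $L$. Write $h_f(S_n,L_n)=\sum_{k\ge0}f(k)Z_n^{\{k\}}$, where
\begin{align*}
Z_n^{\{k\}}:=1_{\{L_n<k\}}+\Big(a(S_n)+1_{\{S_n=0\}}\Big)1_{\{L_n=k\}}.
\end{align*}
Each $Z^{\{k\}}$ is a nonnegative $\P_x$-martingale, being $h_{f^{(k)}}(S_n,L_n)$ with $f^{(k)}=1_{\{\cdot=k\}}$ as in Theorem \ref{thmA.2}. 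Normalizing gives measures $\Q_x^{\{k\}}$, and, exactly as in Lemma \ref{lem-2.6}, Lemma \ref{lem2.7} and Corollary \ref{cor-2.8}, we get
\begin{align*}
\Q_x^f=\sum_k w_f^{\{k\}}(x)\,\Q_x^{\{k\}},\qquad \Q_x^{\{k\}}(L_\infty=k)=1.
\end{align*}
Recurrence of $S$ under $\P_x$ gives $\P_x(L_n<k)\to0$, and the indicator kills $\{L_n\ge k+1\}$. Hence
\begin{align*}
\Q_x^f[\,\cdot\,,L_\infty=k]=w_f^{\{k\}}(x)\,\Q_x^{\{k\}}[\,\cdot\,].
\end{align*}

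\textbf{Case $k\ge1$.} Take $G$ to be $\F_{\tau_k}$-measurable and $H_m$ to be $\F_m$-measurable. Since $\tau_k<\infty$ and $\tau_k\le n$ on $\{L_n\ge k\}$, we can sum over $\{\tau_k=n\}$ and use $Z^{\{k\}}_{n+1+m}$ as the density. On $\{\tau_k=n\}$ the requirement $L_{n+1+m}=k$ means no return to $0$ in $(n,n+1+m]$, and then $S_{n+1+m}\neq0$. Applying the Markov property at $n$ and then at $n+1$ gives
\begin{align*}
\P_x[G,\tau_k=n]\cdot\frac14\,\P_{v_i}\Big[H_m\,a(S_m)1_{\{L_m=0\}}\Big].
\end{align*}
Because $a(v_i)=1$ by (\ref{avi=1}), the last expectation equals $\P_{v_i}^{\updownarrow}[H_m]$, as in the proof of Proposition \ref{thm2.11}. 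Summing over $n$ yields $\frac14\P_x[G]\P_{v_i}^\updownarrow[H_m]$, and a monotone class argument in $H$ finishes this case.

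\textbf{Case $k=0$, $x\neq0$.} For $n\ge1$ the density of $\Q_x^{\{0\}}$ on $\F_n$ is
\begin{align*}
\frac{a(S_n)1_{\{L_n=0\}}}{a(x)}=\frac{a(S_n)}{a(x)}1_{\{n<\tau\}}.
\end{align*}
This is precisely the density defining $\P_x^\updownarrow$, so $\Q_x^{\{0\}}=\P_x^\updownarrow$.

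\textbf{Case $k=0$, $x=0$.} Here $a(0)+1=1$, so the density on $\F_{m+1}$ is $a(S_{m+1})1_{\{L_{m+1}=0\}}$. Conditioning on $S_1=v_i$ and shifting by one step gives
\begin{align*}
\Q_0^{\{0\}}[H_m\circ\theta_1,S_1=v_i]=\frac14\P_{v_i}^\updownarrow[H_m],
\end{align*}
verbatim as in Proposition \ref{thm2.11}, and monotone class extends this to all bounded $H$.

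The main point needing care is the $k\ge1$ step. One must check that the density evaluated at time $\tau_k+1+m$, rather than at $\tau_k+m$, correctly factorizes: the first step out of $0$ is uniform, and the remaining factor is exactly the $\P^\updownarrow_{v_i}$ density with normalization $a(v_i)=1$. One must also justify summing over $n$ without integrability problems. The latter is harmless here, since each $\{\tau_k=n\}$ is an $\F_n$-event and all quantities involved are bounded.
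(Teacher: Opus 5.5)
Your proposal is correct and follows the route the paper intends. The paper omits this proof as a straightforward adaptation of Section~\ref{S2}: the mixture decomposition of Lemmas~\ref{lem-2.6} and \ref{lem2.7} and Corollary~\ref{cor-2.8}, followed by the Markov-property computations of Propositions~\ref{thm2.9}, \ref{thm2.11} and \ref{thm2.12}. You carry out exactly this directly with $L$, merging the post-$\tau_k$ step into a single evaluation of the density at time $\tau_k+1+m$ and using $a(v_i)=1$.
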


We now compare local-time penalization with penalization by $L^{A\circlearrowleft}$ in the case $A=A_\ast:=\{1,2,3,4\}^2$. In this case, $L^{A_\ast\circlearrowleft}$ counts all completed excursions from the origin. Thus, for every $n\ge 0$, we have
\begin{align}
\label{LA=L}
  L_n^{A_\ast\circlearrowleft}= L_n\qquad \P_0\text{-a.s.},
  \end{align}
  whereas, for $x\neq 0$,
  \begin{align}
  \label{LA+1=L}
  L_n^{A_\ast\circlearrowleft}+1_{\{\tau\le n\}}= L_n\qquad \P_x\text{-a.s.}
\end{align}
Moreover, we have
\begin{align}
  p_{A_\ast}=1,\qquad r^{A_\ast}(x,v)=\begin{cases}
    1&\text{for}\ x\neq 0,\ v=v_i,\\[10pt]
    0&\text{for}\ v=0.
  \end{cases}
\end{align}
Thus, we obtain
\begin{align}
  h_f^{A_\ast}(x,l,v)=\begin{cases}
    h_f(x,l)&\text{for}\ x\neq 0,\ v=v_i\ \text{or}\ x=0,\ v=0,\\[10pt]
    h_f(x,l)+f(l)&\text{for}\ x\neq 0,\ v=0.
  \end{cases}
\end{align}
These identities yield the following comparison of the penalized measures.

\begin{prop}
We have
  \begin{align}
  \label{propA4-eq1}
  \Q_0^{f,A_\ast\circlearrowleft}=\Q_0^f.
\end{align}
Define $\widehat{f}:=(f(0),f(0),f(1),f(2),...)$. Fix $x\neq 0$. Then we have
\begin{align}
\label{propA4-eq2}
\Q_x^{f,A_\ast\circlearrowleft}=\Q_x^{\widehat{f}}.
\end{align}
Moreover, if $f(0)>0$, then
\begin{align}
\label{propA4-eq3}
  \Q_x^{f,A_\ast\circlearrowleft}\Big(\cdot\mid L_\infty^{A_\ast\circlearrowleft}=0\Big)&=\frac{a(x)}{1+a(x)}\Q_x^{\widehat{f}}\Big(\cdot\mid L_\infty=0\Big)+\frac{1}{1+a(x)}\Q_x^{\widehat{f}}\Big(\cdot\mid L_\infty=1\Big).
  \end{align} 
For every $k\ge1$ such that $f(k)>0$, we have
  \begin{align}
  \label{propA4-eq4}
  \Q_x^{f,A_\ast\circlearrowleft}\Big(\cdot\mid L_\infty^{A_\ast\circlearrowleft}=k\Big)&=\Q_x^{\widehat{f}}\Big(\cdot\mid L_\infty=k+1\Big).
\end{align}
\end{prop}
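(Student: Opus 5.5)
The plan is to compare the densities on each $\F_n$ and then pass to $\F_\infty$. All four statements compare two measures defined through their densities with respect to $\P_x$ on $\F_n$. So it suffices to show that the density martingales agree, or agree after the appropriate reweighting. This reduces everything to identities between $h_f^{A_\ast}(S_n,L_n^{A_\ast\circlearrowleft},V_n)$ and $h_{\widehat f}(S_n,L_n)$ (respectively $h_f(S_n,L_n)$), using the displayed formula for $h_f^{A_\ast}$ together with (\ref{LA=L}) and (\ref{LA+1=L}).

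For (\ref{propA4-eq1}) I would start under $\P_0$. There $V_n=0$ exactly when $S_n=0$, so $(S_n,V_n)$ is either $(0,0)$ or $(x,v_i)$ with $x\neq0$. In both cases $h_f^{A_\ast}=h_f$, and $L_n^{A_\ast\circlearrowleft}=L_n$. Hence the two density processes coincide and the measures agree on every $\F_n$.

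For (\ref{propA4-eq2}), fix $x\neq0$ and check three cases.
\begin{itemize}
\item On $\{n<\tau\}$ we have $V_n=0$, $L_n^{A_\ast\circlearrowleft}=0=L_n$ and $S_n\neq0$. Then $h_f^{A_\ast}=h_f(S_n,0)+f(0)=a(S_n)f(0)+\sum_{k\ge0}f(k)$. On the other side, $h_{\widehat f}(S_n,0)=a(S_n)\widehat f(0)+\sum_{k\ge1}\widehat f(k)=a(S_n)f(0)+\sum_{k\ge0}f(k)$, which is the same.
\item On $\{\tau\le n\}$ with $S_n=0$, we have $L_n=L_n^{A_\ast\circlearrowleft}+1=:l+1$ and $\widehat f(m+1)=f(m)$. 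Then $h_{\widehat f}(0,l+1)=\widehat f(l+1)+\sum_{k\ge1}\widehat f(k+l+1)=f(l)+\sum_{k\ge1}f(k+l)=h_f(0,l)$.
\item On $\{\tau\le n\}$ with $S_n\neq0$, $V_n=v_i$, the same computation with $a(S_n)$ in place of the indicator term gives $h_{\widehat f}(S_n,l+1)=h_f(S_n,l)=h_f^{A_\ast}(S_n,l,v_i)$.
\end{itemize}
Thus the densities agree, including the normalizations at time $0$.

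For (\ref{propA4-eq3}) and (\ref{propA4-eq4}), the identity (\ref{LA+1=L}) passed to $n=\infty$ gives $L_\infty=L_\infty^{A_\ast\circlearrowleft}+1_{\{\tau<\infty\}}$. It follows that $\{L_\infty^{A_\ast\circlearrowleft}=k\}=\{L_\infty=k+1\}$ for $k\ge1$, since then $\tau<\infty$ necessarily. Also $\{L_\infty^{A_\ast\circlearrowleft}=0\}=\{L_\infty=0\}\cup\{L_\infty=1\}$. Combined with (\ref{propA4-eq2}), this immediately gives (\ref{propA4-eq4}). For (\ref{propA4-eq3}) I would decompose the conditional law as a mixture over these two disjoint events, with weights $\Q_x^{\widehat f}(L_\infty=j)/\Q_x^{\widehat f}(L_\infty\le1)$ for $j=0,1$. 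By the analogue of Corollary \ref{cor-2.8} for $\Q^{\widehat f}$, namely $w^{\{0\}}_{\widehat f}(x)$ and $w^{\{1\}}_{\widehat f}(x)$, these weights are proportional to $a(x)\widehat f(0)=a(x)f(0)$ and $\widehat f(1)=f(0)$. This yields the weights $a(x)/(1+a(x))$ and $1/(1+a(x))$.

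The main obstacle is the bookkeeping in the case analysis for (\ref{propA4-eq2}): the index shift in $\widehat f$ has to exactly absorb the extra visit at $\tau$ and the extra $f(l)$ term when $v=0$. Everything else is formal.
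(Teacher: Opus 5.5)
Your argument is correct. For (\ref{propA4-eq3}) and (\ref{propA4-eq4}) it is the paper's argument: you use the event identities $\{L_\infty^{A_\ast\circlearrowleft}=k\}=\{L_\infty=k+1\}$ for $k\ge1$ and $\{L_\infty^{A_\ast\circlearrowleft}=0\}=\{L_\infty=0\}\cup\{L_\infty=1\}$. You combine them with $\Q_x^{\widehat f}(L_\infty=0)=a(x)f(0)/h_{\widehat f}(x,0)$ and $\Q_x^{\widehat f}(L_\infty=1)=f(0)/h_{\widehat f}(x,0)$. The paper reads these two values off Theorem \ref{thmA.3} with trivial test functions; this is the ``analogue of Corollary \ref{cor-2.8}'' you invoke.

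The difference lies in (\ref{propA4-eq2}); the paper simply attributes (\ref{propA4-eq1}) to (\ref{LA=L}). For (\ref{propA4-eq2}) the paper does not compare $h$-functions. It observes that $f(L_T^{A_\ast\circlearrowleft})=\widehat f(L_T)$ holds pathwise under $\P_x$, so the prelimit ratios in (\ref{penal-limit}) and (\ref{Local-penal}) coincide for every $q$. Theorems \ref{thm1.1} and \ref{thmA.2} then force the two limiting measures to agree on each $\F_n$.

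You instead prove the stronger pathwise identity $h_f^{A_\ast}(S_n,L_n^{A_\ast\circlearrowleft},V_n)=h_{\widehat f}(S_n,L_n)$ of the density martingales, normalisation at $n=0$ included. You do this by splitting into the cases $\{n<\tau\}$, $\{\tau\le n,\ S_n=0\}$ and $\{\tau\le n,\ S_n\neq0\}$.

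The paper's route is shorter and explains why the identity has to hold, but it rests on Theorem \ref{thmA.2}, whose proof is omitted. Yours needs only the definitions of the two measures and the explicit formula for $h_f^{A_\ast}$, and it doubles as a check of that formula.

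One detail is worth stating in (\ref{propA4-eq4}): the conditioning event has positive probability. Indeed, Corollary \ref{cor-2.8} gives $\Q_x^{f,A_\ast\circlearrowleft}(L_\infty^{A_\ast\circlearrowleft}=k)=f(k)/h_f^{A_\ast}(x,0,0)>0$.
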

\begin{proof}
Equation (\ref{propA4-eq1}) follows directly from (\ref{LA=L}). 

Let $x\neq 0.$ By (\ref{LA+1=L}), we have
\begin{align}
  f(L_n^{A_\ast \circlearrowleft})=\widehat{f}(L_n)\qquad \P_x\text{-a.s.}
\end{align}
The same identity holds with $n$ replaced by $T$. Comparing the penalization limits in Theorems \ref{thm1.1} and \ref{thmA.2} therefore gives (\ref{propA4-eq2}).

To prove (\ref{propA4-eq3}), assume that $f(0)>0$. By Theorem \ref{thmA.3} applied to $\widehat{f}$, we have
\begin{align}
  \Q_x^{\widehat{f}}(L_\infty=0)&=\frac{a(x)f(0)}{h_{\widehat{f}}(x,0)},\\
  \Q_x^{\widehat{f}}(L_\infty=1)&=\frac{f(0)}{h_{\widehat{f}}(x,0)}.
\end{align}
By (\ref{cor2.8-eq1}), we have
\begin{align*}
  \Q_x^{f,A_\ast\circlearrowleft}(L_\infty^{A_\ast\circlearrowleft}=0)&=\frac{(1+a(x))f(0)}{h_f^{A_\ast}(x,0,0)}=\frac{(1+a(x))f(0)}{h_{\widehat{f}}(x,0)}.
  \stepcounter{equation}\tag{\theequation} 
\end{align*}
The identity
\begin{align}
  \{L_\infty^{A_\ast \circlearrowleft}=0\}=\{L_\infty=0\}\cup \{L_\infty=1\}
\end{align}
holds up to a $\Q_x^{f,A_\ast\circlearrowleft}$-null set. Thus, we have
\begin{align*}
  \Q_x^{f,A_\ast\circlearrowleft}\Big[H\mid L_\infty^{A_\ast\circlearrowleft}=0\Big]&=\frac{\Q_x^{f,A_\ast\circlearrowleft}[H,\  L_\infty^{A_\ast\circlearrowleft}=0]}{\Q_x^{f,A_\ast\circlearrowleft}(L_\infty^{A_\ast\circlearrowleft}=0)}\\
  &=\frac{h_{\widehat{f}}(x,0)}{(1+a(x))f(0)}\cdot \left(\Q_x^{\widehat{f}}[H,\  L_\infty=0]+\Q_x^{\widehat{f}}[H,\  L_\infty=1]\right)\\
  &=\frac{a(x)}{1+a(x)}\Q_x^{\widehat{f}}\Big[H\mid L_\infty=0\Big]+\frac{1}{1+a(x)}\Q_x^{\widehat{f}}\Big[H\mid L_\infty=1\Big]
    \stepcounter{equation}\tag{\theequation} 
\end{align*}
for every bounded $\F_\infty$-measurable random variable $H$. This proves (\ref{propA4-eq3}).

Finally, let $k\ge 1$ satisfy $f(k)>0$. Equation (\ref{propA4-eq4}) follows from the identity
\begin{align}
  \{L_\infty^{A_\ast \circlearrowleft}=k\}=\{L_\infty=k+1\}\qquad \text{for}\ k\ge 1,
\end{align}
which holds up to a $\Q_x^{f,A_\ast\circlearrowleft}$-null set. This completes the proof.
\end{proof}

We now compare local-time penalization with penalization by $N^B$ in the case $B=B_\ast:=\{e_1,e_2,e_3,e_4\}$. In this case, $N^{B_\ast}$ counts all crossings of the edges incident to the origin. Thus, for every $n\ge 0$, we have
\begin{align}
\label{NBast-L}
  N_n^{B_\ast}=2L_n+1_{\{S_n\neq 0\}}\qquad \P_0\text{-a.s.},
 \end{align}
 whereas, for $x\neq 0$,
 \begin{align}
 \label{NBast-L2}
   N_n^{B_\ast}=2L_n-1_{\{S_n= 0\}}\qquad \P_x\text{-a.s.}
 \end{align}
Moreover, by (\ref{lem3.8-eq2}), (\ref{lem3.8-eq3}), and (\ref{lem3.8-eq4}), we have
\begin{align}
\label{HfBast}
  H_f^{B_\ast}(x,l)=\begin{cases}
    \displaystyle \sum_{k=0}^\infty f(2k+1+l)& \text{for}\ x=0,\\[10pt]
    \displaystyle a(x)f(l)+\sum_{k=1}^\infty f(2k+l)&\text{for}\ x\neq 0.
  \end{cases}
\end{align}
We define 
  \begin{align}
  g_x(k):=\begin{cases}
    f(2k+1)&\text{for}\ x=0,\\[10pt]
    f(2k)&\text{for}\ x\neq 0.
  \end{cases}
\end{align}

These identities yield the following comparison of the penalized measures.

\begin{prop}
Let $x\in\Z^2$ satisfy $H_f^{B_\ast}(x,0)>0$. We have
  \begin{align}
  \label{PropA5-eq1}
\Q_x^{f,B_\ast\leadsto}=\Q_x^{g_x}.
\end{align}
Moreover, if $x=0$, then for every $k\ge0$ such that $f(2k+1)>0$, we have
\begin{align}
  \label{PropA5-eq2}
  \Q_0^{f,B_\ast\leadsto}\Big(\cdot\mid N_\infty^{B_\ast}=2k+1\Big)=\Q_0^{g_0}\Big(\cdot\mid L_\infty=k\Big).
\end{align}
If $x\neq 0$, then for every $k\ge0$ such that $f(2k)>0$, we have
\begin{align}
  \label{PropA5-eq3}
  \Q_x^{f,B_\ast\leadsto}\Big(\cdot\mid N_\infty^{B_\ast}=2k\Big)=\Q_x^{g_x}\Big(\cdot \mid L_\infty =k\Big).
\end{align}
\end{prop}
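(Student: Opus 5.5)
The plan is to follow the proof of (\ref{propA4-eq2}): identify the weights pathwise, and then compare the penalization limits. Fix $x$ with $H_f^{B_\ast}(x,0)>0$. By (\ref{NBast-L}) and (\ref{NBast-L2}), at a fixed time $n$ the count $N_n^{B_\ast}$ is determined by $L_n$ and by whether $S_n=0$. This identity holds at every deterministic time, so it also holds at the independent time $T$. Hence
\begin{align*}
f(N_T^{B_\ast})=\begin{cases} g_0(L_T)\,1_{\{S_T\neq 0\}}+f(2L_T)1_{\{S_T=0\}} & \P_0\text{-a.s.},\\ g_x(L_T)\,1_{\{S_T\neq0\}}+f(2L_T-1)1_{\{S_T=0\}} & \P_x\text{-a.s.},\ x\neq 0.\end{cases}
\end{align*}
The main term is $g_x(L_T)$. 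The key step is to show that the correction terms on $\{S_T=0\}$ are negligible after multiplying by $G_0(q)$. For this I use
\begin{align*}
\P_x(S_T=0,\ L_T=k)\le \P_x(S_T=0)=\frac{p}{q}\bigl(G_x(q)-1_{\{x=0\}}\bigr).
\end{align*}
Multiplying by $G_0(q)$ and using $G_x\le G_0$ gives a bound of order $(1-q)G_0(q)^2\sim(1-q)\pi^{-2}\log^2(1-q)^{-1}\to0$. Since $f$ is bounded and summable, this controls $G_0(q)\P_x[f(N_T^{B_\ast})]-G_0(q)\P_x[g_x(L_T)]$, and likewise the same difference with $1_{\Lambda_n}$ inserted.

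With this in hand, Proposition \ref{thm3.7} and Theorem \ref{thmA.2} (applied to $g_x$, which is summable) give the same limiting ratio. Therefore
\begin{align*}
\Q_x^{f,B_\ast\leadsto}\big|_{\F_n}=\Q_x^{g_x}\big|_{\F_n}\quad\text{for all } n,
\end{align*}
and (\ref{PropA5-eq1}) follows. There is a consistency check on the normalizations: by (\ref{HfBast}), $H_f^{B_\ast}(x,0)=h_{g_x}(x,0)$ for both $x=0$ and $x\neq0$, because $a(0)+1=1$. In particular $h_{g_x}(x,0)>0$ precisely when $H_f^{B_\ast}(x,0)>0$. Note that $\Q_x^{g_x}$ is defined through the martingale $h_{g_x}(S_n,L_n)$, whose spatial argument uses the current state. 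Equality of the limits on each $\F_n$ with $x$ fixed is all that is needed, so there is no need to match $H_f^{B_\ast}(S_n,N_n^{B_\ast})$ with $h_{g_x}(S_n,L_n)$ pointwise.

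For the conditional statements, both measures make $S$ transient: the path decompositions (Proposition \ref{3.13} and Theorem \ref{thmA.3}) send the tail to $\P^\updownarrow_{v}$. Consequently $S_n\neq0$ eventually, almost surely. Letting $n\to\infty$ in (\ref{NBast-L}) and (\ref{NBast-L2}) then gives, up to null sets,
\begin{align*}
N_\infty^{B_\ast}&=2L_\infty+1 \quad\text{under } \Q_0^{f,B_\ast\leadsto},\\
N_\infty^{B_\ast}&=2L_\infty \quad\text{under } \Q_x^{f,B_\ast\leadsto},\ x\neq0.
\end{align*}
The relation at each finite time holds $\P_x$-a.s., and $\Q_x^{f,B_\ast\leadsto}$ is absolutely continuous with respect to $\P_x$ on $\F_n$, so the relation transfers. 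Thus $\{N_\infty^{B_\ast}=2k+1\}=\{L_\infty=k\}$ when $x=0$, and $\{N_\infty^{B_\ast}=2k\}=\{L_\infty=k\}$ when $x\ne0$. Combining these event identities with (\ref{PropA5-eq1}) gives (\ref{PropA5-eq2}) and (\ref{PropA5-eq3}). The conditioning is legitimate because these events have positive probability: by Theorem \ref{thmA.3}, $\Q_x^{g_x}(L_\infty=k)=w^{\{k\}}_{g_x}(x)>0$ when $g_x(k)>0$, and also for $k=0$, $x\neq0$, since $a(x)>0$.

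The main obstacle is the negligibility step on $\{S_T=0\}$. Once that is established, the rest is bookkeeping with the earlier theorems.
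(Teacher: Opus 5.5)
Your argument is correct, but for (\ref{PropA5-eq1}) it takes a different route from the paper.

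The paper never returns to the geometric clock. It checks directly that the density processes coincide, $H_f^{B_\ast}(S_n,N_n^{B_\ast})=h_{g_x}(S_n,L_n)$ $\P_x$-a.s.\ for every $n$, together with $H_f^{B_\ast}(x,0)=h_{g_x}(x,0)$. This is a four-case check (whether $x=0$, whether $S_n=0$) using (\ref{NBast-L}), (\ref{NBast-L2}) and (\ref{HfBast}). For example, for $x\neq0$ and $S_n=0$ one has $N_n^{B_\ast}=2L_n-1$ and $H_f^{B_\ast}(0,2L_n-1)=\sum_{k\ge0}f(2k+2L_n)=h_{g_x}(0,L_n)$.

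You instead show that the weights $f(N_T^{B_\ast})$ and $g_x(L_T)$ differ only on $\{S_T=0\}$, and that $G_0(q)\P_x(S_T=0)\le\frac{1-q}{q}G_0(q)^2\to0$. Here $G_x\le G_0$ holds because $G_x=F_xG_0$ by (\ref{GxFx-identity}), not termwise, since $\P_x(S_n=0)>0=\P_0(S_n=0)$ can occur for odd $n$. You then compare the limits in Proposition \ref{thm3.7} and Theorem \ref{thmA.2}.

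\textbf{What each route buys.} Yours establishes something at the pre-limit level, namely $G_0(q)\P_x[|f(N_T^{B_\ast})-g_x(L_T)|]\to0$: the two penalization problems are already asymptotically indistinguishable before the limit is taken. The paper's route is more economical. It is purely algebraic, needs no asymptotics, and does not lean on the limit assertion of Theorem \ref{thmA.2}, whose proof the paper omits. Equality of the two probability measures on every $\F_n$ is equivalent to the pointwise identity of the densities. So your remark that no pointwise matching is needed is true, but the matching is actually the shorter path.

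For (\ref{PropA5-eq2}) and (\ref{PropA5-eq3}) you argue as the paper does: finiteness of $L_\infty$ from Theorem \ref{thmA.3}, then letting $n\to\infty$ in (\ref{NBast-L}) and (\ref{NBast-L2}). Your check that the conditioning events have positive probability supplies a detail the paper leaves implicit.
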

\begin{proof}
  By (\ref{NBast-L}), (\ref{NBast-L2}), and (\ref{HfBast}), we have
  \begin{align}
    H_f^{B_\ast}(S_n,N_n^{B_\ast})&=h_{g_x}(S_n,L_n)\qquad \P_x\text{-a.s.},\\
    H_f^{B_\ast}(x,0)&=h_{g_x}(x,0).
  \end{align}
  This proves (\ref{PropA5-eq1}).

Next, assume that $x=0$. By Theorem \ref{thmA.3}, we have $L_\infty<\infty$,\ $\Q_0^{g_0}$-a.s. Thus, we have
\begin{align}
  1_{\{S_n\neq 0\}}\to 1,\qquad \Q_x^{g_0}\text{-a.s.}
\end{align}
Equation (\ref{NBast-L}) also holds under $\Q_0^{g_0}$, and letting $n\to\infty$, we obtain
\begin{align}
  N_\infty^{B_\ast}=2L_\infty +1,\qquad \Q_0^{g_0}\text{-a.s.}
\end{align}
Combining (\ref{PropA5-eq1}) with the above observation, we obtain (\ref{PropA5-eq2}).

Finally, (\ref{PropA5-eq3}) follows in the same manner. This completes the proof.
\end{proof}


\section{Penalization with a Constant Clock}
\label{AppendixC}
In this appendix, we show that the penalization limits for the constant and geometric clocks coincide when $f$ is nonincreasing or has finite support.

We first establish the following lemma.

\begin{lem}
\label{lemC.1}
Fix $x\in \Z^2$. Let $(F_m)_{m\ge 0}$ be a sequence of nonnegative, $\F_\infty$-measurable random variables that are integrable under $\P_x$. Assume that $\P_x[F_{m+1}]\le \P_x[F_m]$ for all $m\ge 0$ and that there exists a constant $C_x\ge 0$ such that
  \begin{align}
    \lim_{q\to 1-}G_0(q)\P_x[F_T]=C_x.
  \end{align}
  Then we have
  \begin{align}
  \label{LemC.1-eq1}
    \lim_{m\to \infty}\frac{1}{\pi}\log m\cdot \P_x[F_m]=C_x.
  \end{align}
\end{lem}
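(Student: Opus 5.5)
We need a Tauberian argument. Set $a_m:=\P_x[F_m]$. Since $T$ is independent of the walk with $\P(T=m)=pq^{m-1}$, we have
\begin{align*}
G_0(q)\P_x[F_T]=(1-q)G_0(q)\sum_{m=1}^\infty a_m q^{m-1}.
\end{align*}
Combined with (\ref{G-log(1-q)}), the hypothesis becomes
\begin{align*}
(1-q)\sum_{m\ge1}a_mq^{m-1}\sim \frac{\pi C_x}{\log\frac{1}{1-q}},
\end{align*}
so the Abel mean of $(a_m)$ decays like a slowly varying function $\ell(1/(1-q))$ with $\ell(t)=\pi C_x/\log t$. Since $a_m$ is nonincreasing and nonnegative, I will use Karamata's Tauberian theorem together with the monotone density argument. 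The target is $a_m\sim \pi C_x/\log m$, which is exactly (\ref{LemC.1-eq1}).

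\textbf{Partial sums.} Let $A(n):=\sum_{m=1}^n a_m$. Then
\begin{align*}
\sum_m a_m q^{m-1}=\frac1q\int_0^\infty e^{-s\,t}\,dA(t)
\end{align*}
with $e^{-s}=q$, and $s\sim 1-q$. Assume first $C_x>0$. The Laplace transform of $dA$ is then asymptotic to $s^{-1}\ell(1/s)$, and Karamata's theorem with index $\rho=1$ gives
\begin{align*}
A(n)\sim n\,\ell(n)=\frac{\pi C_x\, n}{\log n}.
\end{align*}

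\textbf{From sums to terms.} This step uses monotonicity. For $\lambda>1$, monotonicity gives
\begin{align*}
a_n\ge \frac{A(\lambda n)-A(n)}{(\lambda-1)n}\quad\text{and}\quad a_n\le \frac{A(n)-A(n/\lambda)}{(1-1/\lambda)n}.
\end{align*}
Because $\log(\lambda n)\sim\log n$, inserting the asymptotics of $A$ shows that both bounds are asymptotically $\pi C_x/\log n$. Letting $\lambda\downarrow1$ is not even needed here, since the ratio $(\lambda n-n)/((\lambda-1)n)=1$ exactly.

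\textbf{The case $C_x=0$.} Only an upper bound is required. Since $a_n\ge0$ is nonincreasing,
\begin{align*}
\sum_m a_mq^{m-1}\ge a_n\sum_{m\le n}q^{m-1}.
\end{align*}
Choose $q=1-1/n$. The right-hand side is then at least $c\,n\,a_n$ with $c=1-e^{-1}$, while the left-hand side is $o\!\left(n/\log n\right)$. Hence $a_n\log n\to0$. The same elementary choice $q=1-1/n$ also gives a direct, self-contained proof of the upper bound $\limsup a_n\log n\le \pi C_x/c$ in general, but this constant is not sharp, so the sharp constant needs Karamata.

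\textbf{Main obstacle.} The delicate point is the exact constant. Karamata's theorem requires the precise slowly varying form of the Laplace transform. It also requires checking that replacing $q$ by $e^{-s}$ and $1/(1-q)$ by $1/s$ changes the slowly varying factor only by $1+o(1)$, which holds because $\log$ is slowly varying.
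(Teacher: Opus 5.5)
Your proof is correct and follows essentially the same route as the paper. For $C_x>0$, the paper rewrites the hypothesis as $(1-q)\log\frac{1}{1-q}\sum_{n}q^n\P_x[F_n]\to\pi C_x$ and invokes a discrete Tauberian theorem for monotone sequences (Proposition 12.5.3 of Lawler--Limic); your Karamata-plus-monotone-density argument proves exactly that statement, and you are right that no $\lambda\downarrow 1$ limit is needed because the index is $1$. For $C_x=0$, the paper uses the same elementary bound $\P_x[F_{T}]\ge(1-e^{-1})\P_x[F_m]$, with $q_m=e^{-1/m}$ in place of your $q=1-1/n$.
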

\begin{proof}
We first consider the case $C_x>0$. We have
\begin{align*}
  \sum_{n=0}^\infty q^n \P_x[F_n]&=\P_x[F_0]+\frac{q}{1-q}\sum_{n=1}^\infty (1-q)q^{n-1}\P_x[F_n]\\
  &=\P_x[F_0]+\frac{q}{G_0(q)(1-q)}\cdot G_0(q)\P_x[F_T].
  \stepcounter{equation}\tag{\theequation}
\end{align*}
By (\ref{G-log(1-q)}), we have
\begin{align}
  \lim_{q\to 1-}(1-q)\log \left(\frac{1}{1-q}\right)\sum_{n=0}^\infty q^n \P_x[F_n]=\pi C_x.
\end{align}
Thus, the discrete Tauberian theorem (see, e.g., Proposition 12.5.3 of \cite{Lawler-Limic}) yields (\ref{LemC.1-eq1}). 

Next, suppose that $C_x=0$. Let $q_m:=e^{-\frac{1}{m}}$. We have
\begin{align*}
  \P_x[F_{T_{1-q_m}}]&\ge \sum_{k=1}^m (1-q_m)q_m^{k-1}\P_x[F_k]\\
  &\ge \P_x[F_m](1-q_m)\sum_{k=1}^mq_m^{k-1}\\
  &=\Big(1-e^{-1}\Big)\P_x[F_m].
  \stepcounter{equation}\tag{\theequation}
\end{align*}
By (\ref{G-log(1-q)}), we have
\begin{align}
  G_0(q_m)\sim \frac{1}{\pi}\log \frac{1}{1-e^{-\frac{1}{m}}}\sim \frac{\log m}{\pi}\qquad \text{as}\ m\to \infty.
\end{align}
Thus, we have
\begin{align}
  0\le \frac{1}{\pi}\log m\cdot \P_x[F_m]\le \frac{\log m}{\pi G_0(q_m)}\cdot \frac{G_0(q_m)\P_x[F_{T_{1-q_m}}]}{1-e^{-1}}\to 0\qquad \text{as}\ m\to \infty.
\end{align}
This proves (\ref{LemC.1-eq1}) and completes the proof.
\end{proof}

When $f$ is nonincreasing, this lemma allows us to derive the corresponding constant-clock penalization limits from the geometric-clock limits obtained in the preceding sections.

\begin{thm}
Assume that $f$ satisfies (\ref{assumption-f}) and is nonincreasing; that is,
\begin{align}
  f(k)\ge f(k+1)\qquad \text{for}\ k\ge 0.
\end{align}
Then the penalization limits (\ref{penal-limit}), (\ref{penal-limit-2}), and (\ref{Local-penal}) also hold for the constant clock, that is,
\begin{align}
\label{PropC.2-eq1}
   \lim_{m\to \infty}\frac{\P_x[1_{\Lambda_n}\cdot f(L_{m}^{A\circlearrowleft})]}{\P_x[f(L_{m}^{A\circlearrowleft})]}&=\P_x\left[1_{\Lambda_n}\cdot \frac{h_f^A(S_n,L_n^{A\circlearrowleft},V_n)}{h_f^{A}(x,0,0)}\right],\\
   \label{PropC.2-eq2}
   \lim_{m\to \infty}\frac{\P_x[1_{\Lambda_n}\cdot f(N_{m}^{B})]}{\P_x[f(N_{m}^{B})]}&=\P_x\left[1_{\Lambda_n}\cdot \frac{H_f^B(S_n,N_n^B)}{H_f^B(x,0)}\right],\\
   \label{PropC.2-eq3}
 \lim_{m\to \infty}\frac{\P_x[1_{\Lambda_n}\cdot f(L_{m})]}{\P_x[f(L_{m})]}&=\P_x\left[1_{\Lambda_n}\cdot \frac{h_f(S_n,L_n)}{h_f(x,0)}\right],
 \end{align}
for every $n\ge 0$ and $\Lambda_n\in \F_n$. Equation (\ref{PropC.2-eq2}) holds for $x\in\Z^2$ such that $H_f^B(x,0)>0$, whereas the other two identities hold for all $x\in\Z^2$.
\end{thm}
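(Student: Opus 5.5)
The plan is to convert the geometric-clock limits into constant-clock limits using Lemma \ref{lemC.1}. The lemma will be applied twice for each of the three functionals $X\in\{L^{A\circlearrowleft},N^B,L\}$: once to the denominator $F_m:=f(X_m)$, and once to the numerator $F_m:=1_{\Lambda_n}f(X_m)$, restricted to $m\ge n$. In each case the hypothesis on the geometric clock is already available. For the denominator it is (\ref{f(L_T^A)-limit-zero-h_f^A}) (after dividing by $p_A$), Proposition \ref{3.6}, or (\ref{thmA.2-main}). For the numerator it is (\ref{thm2.5-eq2}), or the analogous numerator limits established in the proofs of Proposition \ref{thm3.7} and Theorem \ref{thmA.2}.

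The monotonicity hypothesis comes directly from the assumptions on $f$. Each of $L^{A\circlearrowleft}$, $N^B$ and $L$ is nondecreasing in time and integer valued, and $f$ is nonincreasing, so $f(X_{m+1})\le f(X_m)$ pointwise. Hence $1_{\Lambda_n}f(X_{m+1})\le 1_{\Lambda_n}f(X_m)$ pointwise, and in particular the expectations are nonincreasing in $m$. Integrability is also immediate: $f$ is summable and therefore bounded.

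Lemma \ref{lemC.1} is stated with $T$ ranging over all of $\Z_{\ge1}$, while the numerator sequence is only useful for $m\ge n$. I will handle this as follows. Define $F_m:=1_{\Lambda_n}f(X_{m\vee n})$, which is still nonincreasing in $m$. Then $G_0(q)\P_x[F_T]$ differs from $G_0(q)\P_x[1_{\Lambda_n}f(X_T)]$ only on the event $\{T\le n\}$. That event contributes at most $\|f\|_\infty(1-q^n)G_0(q)\le n\|f\|_\infty(1-q)G_0(q)\to0$ by (\ref{G_0(q)sim}), exactly as in the proof of Proposition \ref{thm2.5}. Therefore $F_m$ has the same geometric-clock limit $C_x$, namely $\P_x[1_{\Lambda_n}h_f^A(S_n,L_n^{A\circlearrowleft},V_n)]/p_A$ or its analogue for the other two functionals.

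Lemma \ref{lemC.1} then gives
\begin{align*}
\frac{\log m}{\pi}\,\P_x[1_{\Lambda_n}f(X_m)]\to C_x^{\mathrm{num}}\qquad\text{and}\qquad \frac{\log m}{\pi}\,\P_x[f(X_m)]\to C_x^{\mathrm{den}}.
\end{align*}
The denominator limit $C_x^{\mathrm{den}}$ is strictly positive: it is $h_f^A(x,0,0)/p_A$ for the first functional, $H_f^B(x,0)$ (positive by assumption, using Proposition \ref{lem3.8}) for the second, and $h_f(x,0)$ for the third. Taking the ratio yields (\ref{PropC.2-eq1})--(\ref{PropC.2-eq3}). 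The case $C_x=0$ in the numerator is already covered by the lemma, so no separate argument is needed.

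Because Lemma \ref{lemC.1} and the geometric limits do the real work, the only delicate point I expect is the $\{T\le n\}$ truncation above, and that is routine.
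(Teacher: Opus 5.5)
Your proposal is correct and follows the paper's proof. As there, you apply Lemma \ref{lemC.1} separately to the numerator and denominator sequences, feeding in the geometric-clock limits (\ref{lem2.2-eq3}) and (\ref{thm2.5-eq2}) and their analogues for $N^B$ and $L$, with monotonicity coming from $f$ nonincreasing and the counting processes nondecreasing. Your truncation to $1_{\Lambda_n}f(X_{m\vee n})$ is harmless but unnecessary: $1_{\Lambda_n}f(X_m)$ is already nonincreasing in $m$ for all $m\ge 0$, and (\ref{thm2.5-eq2}) is stated for the full geometric clock $T$, including the event $\{T\le n\}$.
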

\begin{proof}
By (\ref{lem2.2-eq3}), the sequence $F_m:=p_Af(L_m^{A\circlearrowleft})$, $m\ge 0$, satisfies the assumptions of {Lemma} \ref{lemC.1}. Hence,
\begin{align}
\label{propC2-exA}
  \lim_{m\to \infty}\frac{p_A}{\pi}\log m\cdot \P_x\Big[f(L_m^{A\circlearrowleft})\Big]=h_f^A(x,0,0).
\end{align}
Similarly, by (\ref{thm2.5-eq2}), the sequence $F_m=p_A1_{\Lambda_n}f(L_m^{A\circlearrowleft}),\ m\ge 0$, also satisfies the assumptions of Lemma \ref{lemC.1}. Hence,
\begin{align}
\label{propC2-exB}
  \lim_{m\to \infty}\frac{p_A}{\pi}\log m\cdot \P_x\Big[1_{\Lambda_n}\cdot f(L_{m}^{A\circlearrowleft})\Big]=\P_x\Big[1_{\Lambda_n}\cdot h_f^A(S_n,L_n^{A\circlearrowleft},V_n)\Big].
\end{align}
Combining (\ref{propC2-exA}) and (\ref{propC2-exB}), we obtain (\ref{PropC.2-eq1}). The limits (\ref{PropC.2-eq2}) and (\ref{PropC.2-eq3}) follow by the same argument.
\end{proof}

The same conclusion holds when $f$ has finite support, without any monotonicity assumption.

\begin{cor}
Assume that $f$ satisfies (\ref{assumption-f}) and has finite support; that is, there exists $M\in\Z_{\ge 0}$ such that $f(k)=0$ for all $k>M$. Then the penalization limits (\ref{PropC.2-eq1}), (\ref{PropC.2-eq2}), and (\ref{PropC.2-eq3}) hold.
\end{cor}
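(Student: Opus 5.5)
The natural route is to reduce the finite-support case to Lemma \ref{lemC.1}. That lemma needs the sequence $m\mapsto \P_x[F_m]$ to be nonincreasing. The key observation is that, for each of the three counting processes $C\in\{L^{A\circlearrowleft},N^B,L\}$, the process $C_m$ is nondecreasing in $m$. Hence, for every $k$, the tail indicator $1_{\{C_m\ge k\}}$ is nondecreasing, and the complementary indicator $1_{\{C_m\le k\}}$ is nonincreasing. The plan is to write $f$ as a finite signed combination of such monotone weights and apply the previous theorem, or Lemma \ref{lemC.1} directly, to each piece.

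\textbf{Decomposition.} Since $f(k)=0$ for $k>M$, I set $c_k:=f(k)-f(k+1)$ for $0\le k\le M$, with $f(M+1)=0$. Then
\begin{align*}
f(l)=\sum_{k=0}^M c_k\,1_{\{l\le k\}}\qquad\text{for}\ l\in\Z_{\ge 0}.
\end{align*}
Each $g_k(l):=1_{\{l\le k\}}$ is nonincreasing, and it satisfies (\ref{assumption-f}) because $\sum_l g_k(l)=k+1\in(0,\infty)$.

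\textbf{Limits for each piece.} For each $g_k$, the sequences $F_m:=g_k(C_m)$ and $F_m:=1_{\Lambda_n}g_k(C_m)$ are nonnegative, and their expectations are nonincreasing in $m$. The geometric-clock limits are already established: (\ref{lem2.2-eq3}) and (\ref{thm2.5-eq2}) for $L^{A\circlearrowleft}$, the argument of Proposition \ref{3.6} and Proposition \ref{thm3.7} for $N^B$, and Theorem \ref{thmA.2} for $L$. Lemma \ref{lemC.1} then gives
\begin{align*}
\lim_{m\to\infty}\frac{1}{\pi}\log m\cdot\P_x\big[1_{\Lambda_n}g_k(C_m)\big]=\P_x\big[1_{\Lambda_n}\,h_{g_k}(\cdots)\big],
\end{align*}
with the appropriate $h$-function and the normalizing constant $p_A$ inserted in the excursion case. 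Strictly speaking, for $N^B$ the geometric-clock limit for $1_{\Lambda_n}$ was stated only as a ratio. The unnormalized version follows from the same Markov-property computation as in (\ref{thm2.5-eq2}), since $H^B_{g}$ is defined for every nonnegative summable $g$.

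\textbf{Recombining.} The maps $f\mapsto h_f^A$, $f\mapsto H_f^B$ and $f\mapsto h_f$ are linear in $f$, as their defining formulas (\ref{Def-h_f^A}), (\ref{def-HfB}) and the definition of $h_f$ show. Summing the finitely many limits with coefficients $c_k$ therefore gives
\begin{align*}
\lim_{m\to\infty}\tfrac{1}{\pi}\log m\cdot\P_x\big[1_{\Lambda_n}f(C_m)\big]=\P_x\big[1_{\Lambda_n}h_f(\cdots)\big].
\end{align*}
It is harmless that some $c_k$ are negative, because only finitely many exact limits are being added. The same identity with $\Lambda_n=\Omega$ gives the limit of the denominator, namely $h_f^A(x,0,0)/p_A$, $H_f^B(x,0)$, or $h_f(x,0)$. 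These are strictly positive: by Theorem \ref{thm1.1} and positivity of $h_f$ in the first and third cases, and by assumption in the $N^B$ case. Dividing the two limits yields (\ref{PropC.2-eq1})--(\ref{PropC.2-eq3}).

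\textbf{Main obstacle.} The only delicate point is the applicability of Lemma \ref{lemC.1}: its monotonicity hypothesis fails for a general finitely supported $f$. The proof gets around this by applying the lemma to the nonincreasing pieces $g_k$ rather than to $f$ itself. One then checks that the lemma's conclusion, an exact limit that may be zero, is linear and so survives signed finite combinations. The lemma's case $C_x=0$ covers the pieces whose limits vanish, which can occur when $B=B_\ast$.
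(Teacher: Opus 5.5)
Your proof is correct and is essentially the paper's argument. It uses the same decomposition $f=\sum_{j=0}^M (f(j)-f(j+1))\,1_{\{\cdot\le j\}}$ into nonincreasing pieces, applies Lemma \ref{lemC.1} to each piece, and recombines by linearity of $f\mapsto h_f^A$, $H_f^B$, $h_f$; you are in fact more explicit than the paper about the numerator with $1_{\Lambda_n}$, the unnormalized geometric-clock limit for $N^B$, and the positivity of the denominators.
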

\begin{proof}
 For $j\ge 0$, let $g_j(k):=1_{\{k\le j\}}$. Then we have
 \begin{align}
 \label{corC.3-eq1}
   f(k)=\sum_{j=0}^M \Big(f(j)-f(j+1)\Big)g_j(k)\qquad \text{for}\ k\ge 0.
 \end{align}
Since each $g_j$ is nonincreasing, (\ref{propC2-exA}) holds with $f$ replaced by $g_j$. Using (\ref{corC.3-eq1}) and the linearity of the map $f\mapsto h_f^A$, we obtain
\begin{align*}
\lim_{m\to \infty}\frac{p_A}{\pi}\log m\cdot \P_x\Big[f(L_{m}^{A\circlearrowleft})\Big]&=\sum_{j=0}^M \Big(f(j)-f(j+1)\Big)\lim_{m\to \infty}\frac{p_A}{\pi}\log m\cdot \P_x\Big[g_j(L_{m}^{A\circlearrowleft})\Big]\\
&=\sum_{j=0}^M \Big(f(j)-f(j+1)\Big)h_{g_j}^A(x,0,0)=h_f^A(x,0,0).
 \stepcounter{equation}\tag{\theequation}
\end{align*}
The remaining cases can be proved in the same manner. This completes the proof.
\end{proof}


\section{Explicit Values of the Constants}
\label{AppendixB}
In this appendix, we compute the explicit values of the constants appearing throughout the paper.

For $i,j\in \{1,2,3,4\}$, let $p_{i,j}$ denote the conditional probability that the walk returns to the origin from $v_j$, given that its first step is from the origin to $v_i$:
\begin{align}
p_{i,j}:=\P_0\Big(S_{\tau-1}=v_j\mid S_1=v_i\Big).
\end{align}
The following lemma gives the explicit values of $p_{i,j}$.

\begin{lem}
For $i,j\in \{1,2,3,4\}$, we have
\begin{align}
\label{lemA.1-eq1}
  p_{i,j}=\begin{cases}
    \displaystyle \frac{1}{2}&\text{if}\ v_i=v_j,\\[10pt]
    \displaystyle \frac{1}{2}-\frac{1}{\pi}&\text{if}\ v_i\cdot v_j=0,\\[10pt]
    \displaystyle \frac{2}{\pi}-\frac{1}{2}& \text{if}\ v_i=-v_j.
  \end{cases}
\end{align}
\end{lem}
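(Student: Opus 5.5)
The plan is to reduce everything to values of the potential kernel by means of the last-exit formula already quoted in the paper, namely
\begin{align*}
\P_x\Big(S_{\tau-1}=v_j\Big)=\frac{1}{4}\Big(a(x)+1-a(x-v_j)\Big)\qquad \text{for}\ x\neq 0,
\end{align*}
which is the identity (\ref{pij-identity}) invoked in the expression for $r^A$.

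For completeness I will first justify this formula. Decompose according to the value of $\tau=n$. On $\{\tau=n\}$ the walk sits at $v_j$ at time $n-1$ without having visited $0$ at times $1,\dots,n-1$, and then makes the step $v_j\to 0$, which has probability $1/4$. Summing over $n$ gives
\begin{align*}
\P_x\Big(S_{\tau-1}=v_j\Big)=\frac{1}{4}\,G_{\Z^2\setminus\{0\}}(x,v_j),
\end{align*}
where $G_{\Z^2\setminus\{0\}}$ is the Green's function of the walk killed at the origin. The standard identity $G_{\Z^2\setminus\{0\}}(x,y)=a(x)+a(y)-a(x-y)$ (Proposition 4.6.3 of \cite{Lawler-Limic}) then yields the formula, because $a(v_j)=1$ by (\ref{avi=1}).

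Next, by the Markov property at time $1$, $p_{i,j}=\P_{v_i}(S_{\tau-1}=v_j)$. Applying the formula with $x=v_i$ gives
\begin{align*}
p_{i,j}=\frac{1}{4}\Big(2-a(v_i-v_j)\Big).
\end{align*}
It remains to evaluate $a$ at the three possible differences.
\begin{itemize}
\item If $v_i=v_j$, then $a(0)=0$, so $p_{i,j}=1/2$.
\item If $v_i\cdot v_j=0$, then $v_i-v_j$ is a diagonal point $(\pm1,\pm1)$. The classical value $a((1,1))=4/\pi$ from Spitzer, or Lawler--Limic, Section 4.4, gives $p_{i,j}=1/2-1/\pi$.
\item If $v_i=-v_j$, then $v_i-v_j=\pm 2v_i$. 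To find $a((2,0))$ I will use harmonicity of $a$ away from the origin, applied at $v_1$. The neighbours of $v_1$ are $(2,0)$, $0$, $(1,1)$ and $(1,-1)$, so
\begin{align*}
1=a(v_1)=\frac{1}{4}\Big(a((2,0))+0+\frac{8}{\pi}\Big),
\end{align*}
hence $a((2,0))=4-8/\pi$. This gives $p_{i,j}=2/\pi-1/2$.
\end{itemize}
By lattice symmetry, $a$ takes the same value at all points of each of these orbits.

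The only genuinely external inputs are the killed-Green's-function identity and the value $a((1,1))=4/\pi$. The latter is the main obstacle if it is to be derived rather than cited; I would quote it from \cite{Lawler-Limic}, where it is obtained from the Fourier integral for $a$. As a consistency check, the three values satisfy $\sum_j p_{i,j}=\frac12+2\left(\frac12-\frac1\pi\right)+\frac2\pi-\frac12=1$, as they must by recurrence.
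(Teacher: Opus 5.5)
Your proposal is correct and follows the paper's proof essentially step for step. Both use the Markov property at time $1$ and the killed Green's function identity $G_{\Z^2\setminus\{0\}}(x,y)=a(x)+a(y)-a(x-y)$ to get $p_{i,j}=\frac14\bigl(2-a(v_i-v_j)\bigr)$, then the cited value $a((1,1))=4/\pi$ and harmonicity of $a$ at $v_1$ to get $a((2,0))=4-8/\pi$. The only difference is that the paper derives $a(v_i)=1$ and the lattice symmetries of $a$ from the Fourier integral inside this proof, whereas you take them as known; this is harmless, since they also follow from the fact that the discrete Laplacian of $a$ is $\delta_0$.
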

\begin{proof}
By the Markov property at time $1$, we have
\begin{align*}
\label{pij-identity}
  p_{i,j}&=\P_{v_i}(S_{\tau-1}=v_j)=\frac{1}{4}\P_{v_i}\left[\sum_{n=0}^{\tau-1}1_{\{S_n=v_j\}}\right]=\frac{1}{4}\Big(a(v_j)-a(v_j-v_i)+a(v_i)\Big).
  \stepcounter{equation}\tag{\theequation}
\end{align*}
In the last equality, we used the representation of the Green function of the walk killed at the origin in terms of the potential kernel (see, e.g., Proposition 4.6.3 of \cite{Lawler-Limic}). It is known that
\begin{align}
  a(x)=\frac{1}{(2\pi)^2}\int_{[-\pi,\pi]^2}\frac{1-\cos(x_1\theta_1+x_2\theta_2)}{1-\frac{1}{2}(\cos\theta_1+\cos\theta_2)}\,d\theta_1d\theta_2\qquad \text{for}\ x=(x_1,x_2)\in \Z^2
\end{align}
(see, e.g., Proposition 4.4.3 of \cite{Lawler-Limic}). This representation shows that $a(0)=0$ and
\begin{align}
a((x_1,x_2))=a((-x_1,x_2))=a((x_1,-x_2))=a((-x_1,-x_2))=a((x_2,x_1)).
\end{align}
Thus, we have
\begin{align*}
\label{avi=1}
  a(v_i)&=\frac{1}{2}\Big(a(v_1)+a(v_2)\Big)\\
&=\frac{1}{2}\cdot\frac{1}{(2\pi)^2}\int_{[-\pi,\pi]^2}\frac{(1-\cos\theta_1)+(1-\cos\theta_2)}{1-\frac{1}{2}(\cos\theta_1+\cos\theta_2)}\,d\theta_1d\theta_2\\
&=\frac{1}{(2\pi)^2}\int_{[-\pi,\pi]^2}\,d\theta_1d\theta_2\\
&=1
\stepcounter{equation}\tag{\theequation}
\end{align*}
for $i=1,...,4$. As shown on p. 89 of \cite{Lawler-Limic},
\begin{align}
\label{a11}
a((1,1))=a((1,-1))=a((-1,1))=a((-1,-1))=\frac{4}{\pi}.
\end{align}
Moreover, by Proposition 4.4.2 of \cite{Lawler-Limic}, we have
\begin{align*}
\label{lemA.1-eq2}
  a((2,0))&=4a((1,0))-a((0,0))-a((1,1))-a((1,-1))=4-\frac{8}{\pi}.
  \stepcounter{equation}\tag{\theequation}
\end{align*}
Combining (\ref{pij-identity}) with (\ref{avi=1})-(\ref{lemA.1-eq2}), we obtain (\ref{lemA.1-eq1}). This completes the proof.
\end{proof}

Since
\begin{align*}
  p_A&=\P_0\Big((I_1,J_1)\in A\Big)=\sum_{(i,j)\in A}\P_0\Big(S_1=v_i,\ S_{\tau-1}=v_j\Big)=\frac{1}{4}\sum_{(i,j)\in A}p_{i,j},
\stepcounter{equation}\tag{\theequation}
\end{align*}
we can compute $p_A$ explicitly using (\ref{lemA.1-eq1}). 

Next, we determine the explicit value of $\kappa_B$. By (\ref{lem3.3-eq1}), (\ref{a11}), and (\ref{lemA.1-eq2}), the explicit values of $\Sigma_{B,B}^a$, $\kappa_B,$ and $\lambda^B_-$ can be computed for each configuration of the edges belonging to $B$. The results are summarized in the following table.

  \begin{table}[!h]
 \begin{center}
  \begin{tabular}{c|ccc} \Xhline{1.5pt}
    Edge configuration of $B$ & $\Sigma_{B,B}^a$ & $\kappa_B$ & $\lambda^B_-$ \\ \hline
   $1$ edge & $0$ & $3/8$ & $-1/3$ \\
   $2$ adjacent edges & $8/\pi$ & $1/2+1/(2\pi)$ & $(1-\pi)/(1+\pi)$ \\
   $2$ opposite edges & $8-16/\pi$ & $1-1/\pi$ & $1/(1-\pi)$\\
   $3$ edges & $8$ & $7/8$ & $-5/7$ \\
   $4$ edges & $16$ & $1$ & $-1$\\ \Xhline{1.5pt}
  \end{tabular}
     \caption{Explicit values of the constants related to $N^B$}
     \label{table}
 \end{center}
\end{table}


\section*{Acknowledgments}
The first author acknowledges support from JSPS KAKENHI Grant Numbers 23K03158 and 25K24482. The second author acknowledges support from JSPS KAKENHI Grant Number 26KJ1605.

\addcontentsline{toc}{section}{References}
\bibliographystyle{plain}

\end{document}